\newtheorem{theorem}{Theorem}[section]
\newtheorem{proposition}[theorem]{Proposition}
\newtheorem{corollary}[theorem]{Corollary}
\newtheorem{lemma}[theorem]{Lemma}
\newtheorem{question}[theorem]{Question}
\theoremstyle{definition}
\newtheorem{definition}[theorem]{Definition}
\newcommand\R{\mathbb R}
\newcommand\Rp{\mathbb{R}^+}
\newcommand\N{\mathbb N}
\newcommand\No{{\mathbb{N}_0}}
\newcommand\C{\mathbb C}
\newcommand{\abs}[1]{\lvert #1\rvert}
\newcommand\dist{\mathrm{dist}}
\newcommand\ob[2]{{\rm B}(#1,#2)}
\newcommand\cb[2]{\bar{\rm B}(#1,#2)}
\newcommand\ad{\delta_{\rm ann}}
\newcommand\ar{\rho_{\rm ann}}
\newcommand\ec[1]{\varepsilon_{#1}}
\newcommand\scs{\mathcal F}
\newcommand\noninc{\mathcal N}
\newcommand\La[2][{A}]{\mathcal{L}_{#1}(\mathcal{#2})}
\newcommand\LM[2][{A}]{\mathcal{L}_{#1}^*(\mathcal{#2})}
\newcommand\An{\mathcal A}
\newcommand\Sw[1][{A}]{\mathcal S(#1)}
\DeclareMathOperator{\tint}{int}
\newcommand{\sskip}{\vskip 0.2cm}
\begin{document}

\title{Abstract Swiss Cheese Space and Classicalisation of Swiss Cheeses}
\author{%
J. F. Feinstein
\and%
S. Morley\footnote{This author is supported by a grant from the EPSRC}
\and%
H. Yang\footnote{This author is supported by a China Tuition Fee Research Scholarship and the School of Mathematical Sciences at the University of Nottingham}
}

\maketitle
\renewcommand{\thefootnote}{}

\footnote{2010 \emph{Mathematics Subject Classification}: Primary 46J10; Secondary 54H99.}

\footnote{\emph{Key words and phrases}: Swiss cheeses, rational approximation, uniform algebras, bounded point derivations, regularity of $R(X)$.}

\renewcommand{\thefootnote}{\arabic{footnote}}
\setcounter{footnote}{0}

\begin{abstract}
  Swiss cheese sets are compact subsets of the complex plane obtained by deleting a sequence of open disks from a closed disk. Such sets have provided numerous counterexamples in the theory of uniform algebras. In this paper, we introduce a topological space whose elements are what we call ``abstract Swiss cheeses''. Working within this topological space, we show how to prove the existence of ``classical'' Swiss cheese sets (as discussed in \cite{feinheath2010}) with various desired properties.

  We first give a new proof of the Feinstein-Heath classicalisation theorem (\cite{feinheath2010}). We then consider when it is possible to ``classicalise'' a Swiss cheese while leaving disks which lie outside a given region unchanged. We also consider sets obtained by deleting a sequence of open disks from a closed annulus, and we obtain an analogue of the Feinstein-Heath theorem for these sets. We then discuss regularity for certain uniform algebras. We conclude with an application of these techniques to obtain a classical Swiss cheese set which has the same properties as a non-classical example of O'Farrell \cite{o1979regular}.
\end{abstract}





\section{Introduction}
Throughout, we use the term {\em compact plane set} to mean a non-empty, compact subset of the complex plane. Let $X$ be a compact plane set. Then $C(X)$ denotes the set of all continuous, complex-valued functions on $X$, and $R(X)$ denotes the set of those functions $f\in C(X)$ which can be uniformly approximated on $X$ by rational functions with no poles on $X$. Both $R(X)$ and $C(X)$ are uniform algebras on $X$. We refer the reader to \cite{browder1969,dales2000,gamelin1984} and \cite{stout1971} for further definitions and background concerning uniform algebras and Banach algebras.

A Swiss cheese set is a compact subset of $\C$ obtained by deleting a sequence of open disks from a closed disk. Such sets have been used as examples in the theory of uniform algebras and rational approximation. Swiss cheese sets were introduced by Roth \cite{roth1938}, where she gave the first known example of a compact plane set $X$ such that $R(X)\neq C(X)$ but $X$ has empty interior. Since then there have been numerous applications of Swiss cheese sets in the literature.

One notable example of a Swiss cheese construction is due to McKissick \cite{mckissick1963nontrivial}. He gave an example of a Swiss cheese set $X$ such that $R(X)$ is regular but $R(X)\neq C(X)$. (We will define regularity in Section \ref{regularitysection}.) The sequence of open disks used to construct this Swiss cheese set may touch or overlap, which means that the set $X$ might have undesirable topological properties. To improve the topological properties of the resulting Swiss cheese set, while preserving the properties of the uniform algebra, a process that we call {\em classicalisation} was developed (\cite{feinheath2010}).

We may consider a pair consisting of a closed disk and a collection of open disks in the plane, from which we obtain the desired Swiss cheese set (see Definition \ref{swisscheedef} below). We call such a pair a {\em Swiss cheese} and say it is {\em classical} if the collection of open disks and the complement of the closed disk have pairwise disjoint closures and the sum of the radii of all open disks is finite. Note that, in the literature, the term `Swiss cheese' traditionally refers to what we call a Swiss cheese set. Feinstein and Heath \cite{feinheath2010} considered Swiss cheeses in which the sum of the radii of the open disks is strictly less than the radius of the larger, closed disk. They proved, using Zorn's lemma, that for such a Swiss cheese, the associated Swiss cheese set contains a Swiss cheese set associated to a classical Swiss cheese. Later, Mason \cite{mason2010} gave a proof of this theorem using transfinite induction.

Classical Swiss cheese sets have many desirable topological properties. For example, Dales and Feinstein \cite{dalefein2010} proved that given two points $x,y$ in a classical Swiss cheese set there is a rectifiable path connecting $x,y$ and such that the length of this path is no more than $\pi\abs{x-y}$; in fact, the constant $\pi$ can be replaced by $\pi/2$ here. After this observation it is easy to see that a classical Swiss cheese set is path connected (and hence connected), locally path connected (and hence locally connected), and uniformly regular, as defined in \cite{dalefein2010}.  Also as a consequence of connectedness, we see that a classical Swiss cheese set cannot have any isolated points. In \cite{feinheath2010} it was noted that every classical Swiss cheese set with empty interior is homeomorphic to the Sierpi\'nski carpet as a consequence of a theorem of Whyburn \cite{whyburn1958}.

Browder \cite{browder1969} notes that if $X$ is a classical Swiss cheese set then $R(X)$ is essential (see also \cite{feinheath2010}). In particular, $R(X)\neq C(X)$, as originally proved by Roth \cite{roth1938}. It follows from the Hartogs-Rosenthal theorem that $X$ must have positive area. A direct proof that every classical Swiss cheese set has positive area is due to Allard, as outlined in \cite[pp.~163-164]{browder1969}.

Where existing examples of Swiss cheese sets in the literature are not classical, it is of interest to construct classical Swiss cheese sets which solve the same problems. As part of a general classicalisation scheme, we discuss some new techniques for constructing such classical Swiss cheese sets.

In this paper we consider what we call {\em abstract Swiss cheeses}, which are sequences of pairs consisting of a complex number and a non-negative real number. Each pair in this sequence corresponds to a centre and radius of a disk in the plane. We give the set of all abstract Swiss cheeses a natural topology and use this topology to give a new proof of the Feinstein-Heath theorem. We show that, under some conditions, we can classicalise Swiss cheese sets while only changing open disks which lie in certain regions. We prove an analogue of the Feinstein-Heath theorem for annuli. We give some results regarding regularity of $R(X)$ for unions of compact plane sets, which will be used in the final section. Finally, we give an example of the application of a combination of these results to construct an example of a classical Swiss cheese set $X$ such that $R(X)$ is regular and admits a non-degenerate bounded point derivation of infinite order (as defined in Section \ref{applicationoflocal}), which improves an example of O'Farrell \cite{o1979regular}. This fits into our general classicalisation scheme.

\section{Swiss cheeses and abstract Swiss cheese space}
We denote the set of all non-negative real numbers by $\Rp,$ the set of positive integers by $\N$ and the set of all non-negative integers by $\No$. Let $a\in\C$ and let $r>0$. We denote the open disk of radius $r$ and centre $a$ by $\ob ar$ and the corresponding closed disk by $\cb ar$. We also set $\cb{a}{0}=\{a\}$ and $\ob{a}{0}=\emptyset$. We say a disk with radius zero is {\em degenerate}. For a non-degenerate open or closed disk $D$ in the plane, let $r(D)$ denote the radius of $D$; for a degenerate disk $D$ we define $r(D)=0$. The following is the definition of a Swiss cheese used in \cite{feinheath2010}.

\begin{definition}\label{swisscheedef}Let $\Delta\subseteq\C$ be a non-degenerate open disk and let $\mathcal D$ be a countable collection of non-degenerate$,$ open disks in the plane. Then the ordered pair $E=(\overline\Delta,\mathcal D)$ is a {\em Swiss cheese}. We also define the following.
\begin{enumerate}
 \item The {\em Swiss cheese set $X_E$ associated with the Swiss cheese $E$} is defined by
 \begin{equation}\label{Scset}
 X_E=\overline\Delta\setminus\bigcup\limits_{D\in\mathcal D}D.
 \end{equation}
 \item The {\em discrepancy} $\delta(E)$ of $E$ is defined by
 \[
 \delta(E)=r(\overline\Delta)-\sum_{D\in\mathcal D}r(D).
 \]
 \item The Swiss cheese $E$ is {\em semiclassical} if $\delta(E)>-\infty$, for each $D\in\mathcal D$ we have $D\subseteq\overline\Delta$, and for each $D'\in\mathcal D$ with $D\neq D'$ we have $D\cap D'=\emptyset$. In this case we say the Swiss cheese set associated to $E$ is {\em semiclassical}.
 \item The Swiss cheese $E$ is {\em classical} if $\delta(E)>-\infty$, for each $D\in\mathcal D$ we have $\overline D\subseteq\Delta$, and for each $D'\in\mathcal D$ with $D\neq D'$ we have $\overline D\cap \overline{D'}=\emptyset$. In this case we say the Swiss cheese set associated to $E$ is {\em classical}.
 \item The Swiss cheese $E$ is {\em finite} if the collection $\mathcal D$ is finite and infinite otherwise.
\end{enumerate}
\end{definition}

The condition $\delta(E)>-\infty$ is equivalent to the sum of the radii of the open disks being finite.

We note that without some condition on the disks in $\mathcal D$ we can obtain every compact plane set as a Swiss cheese set with this definition.
\sskip
Throughout this paper, we will work in what we call {\em abstract Swiss cheese space} $\scs$, where $\scs=(\C\times\Rp)^{\No}$ with the product topology.

\begin{definition}\label{absSCdef}
Let $A=((a_n,r_n))_{n=0}^\infty\in\scs$. We call $A$ an {\em abstract Swiss cheese}$,$ and we define the following.
\begin{enumerate}
 \item The {\em significant index set of $A$} is $S_A:=\{n\in\N:r_n>0\}$. We say that $A$ is {\em finite} if $S_A$ is a finite set, otherwise $A$ is {\em infinite}.
 \item The {\em associated Swiss cheese set} $X_A$ is defined by
 \begin{equation}\label{absScs}
 X_A=\cb{a_0}{r_0}\setminus\left( \bigcup\limits_{n=1}^\infty\ob{a_n}{r_n}\right).
 \end{equation}
 \item We say that $A$ is {\em semiclassical} if $\sum_{n=1}^\infty r_n<\infty,$ $r_0>0$ and for all $k\in S_A$ the following hold$:$
     \begin{enumerate}
        \item $\ob{a_k}{r_k}\subseteq\ob{a_0}{r_0};$
        \item whenever $\ell\in S_A$ has $\ell\neq k,$ we have $\ob{a_k}{r_k}\cap\ob{a_\ell}{r_\ell}=\emptyset$.
     \end{enumerate}
 \item We say that $A$ is {\em classical} if $\sum_{n=1}^\infty r_n<\infty,$ $r_0>0$ and for all $k\in S_A$ the following hold$:$
    \begin{enumerate}
        \item $\cb{a_k}{r_k}\subseteq\ob{a_0}{r_0};$
        \item whenever $\ell\in S_A$ with $\ell\neq k,$ we have $\cb{a_k}{r_k}\cap\cb{a_\ell}{r_\ell}=\emptyset$.
    \end{enumerate}
\end{enumerate}
For $\alpha\geq 1$ we define the {\em discrepancy function of order} $\alpha$ , $\delta_\alpha:\scs\to[-\infty,\infty)$ by
\begin{equation}\label{gendiscr}\delta_\alpha(A)=r_0^\alpha -\sum\limits_{n=1}^\infty r_n^\alpha \qquad (A=((a_n,r_n))_{n=0}^\infty\in \mathcal F).
\end{equation}
\end{definition}

Note that in \eqref{absScs} we could instead write \[X_A:=\cb{a_0}{r_0}\setminus\left( \bigcup\limits_{n\in S_A}\ob{a_n}{r_n}\right).\]
If $A$ is semiclassical or classical, then $\pi\delta_2(A)$ is the area of the Swiss cheese set $X_A$.
We will usually write $A=((a_n,r_n))$ for an abstract Swiss cheese. All sequences, unless otherwise specified, will be indexed by $\No$.

We also define the following functions on $\scs$.

\begin{definition}
The {\em radius sum function} is the map $\rho:\scs\to[0,\infty]$ defined by
\[
\rho(A)=\sum\limits_{n=1}^\infty r_n\qquad(A=((a_n,r_n))\in\scs).
\]
The {\em centre bound function} is the map $\mu:\scs\to[0,\infty]$ defined by
\[
\mu(A)=\sup_{n\in\N}{\abs{a_n}}\qquad(A=((a_n,r_n))\in\scs).
\]
Let $E\subseteq\C$. For an abstract Swiss Cheese $A=((a_n,r_n))$ we define $H_A(E)$ to be the set of those $n\in S_A$ such that $\cb{a_n}{r_n}\cap E\neq\emptyset$. The {\em local radius sum function on $E$} is the function $\rho_E:\scs\to[0,\infty]$ defined by
\[
\rho_E(A)=\sum\limits_{n\in H_A(E)}r_n\qquad(A=((a_n,r_n))\in\scs).
\]
\end{definition}

It is easy to see that $\rho$ and $\mu$ are both lower semicontinuous from $\scs$ to $[0,\infty]$. (For $\rho$, this is an easy consequence of Fatou's lemma for series.)
\sskip
We now explain the connection between Swiss cheeses, as in Definition \ref{swisscheedef}, and abstract Swiss cheeses. We construct a many-to-one surjection of a subset of $\scs$ onto the collection of all Swiss cheeses as in Definition \ref{swisscheedef}. Let $A=((a_n,r_n))$ be an abstract Swiss cheese with $r_0>0$. Then we can obtain an associated Swiss cheese $E_A$ by setting
\[
E_A:=(\cb{a_0}{r_0},\{\ob{a_n}{r_n}:n\in S_A\}).
\]
The associated Swiss cheese sets of $A$ and $E_A$ are equal, and $\delta(E_A)\geq\delta_1(A)$. Moreover, if $A$ is finite then $E_A$ is finite; if $A$ is semiclassical then $E_A$ is semiclassical; and if $A$ is classical then $E_A$ is classical. Conversely, if $E$ is a finite Swiss cheese then there is a finite abstract Swiss cheese $A$ such that $E_A=E$.

Let $E=(\overline\Delta,\mathcal D)$ be a Swiss cheese. If $E$ is (semi)classical then there is an abstract Swiss cheese $A$ with $E_A=E$ such that $A$ is (semi)classical. Moreover, when the sum of the radii of open disks in $\mathcal D$ is finite, we can find an abstract Swiss cheese $A=((a_n,r_n))$ with $\rho(A)<\infty$ and $E=E_A$ such that the sequence $(r_n)_{n=1}^\infty$ is non-increasing.

We denote the collection of all abstract Swiss cheeses $A=((a_n,r_n))$ with $\rho(A)<\infty$ and $(r_n)_{n=1}^\infty$ non-increasing by $\noninc$. In addition, for each $M>0$ and $R>0$, we denote the set of all those abstract Swiss cheeses $A=((a_n,r_n))\in\noninc$ such that $\mu(A)\leq M$ and $\rho(A)\leq R$ by $\noninc(M,R)$. {Let $M,R>0$. Although $\noninc(M,R)$ is not itself compact, a closed subset $S$ of $\noninc(M,R)$ is compact if and only if the $0$-th coordinate projection maps $S$ to a bounded subset of $\C\times \Rp$}. Note that, for $A=((a_n,r_n))\in\noninc(M,R)$ we have $r_n\leq R/n$ for all $n\in\N$.

\begin{lemma} \label{discrepancy_function}
Let $M,R>0$. For $\alpha\geq 1,$ the function $\delta_\alpha:\scs \to[-\infty,\infty)$ is upper semicontinuous. For $\alpha>1,$ the function $\delta_\alpha|_{\noninc(M,R)}:\noninc(M,R)\to \R$ is continuous.
\end{lemma}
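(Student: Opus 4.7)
The plan is to decompose $\delta_\alpha(A) = r_0^\alpha - \sum_{n=1}^\infty r_n^\alpha$ into a continuous leading term and an infinite non-negative tail, and then exploit the semicontinuity properties of the tail.

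For the first claim, I would first note that the map $A \mapsto r_0^\alpha$ is continuous on $\scs$, being the composition of the zeroth radius projection with $t \mapsto t^\alpha$ on $\Rp$. For each $N \in \N$, the partial sum $s_N(A) := \sum_{n=1}^N r_n^\alpha$ depends continuously on finitely many radius coordinates, so $s_N : \scs \to \Rp$ is continuous. Because the summands are non-negative, $\sum_{n=1}^\infty r_n^\alpha = \sup_N s_N$, and a pointwise supremum of continuous functions is lower semicontinuous (into $[0,\infty]$). A continuous function minus a lower semicontinuous $[0,\infty]$-valued function is upper semicontinuous into $[-\infty,\infty)$, which gives part one. (This is essentially the Fatou-type argument already invoked for $\rho$ in the text.)

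For the second claim, I would upgrade the lower semicontinuity of the tail to outright continuity on $\noninc(M,R)$ by showing uniform convergence of the series. The excerpt has already recorded the key bound $r_n \leq R/n$ for all $A = ((a_n, r_n)) \in \noninc(M,R)$ and all $n \in \N$, which follows from the non-increasing nature of $(r_n)_{n=1}^\infty$ together with $\rho(A) \leq R$. Raising to the $\alpha$-th power, $r_n^\alpha \leq R^\alpha / n^\alpha$ on $\noninc(M,R)$, and since $\alpha > 1$ the series $\sum_n n^{-\alpha}$ converges. The Weierstrass M-test then yields that $s_N \to \sum_{n \geq 1} r_n^\alpha$ uniformly on $\noninc(M,R)$. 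A uniform limit of continuous functions is continuous, so the tail restricts to a continuous real-valued function on $\noninc(M,R)$; combined with the continuity of $A \mapsto r_0^\alpha$, this proves $\delta_\alpha|_{\noninc(M,R)}$ is continuous.

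I expect no real obstacle: the crux of part two is just the uniform tail bound, which the authors have essentially provided. The only point requiring a little care is the codomain in part one, namely that $\delta_\alpha$ never takes the value $+\infty$ (because $r_0^\alpha$ is finite while the tail is non-negative) and that the argument for upper semicontinuity is correct when the subtracted function is allowed to attain $+\infty$, which it is.
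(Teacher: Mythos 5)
Your proof is correct. The first part is the paper's argument in different clothing: writing the tail $\sum_{n\geq 1}r_n^\alpha$ as the supremum of its continuous partial sums and using that a supremum of continuous functions is lower semicontinuous is precisely what the paper's appeal to Fatou's lemma for series amounts to, and your attention to the value $+\infty$ and to the codomain $[-\infty,\infty)$ is warranted and handled correctly. For the second part the paper instead fixes a convergent sequence $A^{(m)}\to A$ in $\noninc(M,R)$ and applies the dominated convergence theorem for series with the dominating bound $r_n^\alpha\leq R^\alpha/n^\alpha$; your Weierstrass M-test argument rests on exactly the same bound but upgrades it to uniform convergence of the partial sums on all of $\noninc(M,R)$. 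This buys a little: it yields continuity directly from the topology, without invoking the sequential characterisation of continuity (which is harmless here, since $\scs$ is a countable product of metric spaces and hence metrizable, but your route does not need that observation).
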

\begin{proof}
As for the lower semicontinuity of $\rho$, it is an easy consequence of Fatou's lemma for series that $\delta_\alpha:\scs\to[-\infty,\infty)$ is an upper semicontinuous function for each $\alpha\geq 1$.

Fix $\alpha>1$. For each $m\in\No$ let  $A^{(m)}=((a_n^{(m)},r_n^{(m)}))\in\noninc(M,R)$ and suppose $A^{(m)}\to A\in\noninc(M,R)$ as $m\to\infty$. We have $\abs{r_n^{(m)}}^{\alpha}\leq {R^{\alpha}}/{n^{\alpha}}$ for all $n\in\N$. Since $\sum_{n=1}^\infty {R^{\alpha}}/{n^{\alpha}}<\infty$, by the dominated convergence theorem, we have
\[
\delta_\alpha (A) = r_0^\alpha-\sum_{n=1}^\infty r_n^{\alpha} = \lim_{m\to \infty} \left( (r_0^{(m)})^{\alpha}- \sum_{n=1}^\infty(r_n^{(m)})^{\alpha}\right) = \lim_{m\to \infty} \delta_\alpha(A^{(m)}).
\]
So $\delta_\alpha$ is continuous from $\noninc(M,R)$ to $\R$.
\end{proof}

We remark that there are examples showing that $\delta_1$ is only upper semicontinuous, but not continuous.

\begin{definition} \label{disk_mani}
Let $A=((a_n,r_n))$ be an abstract Swiss cheese.
\begin{enumerate}
  \item Let $a\in\C$ and $r> 0$ and let $m\in \No$. We say an abstract Swiss cheese $B=((b_n,s_n))$ is obtained from $A$ by {\em inserting} a disk $\ob ar$ at index $m$ if, for $0\leq n<m$, we have $b_n=a_n,$ $s_n=r_n$; for $n>m$ we have $b_n=a_{n-1},$ $s_n=a_{n-1}$, and $b_m=a$, $s_m=r$.
  \item Let $m\in\No$. We say an abstract Swiss cheese $B=((b_n,s_n))$ is obtained from $A$ by {\em deleting} the disk at index $m$ if, for $0\leq n<m$, we have $b_n=a_n,$ $s_n=r_n$ and for all $n\geq m$ we have $b_n=a_{n+1},$ $s_n=r_{n+1}$.
  \item Suppose $A\in\noninc$. Let $a\in\C$ and $r>0$ and $k,\ell\in\N$ with $k\neq \ell$. We say an abstract Swiss cheese $B=((b_n,s_n))$ is obtained from $A$ by {\em replacing} the disks $\ob{a_k}{r_k},\ob{a_\ell}{r_\ell}$ by $\ob ar$ if $B$ is obtained by deleting the disks at indices $k,\ell$ and inserting the disk $\ob ar$ at the first index in $\mathbb N$ such that the sequence $(s_n)_{n=1}^\infty$ is non-increasing.
\end{enumerate}
\end{definition}

Note that, if $A\in\noninc$, then the abstract Swiss cheese $B$ obtained by deleting or replacing disks, as defined in Definition~\ref{disk_mani}, is also in $\noninc$.

\section{Some geometric results}
Throughout, we shall require the following elementary geometric lemmas. The first is probably well-known, and the proof is elementary.

\begin{lemma} \label{lem1}
Let $z,w\in \C$ and $r,s\in \Rp,$ then $\cb{z}{r}\subseteq \cb{w}{s}$ if and only if $\abs{z-w}\leq s-r$. If $r>0$, then $\ob{z}{r}\subseteq \mathbb{C}\setminus \cb{w}{s}$ if and only if $\abs{w-z}\geq s+r$.
\end{lemma}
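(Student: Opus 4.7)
The plan is to prove each biconditional by splitting into the two implications, using the triangle inequality in the easy direction and an explicit extremal point in the other. Throughout I will quietly handle the degenerate cases ($r=0$ or $z=w$).

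For the first statement, the reverse direction is immediate: if $|z-w|\leq s-r$, then for any $\zeta\in\cb{z}{r}$ the triangle inequality gives $|\zeta-w|\leq|\zeta-z|+|z-w|\leq r+(s-r)=s$, so $\zeta\in\cb{w}{s}$. For the forward direction, if $r=0$ then $\cb{z}{r}=\{z\}$ and the inclusion forces $|z-w|\leq s=s-r$. If $r>0$ and $z=w$, the inclusion is equivalent to $r\leq s$, i.e.\ $|z-w|=0\leq s-r$. If $r>0$ and $z\neq w$, then the point
\[
\zeta_0 := z + r\,\frac{z-w}{|z-w|}
\]
satisfies $|\zeta_0-z|=r$, hence $\zeta_0\in\cb{z}{r}\subseteq\cb{w}{s}$, and a direct calculation gives $|\zeta_0-w|=|z-w|+r\leq s$, which rearranges to the required inequality.

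For the second statement, assume $r>0$. The reverse direction uses the reverse triangle inequality: if $|w-z|\geq s+r$, then for any $\zeta\in\ob{z}{r}$ we have $|\zeta-w|\geq|w-z|-|\zeta-z|>(s+r)-r=s$, so $\zeta\notin\cb{w}{s}$. For the forward direction I argue by contrapositive: suppose $|w-z|<s+r$, and exhibit a point in $\ob{z}{r}\cap\cb{w}{s}$. If $|w-z|\leq s$, the point $\zeta=z$ itself works, since $z\in\ob{z}{r}$ (as $r>0$) and $|z-w|\leq s$. If $s<|w-z|<s+r$ (in particular $z\neq w$), then take
\[
\zeta := z + t\,\frac{w-z}{|w-z|}, \qquad t:=|w-z|-s,
\]
which satisfies $0<t<r$, so $\zeta\in\ob{z}{r}$, and $|\zeta-w|=|w-z|-t=s$, so $\zeta\in\cb{w}{s}$. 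Either way $\ob{z}{r}\not\subseteq\C\setminus\cb{w}{s}$.

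The only mild obstacle is bookkeeping in the degenerate cases, in particular distinguishing $z=w$ from $z\neq w$ when choosing the extremal point, and checking that the case $s=0$ or $r=0$ causes no trouble. Once those are dispatched, the content of the lemma is just the triangle inequality in both directions.
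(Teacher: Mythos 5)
Your proof is correct and complete; the paper itself omits the argument, remarking only that the lemma is ``probably well-known, and the proof is elementary.'' Your triangle-inequality argument with explicit extremal points, together with the careful handling of the degenerate cases $r=0$ and $z=w$, is exactly the standard elementary proof the authors had in mind.
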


The following two elementary lemmas are essentially those used in \cite{feinheath2010,mason2010}, but including some additional information distilled from the original proofs. These lemmas are summarised in Figure \ref{elementary lemmas}. In the first lemma, we allow for the line segment to be degenerate.

\begin{figure}[htbp]
\centering
\begin{subfigure}[b]{0.47\textwidth}\centering
\begin{tikzpicture}
\draw[dashed] (-0.5,   0) circle [radius= 1.5];
\draw[dashed] ( 1.5, 0.5) circle [radius=1.25];
\draw[dashed] (0.379,0.22) circle [radius=2.41];
\node at (-1.12,-0.219) {$\ob{a_1}{r_1}$};
\node at (1.78,0.43) {$\ob{a_2}{r_2}$};
\node at (-0.136,1.89) {$\ob{a}{r}$};
\end{tikzpicture}
\caption{Combining open disks.}
\label{combininglem}
\end{subfigure}
~
\begin{subfigure}[b]{0.47\textwidth}
	\centering
\begin{tikzpicture}
\draw(-0.5,   0) circle [radius=   2];
\draw[dashed] ( 1.8,   0) circle [radius=1.35];
\draw (-1.03,   0) circle [radius=1.47];
\node at (-1.49,-0.0409) {$\cb{a}{r}$};
\node at (2.31,-0.00585) {$\ob{a_2}{r_2}$};
\node at (1.6,-1.73) {$\cb{a_1}{r_1}$};
\end{tikzpicture}
\caption{Pulling in the closed disk.}
\label{pullinginlem}
\end{subfigure}
\caption{Elementary lemmas for combining and pulling in disks.}
\label{elementary lemmas}
\end{figure}
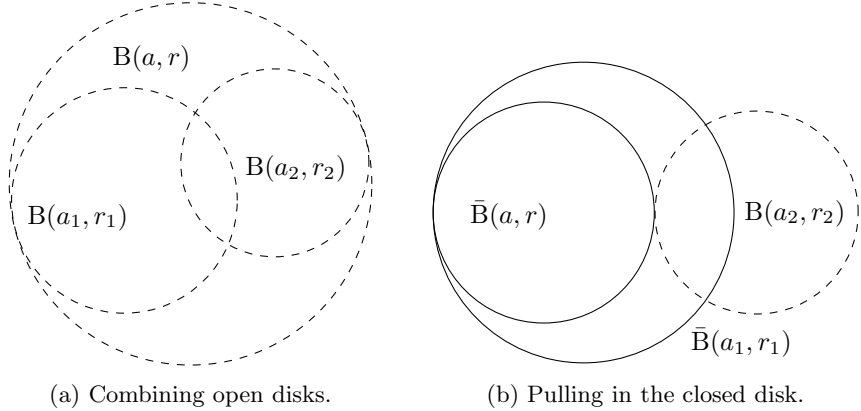

\begin{lemma}\label{combinedisks}
Let $a_1,a_2\in\C$ and $r_1,r_2>0$. Then there exists a unique pair $(a,r)\in\C\times\Rp$ with $\ob{a_1}{r_1}\cup\ob{a_2}{r_2}\subseteq\ob ar$ such that $r$ is minimal. Moreover, the point $a$ lies on the line segment joining $a_1$ and $a_2.$ Suppose further that $\cb{a_1}{r_1}\cap\cb{a_2}{r_2}\neq\emptyset$. Then $r\leq r_1+r_2,$ and equality holds if and only if $\ob{a_1}{r_1}\cap\ob{a_2}{r_2}=\emptyset$.
\end{lemma}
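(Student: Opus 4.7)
My plan is to reduce the problem to a one-parameter optimization along the line through $a_1$ and $a_2$. Write $d = |a_1 - a_2|$. Since each $r_i > 0$, essentially the same argument as in Lemma \ref{lem1} (taking limits of interior points) shows $\ob{a_i}{r_i} \subseteq \ob{a}{r}$ if and only if $|a-a_i| \le r-r_i$. Thus the problem becomes: minimize $r$ subject to $r \ge |a-a_1|+r_1$ and $r \ge |a-a_2|+r_2$, i.e., minimize $\varphi(a) := \max\bigl(|a-a_1|+r_1,\ |a-a_2|+r_2\bigr)$ over $a \in \C$.

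I would dispose of the containment case first: if $d \le |r_1-r_2|$, then by Lemma \ref{lem1} one of the closed disks contains the other, the corresponding larger open disk is itself admissible, and no strictly smaller open disk can contain the larger (again by Lemma \ref{lem1}), giving existence and uniqueness, with $a \in \{a_1,a_2\}$ lying trivially on the segment. In the remaining case $d > |r_1-r_2|$, a convexity-and-symmetry argument on $\varphi$ shows that at any minimum the two terms $|a-a_i|+r_i$ must be equal (otherwise pushing $a$ infinitesimally towards the centre of the larger term strictly decreases the maximum) and that $a$ must lie on the segment $[a_1,a_2]$ (otherwise $|a-a_1|+|a-a_2| > d$, and projecting $a$ orthogonally onto the segment strictly decreases both terms). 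Solving $|a-a_1|+r_1 = |a-a_2|+r_2$ together with $|a-a_1|+|a-a_2|=d$ yields $|a-a_1| = (d+r_2-r_1)/2$, hence $r = (d+r_1+r_2)/2$, with a unique such $a$ on the segment.

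For uniqueness in general, any optimal pair $(a',r)$ must also satisfy $|a'-a_i|\le r-r_i$; summing gives $|a'-a_1|+|a'-a_2| \le 2r - r_1 - r_2 = d$, so the triangle inequality must be an equality, forcing $a' \in [a_1,a_2]$, and then each individual inequality is saturated, determining $a'=a$. For the \emph{moreover} clause, $\cb{a_1}{r_1}\cap\cb{a_2}{r_2}\neq\emptyset$ is equivalent to $d\le r_1+r_2$ by Lemma \ref{lem1}. In the containment case $r = \max(r_1,r_2) < r_1+r_2$ strictly (since both radii are positive), and here the open disks also intersect (the smaller lies inside the larger open disk). In the non-containment case the formula $r=(d+r_1+r_2)/2 \le r_1+r_2$ gives equality iff $d=r_1+r_2$, which by the open-disk version of Lemma \ref{lem1} is precisely $\ob{a_1}{r_1}\cap\ob{a_2}{r_2}=\emptyset$.

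The main obstacle is the perturbation/convexity step that identifies the optimum; once one parametrizes $a=a_1+t(a_2-a_1)/d$ and differentiates $\varphi$ in $t$ on each side of the kink, the formulas drop out, but some care is needed at the boundary $d=|r_1-r_2|$, which is why I would split into cases at the outset. The two case formulas agree at this boundary, so the final statement is uniform.
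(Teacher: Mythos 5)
Your proof is correct. Note that the paper itself gives no proof of Lemma \ref{combinedisks}: it states only that the lemma is ``essentially'' the one used in \cite{feinheath2010,mason2010}, with some extra information distilled from those proofs, so there is no argument in the paper to compare against line by line. Your reduction to minimising $\varphi(a)=\max\bigl(\abs{a-a_1}+r_1,\ \abs{a-a_2}+r_2\bigr)$ via the containment criterion $\ob{a_i}{r_i}\subseteq\ob{a}{r}\iff\abs{a-a_i}\le r-r_i$ is sound, and the key steps all check out: the case split at $d=\abs{r_1-r_2}$ correctly isolates the nested-disk case (where $r=\max(r_1,r_2)<r_1+r_2$ and the open disks meet, so the ``moreover'' clause is vacuously consistent); in the other case the summed constraint $\abs{a-a_1}+\abs{a-a_2}\le 2r-r_1-r_2$ against the triangle inequality gives both the lower bound $r\ge(d+r_1+r_2)/2$ and, once that value is attained, uniqueness of $a$ on the segment; and the final equivalence $r=r_1+r_2\iff d=r_1+r_2\iff\ob{a_1}{r_1}\cap\ob{a_2}{r_2}=\emptyset$ follows from Lemma \ref{lem1}. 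This is somewhat more machinery (convexity, coercivity, metric projection) than the purely synthetic picture the paper's references have in mind, but it has the advantage of delivering existence, uniqueness, and the location of $a$ on the segment in one uniform argument.
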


\begin{lemma}\label{outsidedisk}
Let $a_1,a_2\in\C$ and $r_1>r_2>0$ with $\cb{a_2}{r_2}\nsubseteq\ob{a_1}{r_1}$. Then there exists a unique pair $(a,r)\in\C\times\Rp$ with $\cb ar\subseteq\cb{a_1}{r_1}$ and $\ob{a_2}{r_2}\cap\cb ar=\emptyset$ such that $r$ is maximal. Moreover$,$ $r\geq r_1-r_2$ and equality holds if and only if $\ob{a_2}{r_2}\subseteq\ob{a_1}{r_1}$.
\end{lemma}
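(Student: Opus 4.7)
The plan is to translate the two disk conditions into numerical inequalities via Lemma \ref{lem1}, reduce the problem to a one-dimensional optimization along the line through $a_1$ and $a_2$, and then finish by case analysis. By Lemma \ref{lem1}, $\cb{a}{r} \subseteq \cb{a_1}{r_1}$ is equivalent to $\abs{a - a_1} \leq r_1 - r$, and a second application of Lemma \ref{lem1} with the roles of the two disks interchanged gives (for $r>0$; the case $r=0$ is direct) that $\ob{a_2}{r_2} \cap \cb{a}{r} = \emptyset$ is equivalent to $\abs{a - a_2} \geq r + r_2$. The hypothesis $\cb{a_2}{r_2} \nsubseteq \ob{a_1}{r_1}$ becomes $\abs{a_1 - a_2} \geq r_1 - r_2 > 0$, so in particular $a_1 \neq a_2$.

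For any feasible pair $(a, r)$ the triangle inequality yields
\[
\abs{a_1 - a_2} \geq \abs{a - a_2} - \abs{a - a_1} \geq (r_2 + r) - (r_1 - r),
\]
hence $r \leq (\abs{a_1 - a_2} + r_1 - r_2)/2$; together with $r \leq r_1$ this is the universal upper bound. Next, I would split into two cases. When $\abs{a_1 - a_2} \geq r_1 + r_2$ the bound $r \leq r_1$ is binding; the pair $(a, r) = (a_1, r_1)$ is feasible (the second constraint reads $\abs{a_1 - a_2} \geq r_1 + r_2$), it is the unique maximizer since $r = r_1$ forces $a = a_1$, and $r_1 > r_1 - r_2$. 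When $r_1 - r_2 \leq \abs{a_1 - a_2} < r_1 + r_2$ the second bound is binding, attained by
\[
r = \frac{\abs{a_1 - a_2} + r_1 - r_2}{2}, \qquad a = a_1 + (r_1 - r)\,\frac{a_1 - a_2}{\abs{a_1 - a_2}},
\]
and both constraints then become equalities by direct computation.

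Uniqueness in this second case is the only step worth pausing on: any maximizer must saturate both constraints and achieve equality in the triangle inequality, so $a$, $a_1$, $a_2$ must be collinear with $a_1$ between $a$ and $a_2$, which pins down $a$ on the specified ray at distance $r_1 - r$ from $a_1$. Finally, the bound $r \geq r_1 - r_2$ holds in both cases, and equality can occur only in the second case and only when $\abs{a_1 - a_2} = r_1 - r_2$. An elementary sup computation on $\ob{a_2}{r_2}$, entirely parallel to the translation of the hypothesis, shows that $\ob{a_2}{r_2} \subseteq \ob{a_1}{r_1}$ is equivalent to $\abs{a_1 - a_2} \leq r_1 - r_2$; combining this with the hypothesis gives $\abs{a_1 - a_2} = r_1 - r_2$ and hence the stated equality characterization.
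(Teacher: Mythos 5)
Your proof is correct and complete: the reduction of both constraints to the inequalities $\abs{a-a_1}\leq r_1-r$ and $\abs{a-a_2}\geq r+r_2$ via Lemma \ref{lem1}, the triangle-inequality upper bound, the two-case analysis with explicit maximizers, and the equality characterisation via $\abs{a_1-a_2}=r_1-r_2$ all check out (including the collinearity argument for uniqueness). The paper gives no proof of this lemma, describing it as elementary and deferring to \cite{feinheath2010,mason2010}, and your argument is precisely the kind of direct geometric computation intended.
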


The cases in which equality holds in Lemmas \ref{combinedisks} and \ref{outsidedisk} are illustrated in Figure \ref{extreme_cases_fig}.

\begin{figure}[htbp]
\centering
\begin{subfigure}[b]{0.47\textwidth}
	\centering
\begin{tikzpicture}
\draw[dashed] (-1.25,   0) circle [radius= 1.5];
\draw[dashed] (1.25,   0) circle [radius=   1];
\draw[dashed] (-0.25,   0) circle [radius= 2.5];
\node at (-1.6,0.0804) {$\ob{a_1}{r_1}$};
\node at ( 1.2,0.0804) {$\ob{a_2}{r_2}$};
\node at (-0.242,1.79) {$\ob{a}{r}$};
\end{tikzpicture}
\caption{Case where equality holds in Lemma \ref{combinedisks}.}
\label{combining_extreme_case_fig}
\end{subfigure}
~
\begin{subfigure}[b]{0.47\textwidth}
	\centering
\begin{tikzpicture}
\draw(  -1,   0) circle [radius=2.35];
\draw[dashed] ( 0.5,   0) circle [radius=0.85];
\draw (-1.85,   0) circle [radius= 1.5];
\node at (-2,0) {$\cb{a}{r}$};
\node at (-0.487,1.57) {$\cb{a_1}{r_1}$};
\node at (0.538,0) {$\ob{a_2}{r_2}$};
\end{tikzpicture}
\caption{Case where equality holds in Lemma \ref{outsidedisk}.}
\label{smalldisk_extreme_case_fig}
\end{subfigure}
\caption{Extreme cases in the combining and pulling in lemmas.}
\label{extreme_cases_fig}
\end{figure}
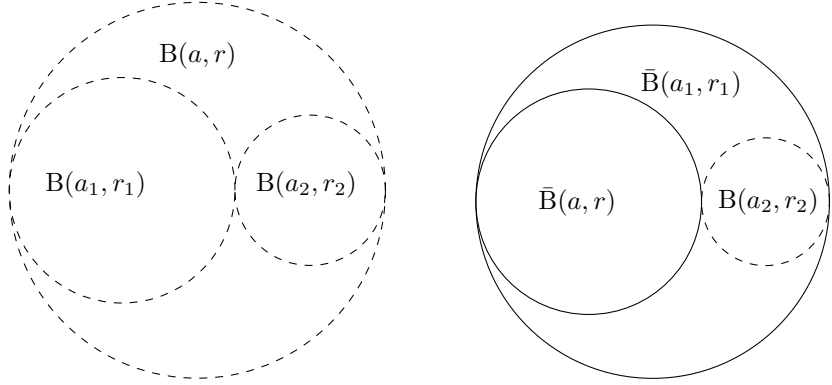

\section{Classicalisation of Swiss cheeses}
\label{secclassicalisation}
We aim to give a topological proof of the Feinstein-Heath classicalisation theorem (Theorem \ref{feinheaththm}), as described in the introduction, stated below in the language of abstract Swiss cheeses.

\begin{theorem}\label{feinheaththm}
Let $A=((a_n,r_n))$ be an abstract Swiss cheese with $\delta_1(A)>0$. Then there exists a classical$,$ abstract Swiss cheese $B\in\scs$ such that $X_B\subseteq X_A$ and $\delta_1(B)\geq\delta_1(A)$.
\end{theorem}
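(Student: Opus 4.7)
The plan is a two-stage optimization inside a compact subspace of $\noninc(M,R)$. First I would reduce to $A\in\noninc$ (reordering disks does not change $X_A$) and delete any small disk $\ob{a_n}{r_n}$ disjoint from $\cb{a_0}{r_0}$; after this each $|a_n|$ is bounded by some fixed $M$. Set $R=r_0-\delta_1(A)=\rho(A)$. The two basic operations, combining (Lemma~\ref{combinedisks}) and pulling in (Lemma~\ref{outsidedisk}), preserve the conditions $\cb{b_0}{s_0}\subseteq\cb{a_0}{r_0}$, $X_B\subseteq X_A$, non-increasing order of the radii, $\rho(B)\leq R$, and $|b_n|\leq M$, and each weakly increases $\delta_1$. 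Now consider
\[
K=\{B\in\noninc(M,R):X_B\subseteq X_A,\ \cb{b_0}{s_0}\subseteq\cb{a_0}{r_0},\ \delta_1(B)\geq\delta_1(A)\}.
\]
The $0$-th coordinate projection of $K$ is bounded, so once $K$ is shown to be closed in $\noninc(M,R)$ the paper's compactness criterion applies.

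Upper semicontinuity of $\delta_1$ (Lemma~\ref{discrepancy_function}) then supplies a maximizer, with maximum $c_1\geq\delta_1(A)$, and the set $M_1=\{B\in K:\delta_1(B)=c_1\}$ is a nonempty compact subset. Fix any $\alpha>1$; by Lemma~\ref{discrepancy_function} the function $\delta_\alpha$ is continuous on $\noninc(M,R)$, so it attains a minimum on $M_1$ at some $B^*$.

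I claim $B^*$ is classical. If not, either (i) two closed disks $\cb{b_k}{s_k},\cb{b_\ell}{s_\ell}$ of $B^*$ with $k,\ell\geq 1$ meet, or (ii) some $\cb{b_k}{s_k}\not\subseteq\ob{b_0}{s_0}$. In case (i), Lemma~\ref{combinedisks} replaces the two disks by a single one of radius $r\leq s_k+s_\ell$, producing $B'\in K$ with $\delta_1(B')=\delta_1(B^*)+(s_k+s_\ell-r)$. Since $\delta_1(B')\leq c_1$, we must have $r=s_k+s_\ell$ (the externally tangent case), so $B'\in M_1$; but then
\[
\delta_\alpha(B')-\delta_\alpha(B^*)=s_k^\alpha+s_\ell^\alpha-(s_k+s_\ell)^\alpha<0
\]
by strict superadditivity of $x\mapsto x^\alpha$ for $\alpha>1$, contradicting minimality of $\delta_\alpha$ on $M_1$. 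In case (ii), Lemma~\ref{outsidedisk} plays the analogous role: $\delta_1$-maximality forces the extreme case $r=s_0-s_k$ of that lemma (so $\cb{b_k}{s_k}$ was only internally tangent to $\partial\ob{b_0}{s_0}$), and the strict inequality $(s_0-s_k)^\alpha+s_k^\alpha<s_0^\alpha$ again yields a strict decrease of $\delta_\alpha$, a contradiction.

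The main obstacle I expect is the compactness step, specifically showing that $\{B\in\noninc(M,R):X_B\subseteq X_A\}$ is closed. Given $B^{(k)}\to B$ with $X_{B^{(k)}}\subseteq X_A$ and $z\in X_B$, the strict-interior case (where $z\in\ob{b_0}{s_0}$ and $|z-b_m|>s_m$ for every $m$) is straightforward, since the strict inequalities propagate to $B^{(k)}$ and force $z\in X_{B^{(k)}}\subseteq X_A$. The subtlety is at boundary points of the various disks; here one exploits the uniform decay $s_n\leq R/n$ on $\noninc(M,R)$ to truncate the collection of small disks and reduce to the interior case.
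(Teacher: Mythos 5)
Your overall architecture --- pass to $\noninc(M,R)$, carve out a compact constraint set, maximise $\delta_1$ on it, then minimise a strictly convex discrepancy and use Lemmas~\ref{combinedisks} and~\ref{outsidedisk} to force the optimiser to be classical --- is exactly the paper's, and your handling of the two tangency cases (including the substitution of $\delta_\alpha$, $\alpha>1$, for the paper's $\delta_2$) is correct. The gap is the one you flag yourself: the closedness of $K$ in $\noninc(M,R)$, and your proposed repair does not work. The condition $X_B\subseteq X_A$ is not closed under coordinatewise convergence, and the failure has nothing to do with the tail of small disks, so the bound $s_n\leq R/n$ and a truncation argument cannot reduce the boundary case to the interior case. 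Concretely, the four open disks $\ob{\pm 1/2}{1/2}$, $\ob{\pm i/2}{1/2}$ cover $\ob{0}{1/\sqrt{2}}\setminus\{0\}$ but not $0$, while enlarging each radius to $1/2+1/k$ covers $0$ as well; so with $X_A=\cb{0}{1}\setminus\ob{0}{1/\sqrt{2}}$ and $B^{(k)}$ the abstract Swiss cheese deleting the four enlarged disks from $\cb{0}{1}$, one has $X_{B^{(k)}}\subseteq X_A$ for every $k$, yet $0\in X_B\setminus X_A$ for the limit $B$. Hence $\{B\in\noninc(M,R):X_B\subseteq X_A\}$ is not closed.

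Your $K$ carries the extra constraint $\delta_1(B)\geq\delta_1(A)$, which happens to exclude this particular example (the covering is too expensive), but whether it restores closedness is precisely the structural question left unaddressed: one would have to show that every $B\in K$ relates to $A$ disk-by-disk in a manner that survives passage to the limit. That is exactly what the paper's ``partially above'' relation (Definition~\ref{partially_above}) encodes --- each deleted disk of $A$ is either expelled from $\cb{b_0}{s_0}$ or swallowed whole by a single deleted disk of $B$ --- and the proof that this relation is preserved under limits (Lemma~\ref{Closed_subset}) is the real content of the compactness step; there the bound $n\leq R/r_n$ is used to pigeonhole the indices of the swallowing disks, not to truncate tails. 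The operations of combining and pulling in visibly preserve the partially-above condition, so once $K$ is replaced by the paper's $\Sw$ the remainder of your argument goes through verbatim; as written, however, the existence of a maximiser is not established and the proof is incomplete.
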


We will see below that it is enough to prove this theorem for abstract Swiss cheeses where some redundancy has been eliminated, as the general case then follows. We first introduce the following terminology.

\begin{definition}
Let $A=((a_n,r_n))$ be an abstract Swiss cheese. Then $A$ is {\em redundancy-free} if$,$ for all $k\in S_A,$ we have $\ob{a_k}{r_k}\cap\cb{a_0}{r_0}\neq\emptyset$, and for all $\ell\in S_A$ with $k\neq \ell$ we have $\ob{a_k}{r_k}\not\subseteq\ob{a_\ell}{r_\ell}$.
\end{definition}

An elementary argument, which we leave to the reader, shows that it is easy to eliminate redundancy from abstract Swiss cheeses with finite radius sum, as in the following lemma.

\begin{lemma}\label{nonredundantcheese}
Let $A=((a_n,r_n))\in\scs$ with $\rho(A)<\infty.$ Then there exists a redundancy-free abstract Swiss cheese $B=((b_n,s_n))\in\noninc$ with $X_B=X_A,$ $\mu(B)<\infty$ and $\cb{b_0}{s_0}=\cb{a_0}{r_0}$ such that $\rho_E(B)\leq\rho_E(A)$ for each subset $E\subseteq\C$. In particular$,$ $\rho(B)\leq\rho(A)$.
\end{lemma}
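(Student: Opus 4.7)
The plan is to keep the outer disk unchanged (so $b_0=a_0$ and $s_0=r_0$) and to build $B$ by selecting a subset $T\subseteq S_A$ of the significant indices and reindexing the selected disks in non-increasing order of radius. I would define $T$ to consist of those $k\in S_A$ such that $\ob{a_k}{r_k}\cap\cb{a_0}{r_0}\neq\emptyset$, no $\ell\in S_A$ satisfies $\ob{a_k}{r_k}\subsetneq\ob{a_\ell}{r_\ell}$, and $k$ is the smallest index in $S_A$ whose disk takes its particular value (the latter to break ties among coinciding disks, since the redundancy-free condition forbids two selected indices from sharing equal disks).

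The first key step is to show that $X_B=X_A$. It suffices to verify that every point of $\bigcup_{k\in S_A}\ob{a_k}{r_k}$ lying in $\cb{a_0}{r_0}$ also lies in some selected disk. Equivalently, given $k\in S_A$ with $\ob{a_k}{r_k}\cap\cb{a_0}{r_0}\neq\emptyset$, I need to exhibit $\ell\in T$ with $\ob{a_k}{r_k}\subseteq\ob{a_\ell}{r_\ell}$. This is the only step that uses $\rho(A)<\infty$ in an essential way: any $\ell\in S_A$ with $\ob{a_k}{r_k}\subseteq\ob{a_\ell}{r_\ell}$ must satisfy $r_\ell\geq r_k>0$, and summability forces there to be only finitely many such indices; picking one of maximal radius (with smallest index as a tiebreaker) produces the required $\ell\in T$.

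Next I would enumerate the selected indices in non-increasing order of radius (possible because only finitely many radii exceed any given threshold) and pad with degenerate entries if $T$ is finite. This produces $B\in\noninc$ with $\cb{b_0}{s_0}=\cb{a_0}{r_0}$. The redundancy-free condition follows directly from the construction of $T$: every selected disk meets $\cb{a_0}{r_0}$, and no inclusion $\ob{b_k}{s_k}\subseteq\ob{b_\ell}{s_\ell}$ can occur for $k\neq\ell$ because strict containment is ruled out by the maximality clause and equality by the tiebreaker.

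It remains to verify the quantitative bounds. For $\mu(B)$, every selected disk meets $\cb{a_0}{r_0}$, so $\abs{a_k-a_0}\leq r_k+r_0\leq\rho(A)+r_0$ for each $k\in T$, giving $\mu(B)\leq\abs{a_0}+r_0+\rho(A)<\infty$. The inequality $\rho_E(B)\leq\rho_E(A)$ for any $E\subseteq\C$ is immediate once the reindexing is understood: the collection of disks of $B$ is, up to relabelling, a sub-collection of those of $A$, so the defining sum for $\rho_E(B)$ is a sub-sum of that for $\rho_E(A)$ (and specialising to $E=\C$ yields $\rho(B)\leq\rho(A)$). The main obstacle is the existence-of-maximal-containing-disk step described above; everything else amounts to straightforward bookkeeping.
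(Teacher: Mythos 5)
Your proposal is correct. The paper deliberately omits this proof (``an elementary argument, which we leave to the reader''), and your construction is exactly the intended one: retain only the maximal disks that meet $\cb{a_0}{r_0}$, use finiteness of $\{\ell\in S_A:\ob{a_k}{r_k}\subseteq\ob{a_\ell}{r_\ell}\}$ (forced by $\rho(A)<\infty$) to find a maximal containing disk and hence preserve $X_A$, break ties among coinciding disks by smallest index, and rearrange into non-increasing order; all the stated properties then follow as you describe.
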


Note that, since $\cb{b_0}{s_0}=\cb{a_0}{r_0}$ and $\rho(B)\leq\rho(A)$ in the above lemma we actually have $\delta_1(B)\geq\delta_1(A)$, as we claimed before. It is clear, by Lemma~\ref{nonredundantcheese}, that to prove Theorem~\ref{feinheaththm} it is enough to consider $A$ such that $\delta_1(A)>0$ and $A$ is redundancy-free.

We now define a relation on $\mathcal F$ which will help us to construct a compact subset of $\mathcal F$. Then we prove the existence of classical abstract Swiss cheeses with desired properties in this compact subset.

\begin{definition} \label{partially_above}
Let $A=((a_n,r_n))$ and $B=((b_n,s_n))$ be abstract Swiss cheeses. We say $B$ is \emph{partially above} $A$ if $\cb{b_0}{s_0}\subseteq \cb{a_0}{r_0},$ and, for each $n\in \N$, either $\ob{a_n}{r_n}\subseteq \C\setminus \cb{b_0}{s_0}$, or there exists $m\in \N$ such that $\ob{a_n}{r_n}\subseteq \ob{b_m}{s_m},$ or both.
\end{definition}

It is clear that $A$ is partially above itself and that if $B$ is partially above $A$, then $X_B\subseteq X_A$.

\bigskip

Fix a redundancy-free abstract Swiss cheese $A=((a_n,r_n))\in\noninc$ with $\delta_1(A)>0$. Note that $\rho(A)<\infty$ and, since $A$ is redundancy-free, $\mu(A)<\infty$. We set $R=\rho(A)$ and $M=\mu(A)$.

Let $\Sw$ be the collection of all $B=((b_n,s_n))\in\noninc(M,R)$ such that $B$ is partially above $A$.
Recall that, since $B\in\noninc(M,R)$, we have $s_n\leq R/n$ for all $n\in S_{B}$ so that
\begin{equation} \label{bound}
n\leq \frac R{r_n}\qquad(n\in S_B).
\end{equation}
By our conditions on $A$ it is clear that $A\in\Sw$. We now prove that $\Sw$ is compact.

\begin{lemma} \label{Closed_subset}
The set $\Sw$ is a compact subset of $\scs$.
\end{lemma}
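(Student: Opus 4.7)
The strategy is to apply the compactness criterion for $\noninc(M,R)$ stated in the paragraph preceding Lemma \ref{discrepancy_function}: a closed subset of $\noninc(M,R)$ is compact if and only if its $0$-th coordinate projection is a bounded subset of $\C\times\Rp$. Accordingly, I need to establish (i) boundedness of the $0$-th projection of $\Sw$ and (ii) closedness of $\Sw$ in $\scs$. Part (i) is immediate: every $B=((b_n,s_n))\in\Sw$ is partially above $A$, so $\cb{b_0}{s_0}\subseteq\cb{a_0}{r_0}$, and Lemma \ref{lem1} then yields $\abs{b_0-a_0}\leq r_0-s_0$, whence $\abs{b_0}\leq\abs{a_0}+r_0$ and $s_0\leq r_0$.

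For (ii), suppose $B^{(m)}=((b_n^{(m)},s_n^{(m)}))\to B=((b_n,s_n))$ in $\scs$ with each $B^{(m)}\in\Sw$. By the product topology this is coordinatewise convergence. The non-increasing property of $(s_n)_{n\geq 1}$ survives pointwise limits, and the bounds $\mu(B)\leq M$ and $\rho(B)\leq R$ follow from the lower semicontinuity of $\mu$ and $\rho$ noted after Definition \ref{absSCdef}, so $B\in\noninc(M,R)$. The inclusion $\cb{b_0}{s_0}\subseteq\cb{a_0}{r_0}$ follows by letting $m\to\infty$ in $\abs{b_0^{(m)}-a_0}\leq r_0-s_0^{(m)}$ and reapplying Lemma \ref{lem1}.

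It remains to check the second clause of ``partially above'' at each $n\in S_A$ (for $n\notin S_A$ the disk $\ob{a_n}{r_n}$ is empty and the clause holds trivially). This is the main obstacle, because for each $m$ the cheese $B^{(m)}$ may choose a different alternative: either $\ob{a_n}{r_n}\subseteq\C\setminus\cb{b_0^{(m)}}{s_0^{(m)}}$ (alternative I), or there exists $k_m\in\N$ with $\ob{a_n}{r_n}\subseteq\ob{b_{k_m}^{(m)}}{s_{k_m}^{(m)}}$ (alternative II). If alternative I holds for infinitely many $m$, I pass to that subsequence, use Lemma \ref{lem1} to write the condition as $\abs{a_n-b_0^{(m)}}\geq r_n+s_0^{(m)}$, and take the limit to obtain alternative I for $B$. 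Otherwise alternative II holds for all large $m$, and the crucial observation is that the indices $k_m$ are uniformly bounded: since $r_n>0$ forces $s_{k_m}^{(m)}\geq r_n$, the estimate \eqref{bound} applied to $B^{(m)}$ gives $k_m\leq R/r_n$. A pigeonhole extraction then produces a constant subsequence $k_m\equiv k$, and passing to the limit in the containment (equivalently in $\abs{a_n-b_k^{(m)}}+r_n\leq s_k^{(m)}$) yields $\ob{a_n}{r_n}\subseteq\ob{b_k}{s_k}$, so alternative II holds for $B$. This completes the verification that $B\in\Sw$, and hence $\Sw$ is closed.
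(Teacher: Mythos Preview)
Your proof is correct and follows essentially the same route as the paper's: both invoke the compactness criterion for $\noninc(M,R)$, both handle the $0$-th projection trivially via $\cb{b_0}{s_0}\subseteq\cb{a_0}{r_0}$, and both verify that ``partially above'' passes to limits by splitting into the two alternatives, using the bound \eqref{bound} to pigeonhole the indices $k_m$ onto a fixed value, and then passing to the limit via Lemma~\ref{lem1}. The only cosmetic differences are that you prove closedness in $\scs$ (and hence explicitly check $B\in\noninc(M,R)$) whereas the paper works relative to $\noninc(M,R)$, and your case split is organised as ``alternative I infinitely often / else alternative II eventually'' rather than the paper's ``alternative I eventually / else alternative II infinitely often''; these are logically equivalent.
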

\begin{proof}
{As noted earlier, it is enough to prove that $\Sw$ is closed in $\noninc(M,R)$ and that the $0$-th coordinate projection is bounded on $\Sw$. The latter is clear from the definition of $\Sw$, so we prove that $\Sw$ is closed in $\noninc(M,R)$.}

For each $m\in\No$, let $A^{(m)} = ((a_n^{(m)},r_n^{(m)}))_{n=0}^\infty$ be an abstract Swiss cheese in $\Sw$, and suppose the sequence $(A^{(m)})$ converges to $B=((b_n,s_n))\in\noninc(M,R)$. It remains to show that $B$ is partially above $A$.

It is easy to see (by Lemma~\ref{lem1}, for example) that $\cb{b_0}{s_0}\subseteq \cb{a_0}{r_0}$. Fix $k\in \mathbb N$. We show that either $\ob{a_k}{r_k}\subseteq \C\setminus\cb{b_0}{s_0}$ or there exists $\ell\in S_B$ with $\ob{a_k}{r_k}\subseteq\ob{b_\ell}{s_\ell}$. If $r_k=0$ then $\ob{a_k}{r_k}=\emptyset$ and the result is trivial, so we may assume that $k\in S_A$. First assume that there exists $n_0\in\No$ such that, for all $m\geq n_0$ we have
$\ob{a_k}{r_k} \subseteq \C\setminus \cb{a_0^{(m)}}{r_0^{(m)}}$. Then we have $\abs{a_k-a_0^{(m)}}\geq r_k+r_0^{(m)}$
for all $m\geq n_0$ by Lemma~\ref{lem1}. Letting $m\to \infty$, we obtain $\abs{a_k-a_0}\geq r_k+r_0$,
and so, by Lemma~\ref{lem1} again, $\ob{a_k}{r_k} \subseteq \C\setminus \cb{b_0}{s_0}.$

Otherwise for each $n_0\in\No$, there exist $m\geq n_0$ and $\ell_m\in\N$ such that
\begin{equation}\label{label1}
\ob{a_k}{r_k} \subseteq \ob{a_{\ell_m}^{(m)}}{r_{\ell_m}^{(m)}}.
\end{equation}
By passing to a subsequence of $A^{(m)}$ if necessary, we can assume \eqref{label1} holds for all $m\in\No$. For each $m$, since $r_{\ell_m}^{(m)}\geq r_k$, by \eqref{bound} we have $\ell_m\leq R/r_k$. Thus  there must be a $p\in \mathbb N$ that appears infinitely many times in the sequence $(\ell_m)_m$. Passing to a subsequence again if necessary, we may assume $\ell_m = p$ for all $m$. Since $A^{(m)}\to B$ as $m\to\infty$ and $\ob{a_k}{r_k} \subseteq \ob{a_{p}^{(m)}}{r_{p}^{(m)}}$,  it is again easy to show, using Lemma~\ref{lem1}, that $ \ob{a_k}{r_k} \subseteq \ob{b_p}{s_p}$.
Thus $B$ is partially above $A$ and we have proved that $\Sw$ is closed.
\end{proof}

Since $\delta_1$ is upper semicontinuous and $\Sw$ is compact and non-empty, $\delta_1$ attains a maximum value on $\Sw$ and this value is at least $\delta_1(A)>0$. Let
\[
\mathcal S_1 := \{ A'\in \Sw: \delta_1(A') = \sup_{B\in \Sw}\delta_1(B)\},
\]
which is also compact and non-empty.

\begin{lemma}\label{safe_replacement}
Let $B=((b_n,s_n))\in\Sw$.
\begin{enumerate}
  \item Suppose that $k,\ell\in S_B$ with $k\neq\ell$ such that $\cb{b_k}{s_k}\cap\cb{b_\ell}{s_\ell}\neq\emptyset$. If we have $\ob{b_k}{s_k}\cap\ob{b_\ell}{s_\ell}\neq\emptyset$ then there exists $B'\in\Sw$ such that $\delta_1(B')>\delta_1(B)$. Otherwise$,$ there exists $B'\in\Sw$ with $\delta_1(B')=\delta_1(B)$ and $\delta_2(B')<\delta_2(B)$.
  \item Suppose that $k\in S_B$ with $s_k<s_0$ such that $\cb{b_k}{s_k}\nsubseteq\ob{b_0}{s_0}$. If we have $\ob{b_k}{s_k}\nsubseteq\ob{b_0}{s_0}$ then there exists $B'\in\Sw$ such that $\delta_1(B')>\delta_1(B)$. Otherwise$,$ there exists $B'\in\Sw$ with $\delta_1(B')=\delta_1(B)$ and $\delta_2(B')<\delta_2(B)$.
\end{enumerate}
\end{lemma}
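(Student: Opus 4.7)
The plan is to handle the two parts symmetrically by, in each case, using one of the two geometric lemmas (Lemma~\ref{combinedisks} for part~(a), Lemma~\ref{outsidedisk} for part~(b)) to produce a candidate $B'$, verifying that $B'\in\Sw$, and then reading off the discrepancy behaviour from the equality/strict-inequality clauses of those lemmas.

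For part~(a), I would apply Lemma~\ref{combinedisks} to the pair $\ob{b_k}{s_k},\ob{b_\ell}{s_\ell}$ (their closures meet by hypothesis) to obtain the minimal enclosing disk $\ob{a}{r}$, and take $B'$ to be the abstract Swiss cheese obtained from $B$ by replacing these two disks by $\ob{a}{r}$ as in Definition~\ref{disk_mani}(iii). Membership in $\noninc(M,R)$ is essentially automatic: the construction preserves the non-increasing property, $\rho(B')=\rho(B)-s_k-s_\ell+r\leq\rho(B)\leq R$ because $r\leq s_k+s_\ell$, and since $a$ lies on the segment between $b_k$ and $b_\ell$ we get $|a|\leq\max(|b_k|,|b_\ell|)\leq M$. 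To check that $B'$ is partially above $A$, note that the outer disk is unchanged, and for each $n\in S_A$ with $\ob{a_n}{r_n}\subseteq\ob{b_m}{s_m}$ the only nontrivial case is $m\in\{k,\ell\}$, where $\ob{b_k}{s_k}\cup\ob{b_\ell}{s_\ell}\subseteq\ob{a}{r}$ does the job. Finally, $\delta_1(B')-\delta_1(B)=(s_k+s_\ell)-r$, so Lemma~\ref{combinedisks} gives $\delta_1(B')>\delta_1(B)$ exactly when $\ob{b_k}{s_k}\cap\ob{b_\ell}{s_\ell}\neq\emptyset$; in the complementary case $r=s_k+s_\ell$ and a one-line algebraic computation yields $\delta_2(B')-\delta_2(B)=-2s_ks_\ell<0$.

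For part~(b), I would apply Lemma~\ref{outsidedisk} with $(a_1,r_1)=(b_0,s_0)$ and $(a_2,r_2)=(b_k,s_k)$, which is legitimate since $s_0>s_k>0$ and $\cb{b_k}{s_k}\not\subseteq\ob{b_0}{s_0}$. This yields a maximal closed disk $\cb{a}{r}\subseteq\cb{b_0}{s_0}$ disjoint from $\ob{b_k}{s_k}$. Let $B'$ be the abstract Swiss cheese with outer disk $\cb{a}{r}$ whose inner sequence is obtained from $(b_n,s_n)_{n\geq 1}$ by deleting the term at index $k$. Membership in $\noninc(M,R)$ is immediate as both $\rho$ and $\mu$ can only decrease. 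Partial-above-ness is verified in three cases: disks of $A$ previously disjoint from $\cb{b_0}{s_0}$ are still disjoint from the smaller $\cb{a}{r}$; disks covered by some $\ob{b_m}{s_m}$ with $m\neq k$ are unaffected; and the crucial case is a disk $\ob{a_n}{r_n}\subseteq\ob{b_k}{s_k}$, for which the defining property $\ob{b_k}{s_k}\cap\cb{a}{r}=\emptyset$ shows $\ob{a_n}{r_n}\subseteq\C\setminus\cb{a}{r}$, so it sits outside the new outer disk instead. The discrepancy calculation then gives $\delta_1(B')-\delta_1(B)=r-s_0+s_k$, and the equality clause of Lemma~\ref{outsidedisk} ($r=s_0-s_k$ iff $\ob{b_k}{s_k}\subseteq\ob{b_0}{s_0}$) yields exactly the required dichotomy; in the equality case $r=s_0-s_k$, expanding $r^2$ gives $\delta_2(B')-\delta_2(B)=2s_k(s_k-s_0)<0$ because $s_k<s_0$.

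I expect the only real obstacle to be the verification that $B'$ is still partially above $A$ in part~(b); this is exactly the situation where a disk of $A$ has been nested inside the deleted disk $\ob{b_k}{s_k}$, and one must use the geometric property $\ob{b_k}{s_k}\cap\cb{a}{r}=\emptyset$ produced by Lemma~\ref{outsidedisk} to re-route that disk into the ``outside the outer disk'' alternative of Definition~\ref{partially_above}. Everything else amounts to bookkeeping on $\rho$, $\mu$, and the algebraic identities $r\leq s_k+s_\ell$ and $r\geq s_0-s_k$ with their equality cases.
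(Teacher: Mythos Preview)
Your proposal is correct and follows essentially the same approach as the paper: apply Lemma~\ref{combinedisks} in part~(a) and Lemma~\ref{outsidedisk} in part~(b) to construct $B'$, verify membership in $\Sw$ via the $\rho$, $\mu$, and partially-above conditions, and read off the $\delta_1$/$\delta_2$ behaviour from the equality clauses. Your treatment of part~(b) is in fact more detailed than the paper's (which simply says the verification that $B'\in\Sw$ is ``similar to part~(a)''), and you correctly identify the key point that a disk of $A$ contained in the deleted $\ob{b_k}{s_k}$ is rerouted to the ``outside the new outer disk'' alternative via $\ob{b_k}{s_k}\cap\cb{a}{r}=\emptyset$; note only that the replacement operation is Definition~\ref{disk_mani}(c), not~(iii).
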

\begin{proof}
(a) Let $\ob bs$ be the open disk obtained by applying Lemma \ref{combinedisks} to the disks $\ob{b_k}{s_k}$ and $\ob{b_\ell}{s_\ell}$. Let $B'=((b_n',s_n'))$ be obtained by replacing the disks $\ob{b_k}{s_k}$ and $\ob{b_\ell}{s_\ell}$ by $\ob bs$.

If $\ob{b_k}{s_k}\cap\ob{b_\ell}{s_\ell}\neq\emptyset$ then we have $s<s_k+s_\ell$ and so $\delta_1(B')>\delta_1(B)$. Otherwise, we have $s=s_k+s_\ell$ and hence $s^2>s_k^2+s_\ell^2$. In this case, we have $\delta_1(B')=\delta_1(B)$ and $\delta_2(B')<\delta_2(B)$.

We now show that $B'\in\Sw$. Clearly $B'\in\noninc$ by our definition of replacing disks in an abstract Swiss cheese. Since $b$ lies on the line segment connecting $b_k$ and $b_\ell$, it follows that $\mu(B')\leq\mu(B)$ and since $s\leq s_k+s_\ell$ we have $\rho(B')\leq\rho(B)$. Thus $B'\in\noninc(M,R)$. It remains to show that $B'$ is partially above $A$.

We have $\cb{b_0'}{s_0'}=\cb{b_0}{s_0}$ so that $\cb{b_0'}{s_0'}\subseteq\ob{a_0}{r_0}$. Fix $p\in\N$. Since $B$ is partially above $A$, we have $\ob{a_p}{r_p}\subseteq\ob{b_m}{s_m}$ for some $m\in S_B$ or $\ob{a_p}{r_p}\subseteq\C\setminus\cb{b_0}{s_0}$. If $\ob{a_p}{r_p}\subseteq\C\setminus\cb{b_0}{s_0}$ then we also have $\ob{a_p}{r_p}\subseteq\C\setminus\cb{b_0'}{s_0'}$. Otherwise, let $m\in S_B$ with $\ob{a_p}{r_p}\subseteq\ob{b_m}{s_m}$. If $m=k$ or $m=\ell$, then, with $q$ as the index where $\ob bs$ was inserted, we have $\ob{a_p}{r_p}\subseteq\ob{b_q'}{s_q'}$. If $m\neq k,\ell$, then there exists $q\in S_{B'}$ such that $\ob{b_q'}{s_q'}=\ob{b_m}{s_m}$. Thus $\ob{a_p}{r_p}\subseteq\ob{b_q'}{s_q'}.$ Hence $B'$ is partially above $A$, and so $B'\in\Sw$ as required.

(b) Let $\cb bs$ be the closed disk obtained by applying Lemma \ref{outsidedisk} to the disks $\ob{b_0}{s_0}$ and $\ob{b_k}{s_k}$. Let $B'=((b_n',s_n'))$ be the abstract Swiss cheese obtained by deleting the disks at indices $0$ and $k$ and inserting the disk $\cb bs$ at index $0$.

If $\ob{b_k}{s_k}\nsubseteq\ob{b_0}{s_0}$ then we have $s>s_0-s_k$ so that $\delta_1(B')>\delta_1(B)$. Otherwise, we have $s_0=s+s_k$ and $s_0^2>s^2+s_k^2$ so that $\delta_1(B')=\delta_1(B)$ and $\delta_2(B')<\delta_2(B)$.

The proof that $B'\in\Sw$ is similar to the proof in part (a).
%
%
%
%
%
\end{proof}

We are now ready to prove the main results of this section.

\begin{theorem} \label{semi_classicalization}
All abstract Swiss cheeses in $\mathcal S_1$ are semiclassical.
\end{theorem}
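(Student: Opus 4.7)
The plan is to argue by contradiction. Fix $B=((b_n,s_n))\in\mathcal S_1$ and suppose $B$ is not semiclassical. Before unpacking the failure, I extract two bookkeeping facts from the defining maximality of $\mathcal S_1$. Since $A\in\Sw$, we have $\sup_{B'\in\Sw}\delta_1(B')\geq\delta_1(A)>0$, so $\delta_1(B)>0$. Writing out $\delta_1(B)=s_0-\sum_{n=1}^\infty s_n>0$ immediately gives $s_0>0$, and moreover for each individual $k\in S_B$ one has
\[
s_k\leq\sum_{n=1}^\infty s_n<s_0,
\]
so every disk of strictly positive radius appearing in $B$ is strictly smaller than the outer disk.

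Since $B\in\noninc(M,R)$, the radius sum $\rho(B)$ is automatically finite, and the previous paragraph supplies $s_0>0$. Hence the numerical parts of Definition~\ref{absSCdef}(c) are satisfied, and the supposed failure of semiclassicality must occur in one of the two set-theoretic clauses. I split into the two cases and, in each, produce $B'\in\Sw$ with $\delta_1(B')>\delta_1(B)$, contradicting $B\in\mathcal S_1$. First, suppose there are distinct $k,\ell\in S_B$ with $\ob{b_k}{s_k}\cap\ob{b_\ell}{s_\ell}\neq\emptyset$. Since intersecting open disks have intersecting closures, Lemma~\ref{safe_replacement}(a) applies and yields such a $B'$. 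Second, suppose instead that there is some $k\in S_B$ with $\ob{b_k}{s_k}\not\subseteq\ob{b_0}{s_0}$. A fortiori $\cb{b_k}{s_k}\not\subseteq\ob{b_0}{s_0}$, and combined with $s_k<s_0$ this puts us in the hypothesis of Lemma~\ref{safe_replacement}(b), which again supplies the required $B'$.

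The only slightly delicate point is checking the hypothesis $s_k<s_0$ needed by Lemma~\ref{safe_replacement}(b); this is precisely where the strict positivity of $\delta_1$ that $\mathcal S_1$ inherits from $A$ (via $\delta_1(A)>0$) is essential, and is in accord with the role of this hypothesis in Theorem~\ref{feinheaththm}. Everything else is a direct translation of the negations of the two clauses defining semiclassicality into the two cases of Lemma~\ref{safe_replacement}, so no further work is required.
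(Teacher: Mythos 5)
Your proof is correct and follows essentially the same route as the paper: reduce to the two possible set-theoretic failures of semiclassicality and invoke Lemma~\ref{safe_replacement}(a) and (b) respectively, using $\delta_1(B)\geq\delta_1(A)>0$ to get $s_k<s_0$. Your extra checks (that the numerical clauses of Definition~\ref{absSCdef}(c) hold automatically, and that the open-disk conditions imply the closed-disk hypotheses of the lemma) are correct and merely make explicit what the paper leaves implicit.
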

\begin{proof}
Let $B=((b_n,s_n))\in \mathcal S_1$. Suppose for contradiction that $B$ is not a semiclassical abstract Swiss cheese. Consider first the case where there are distinct $k,\ell\in S_B$ with $\ob{b_k}{s_k}\cap \ob{b_\ell}{s_\ell} \neq \emptyset$. By Lemma \ref{safe_replacement}(a) there exists $B'\in\Sw$ with $\delta_1(B')>\delta_1(B)$, which is a contradiction.

The remaining case is where there is a $k\in S_B$ with $\ob{b_k}{s_k}\nsubseteq \ob{b_0}{s_0}$. We have $\delta_1(B)\geq\delta_1(A)>0$ so that $s_k<s_0$. By Lemma \ref{safe_replacement}(b) there exists $B'\in\Sw$ with $\delta_1(B')>\delta_1(B)$, which is a contradiction.
\end{proof}

Since $\mathcal S_1$ is compact and non-empty, $\delta_2$ attains both maximum and minimum values on $\mathcal S_1$. Let
\[
\mathcal S_2 := \{ A'\in \mathcal S_1: \delta_2(A') = \inf_{B\in \mathcal S_1} \delta_2(B) \},
\]
which is again non-empty and compact. Since all the abstract Swiss cheeses in $\mathcal S_1$ are semiclassical, $\pi\delta_2(B)$ is the area of $X_B$ for all $B\in \mathcal S_1$, and hence for all $B\in\mathcal S_2$. So the abstract Swiss cheeses in $\mathcal S_2$ are obtained by finding those $B\in\mathcal S_1$ for which the area of $X_B$ is minimal on $\mathcal S_1$.

\begin{theorem} \label{classicalization}
All abstract Swiss cheeses in $\mathcal S_2$ are classical.
\end{theorem}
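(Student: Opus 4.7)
The plan is to argue by contradiction, mirroring the structure of the proof of Theorem~\ref{semi_classicalization} but using the ``equality'' branches of Lemma~\ref{safe_replacement}. Fix $B=((b_n,s_n))\in\mathcal S_2$ and suppose $B$ is not classical. Since $B\in\mathcal S_1$, Theorem~\ref{semi_classicalization} guarantees that $B$ is semiclassical, so $\ob{b_k}{s_k}\subseteq\ob{b_0}{s_0}$ for every $k\in S_B$ and the open disks $\{\ob{b_k}{s_k}:k\in S_B\}$ are pairwise disjoint. Failure of classicality therefore must arise from one of exactly two geometric situations: either there exist distinct $k,\ell\in S_B$ with $\cb{b_k}{s_k}\cap\cb{b_\ell}{s_\ell}\neq\emptyset$, or there exists $k\in S_B$ with $\cb{b_k}{s_k}\nsubseteq\ob{b_0}{s_0}$.

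In the first case, since the corresponding open disks are disjoint, Lemma~\ref{safe_replacement}(a) (``otherwise'' branch) supplies $B'\in\Sw$ with $\delta_1(B')=\delta_1(B)$ and $\delta_2(B')<\delta_2(B)$. In the second case, from $\delta_1(B)\geq\delta_1(A)>0$ we get $s_0>\sum_{n=1}^\infty s_n\geq s_k$, so $s_k<s_0$; moreover, since $B$ is semiclassical, $\ob{b_k}{s_k}\subseteq\ob{b_0}{s_0}$, and now Lemma~\ref{safe_replacement}(b) (``otherwise'' branch) again produces $B'\in\Sw$ with $\delta_1(B')=\delta_1(B)$ and $\delta_2(B')<\delta_2(B)$.

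In either case, $\delta_1(B')=\delta_1(B)=\sup_{C\in\Sw}\delta_1(C)$, so $B'\in\mathcal S_1$, while $\delta_2(B')<\delta_2(B)=\inf_{C\in\mathcal S_1}\delta_2(C)$. This contradicts the definition of $\mathcal S_2$ and completes the proof.

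There is no real obstacle here: all the hard work has been absorbed into Lemmas~\ref{combinedisks}, \ref{outsidedisk}, and \ref{safe_replacement}, and into the compactness of $\Sw$ together with the (semi)continuity of the discrepancy functions. The only minor point worth checking explicitly is the estimate $s_k<s_0$, which follows cleanly from the positivity of $\delta_1(B)$ and the fact that $s_k$ appears as one of the non-negative terms in the sum $\sum_{n\geq 1}s_n$.
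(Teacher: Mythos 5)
Your proof is correct and follows essentially the same route as the paper: both argue by contradiction via the two branches of Lemma~\ref{safe_replacement}, contradicting either the maximality of $\delta_1$ on $\Sw$ or the minimality of $\delta_2$ on $\mathcal S_1$. The only (harmless) difference is that you invoke Theorem~\ref{semi_classicalization} to pin down the equality branches of the lemma, whereas the paper simply notes that either branch already yields a contradiction for $B\in\mathcal S_2$.
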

\begin{proof}
Let $B=((b_n,s_n))\in \mathcal S_2$. Suppose for contradiction that $B$ is not classical. If there are distinct $k,\ell\in S_B$ with $\cb{b_k}{s_k}\cap \cb{b_\ell}{s_\ell}\neq \emptyset$ then, by Lemma \ref{safe_replacement}(a), there exists $B'\in\Sw$ such that either $\delta_1(B')>\delta_1(B)$ or $\delta_1(B')=\delta_1(B)$ and $\delta_2(B')<\delta_2(B)$. In either case we obtain a contradiction since $B\in\mathcal S_2$.

Otherwise there exists $k\in S_B$ with $\cb{b_k}{s_k}\nsubseteq \ob{b_0}{s_0}$. Note that $s_k<s_0$ since $\delta_1(B)>0$. By Lemma \ref{safe_replacement}(b) there exists $B'\in\Sw$ such that either $\delta_1(B')>\delta_1(B)$ or $\delta_1(B')=\delta_1(B)$ and $\delta_2(B')<\delta_2(B)$. In either case we obtain a contradiction since $B\in\mathcal S_2$.
\end{proof}

In the next theorem, we show that if $X_A$ has empty interior then we do not have to minimise $\delta_2$ on $\mathcal S_1$ to find classical abstract Swiss cheeses.

\begin{theorem} \label{non_interior_classicalization}
If $\tint{X_{A}}=\emptyset$ then each abstract Swiss cheese in $\mathcal S_1$ is classical.
\end{theorem}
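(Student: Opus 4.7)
My plan is to derive a contradiction from the assumption that some $B=((b_n,s_n))\in\mathcal{S}_1$ is not classical, by showing that the empty-interior hypothesis lets us upgrade Lemma~\ref{safe_replacement} into a \emph{strict} improvement of $\delta_1$. By Theorem~\ref{semi_classicalization} such a $B$ is semiclassical, so the failure of classicality is a tangency of one of two types: (a) two disks $\ob{b_k}{s_k},\ob{b_\ell}{s_\ell}$ with $k\neq\ell$ in $S_B$ whose closures meet at a single point $z$; or (b) a disk $\ob{b_k}{s_k}$ with $\cb{b_k}{s_k}\cap\partial\ob{b_0}{s_0}\ne\emptyset$. (If in (a) the tangent point $z$ lies on $\partial\ob{b_0}{s_0}$, then case (b) applies to both $k$ and $\ell$, so we may assume $z\in\ob{b_0}{s_0}$ in case (a).)

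In case (a), form the combined disk $\ob{b}{s}$ (with $s=s_k+s_\ell$) from Lemma~\ref{combinedisks}, and set $V:=\ob{b}{s}\setminus(\cb{b_k}{s_k}\cup\cb{b_\ell}{s_\ell})$; in case (b), form the shrunken disk $\cb{b}{s}$ (with $s=s_0-s_k$) from Lemma~\ref{outsidedisk}, and set $V:=\ob{b_0}{s_0}\setminus(\cb{b}{s}\cup\cb{b_k}{s_k})$. In both cases $V$ is a non-empty open set meeting $\ob{b_0}{s_0}$ (using $z\in\ob{b_0}{s_0}$ in case (a); the area computation $2\pi s_k(s_0-s_k)>0$ in case (b)). Because $X_B\subseteq X_A$ and $\tint{X_A}=\emptyset$, the open set $V\cap\ob{b_0}{s_0}$ cannot lie entirely in $X_A$; hence there is a point $y\in V\cap\ob{b_0}{s_0}\setminus X_A$. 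Then $y\in\ob{a_n}{r_n}$ for some $n\in S_A$, and since $y\in\ob{b_0}{s_0}$ the partially-above property for $B$ forces $y\in\ob{b_p}{s_p}$ for some $p\in S_B$. The hypothesis $y\in V$ forces $p$ to be distinct from $k$ (and from $\ell$ in case (a)).

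Now this extra disk yields the strict improvement. In case (a), $y\in\ob{b_p}{s_p}\cap\ob{b}{s}$, so by Lemma~\ref{combinedisks} the smallest open disk $\ob{b'}{s'}$ containing $\ob{b}{s}\cup\ob{b_p}{s_p}$ satisfies the strict inequality $s'<s+s_p=s_k+s_\ell+s_p$. Let $B''$ be obtained from $B$ by deleting $\ob{b_k}{s_k},\ob{b_\ell}{s_\ell},\ob{b_p}{s_p}$ and inserting $\ob{b'}{s'}$ at the first index making the radius sequence non-increasing; then $\delta_1(B'')-\delta_1(B)=s_k+s_\ell+s_p-s'>0$. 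Analogously in case (b), $\ob{b_p}{s_p}$ has a point outside $\cb{b}{s}$, so $\cb{b_p}{s_p}\not\subseteq\ob{b}{s}$, and Lemma~\ref{outsidedisk} applied to $(\ob{b}{s},\ob{b_p}{s_p})$ produces $\cb{b''}{s''}\subseteq\cb{b}{s}$ disjoint from $\ob{b_p}{s_p}$ with $s''>s-s_p$ strictly; forming $B''$ by deleting $\ob{b_k}{s_k}$ and $\ob{b_p}{s_p}$ and replacing the big disk by $\cb{b''}{s''}$ gives $\delta_1(B'')>\delta_1(B)$. Either way this contradicts $B\in\mathcal{S}_1$.

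The main technical hurdle is verifying $B''\in\Sw$: that $\mu(B'')\leq M$ and $\rho(B'')\leq R$ (routine, since the new centre lies in the convex hull of the replaced centres and the total radius sum only decreases), and, crucially, that $B''$ is partially above $A$ (the single new disk absorbs every containment role played by the disks it replaces; in case (b) the pulled-in disk $\cb{b''}{s''}$ is still inside $\cb{b_0}{s_0}$ and is disjoint from the disks at indices $k$ and $p$, so any $\ob{a_n}{r_n}$ previously absorbed by $\ob{b_k}{s_k}$ or $\ob{b_p}{s_p}$ now falls in $\C\setminus\cb{b''}{s''}$). A secondary subtlety is the degenerate configuration in (b) when $s_p=s_0-s_k$, which prevents the second use of Lemma~\ref{outsidedisk}; in this case the geometric constraint $s_k+s_p\leq s_0$ is saturated, which forces $\cb{b_k}{s_k}$ and $\cb{b_p}{s_p}$ to be mutually tangent at an interior point of $\ob{b_0}{s_0}$, so the case (a) argument applies to the pair $(k,p)$ and delivers the contradiction.
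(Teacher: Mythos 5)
Your proof is correct and follows essentially the same strategy as the paper's: you use the empty-interior hypothesis to find a third disk meeting the region freed by the equality-case replacement, and then combine (or pull in) once more to strictly increase $\delta_1$, contradicting $B\in\mathcal S_1$. The only differences are cosmetic --- you merge the paper's two replacement steps (form $B'\in\mathcal S_1$, then apply Lemma~\ref{safe_replacement} to $B'$) into a single three-disk replacement, and you explicitly handle a couple of degenerate configurations (tangency on $\partial\ob{b_0}{s_0}$, and $s_p=s_0-s_k$) that the paper passes over.
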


\begin{proof}
Let $B=((b_n,s_n))\in \mathcal S_1$. Then, by Theorem~\ref{semi_classicalization}, $B$ is semiclassical. Suppose for contradiction that $B$ is not classical. Then there are two cases summarised in Figure \ref{fig:No_Interior_Proof}. First suppose there exist distinct $k,\ell\in S_B$ with $\cb{b_k}{s_k}\cap \cb{b_\ell}{s_\ell}\neq \emptyset$.  Then by Lemma~\ref{combinedisks}, since $\ob{b_k}{s_k}\cap\ob{b_\ell}{s_\ell}=\emptyset$, there exists an open disk $\ob{a}{r}\supseteq \ob{b_k}{s_k}\cup \ob{b_\ell}{s_\ell}$ with $r=s_k+s_\ell$. By replacing the disks $\ob{b_k}{s_k}$ and $\ob{b_\ell}{s_\ell}$ with $\ob ar$ we obtain a new abstract Swiss cheese $B'=((b_n',s_n'))$ such that $B'\in \mathcal S_1$ (following the proof of Lemma \ref{safe_replacement}). Let $p$ be the index at which the disk $\ob ar$ was inserted. Since $X_{B}$ has empty interior, there exists $m\in S_{B}$ with $m\neq p$ such that $\ob{a}{r}\cap \ob{b_m}{s_m}\neq \emptyset$. Let $q\in S_{B'}$ be such that $\ob{b_q'}{s_q'}=\ob{b_m}{s_m}$. Note that $p\neq q$. Applying Lemma \ref{safe_replacement}(a) to $p,q\in S_{B'}$ and $B'$, we obtain an abstract Swiss cheese $B''\in\Sw$ which has $\delta_1(B'')>\delta_1(B')$. But this is a contradiction.

Now suppose there exists $k\in S_B$ with $\cb{b_k}{s_k}\nsubseteq \ob{b_0}{s_0}$.
Let $\cb bs$ be the closed disk obtained by applying Lemma \ref{outsidedisk} to the disks $\cb{b_0}{s_0}$ and $\ob{b_k}{s_k}$. Since $B$ is semiclassical, we have $s=s_0-s_k$ (as in Figure \ref{smalldisk_extreme_case_fig}). By deleting the disks at indices $0$ and $k$ and inserting $\ob bs$ at index $0$, we obtain a new abstract Swiss cheese $B'=((b_n',s_n'))\in \mathcal S_1$ such that $\delta_1(B')=\delta_1(B)$ (again following the proof of Lemma \ref{safe_replacement}). Since $X_{B}$ has empty interior, there exists $q\in S_{B'}$ such that $\ob{b_q}{s_q}\nsubseteq \cb{b}{s}$. Applying Lemma \ref{safe_replacement}(b) to $q$ and $B'$, we obtain an abstract Swiss cheese $B''\in\Sw$ which has $\delta_1(B'')>\delta_1(B')$. But this is a contradiction.
\end{proof}

Let $B$ be an abstract Swiss cheese satisfying $\delta_1(B)>0$, so that $B$ satisfies the conditions of Theorem~\ref{feinheaththm}. Then we can apply Lemma \ref{nonredundantcheese} to obtain a redundancy-free abstract Swiss cheese $A\in\noninc$ with $X_A=X_B$ and such that $\delta_1(A)\geq\delta_1(B)$. We can then apply the above constructions to $A$. Each abstract Swiss cheese $A'$ from the corresponding non-empty set $\mathcal S_2$ is classical by Theorem \ref{classicalization} and has $X_{A'}\subseteq X_{A}=X_B$ and $\delta_1(A')\geq\delta_1(A)\geq\delta_1(B)$. So we obtain the Feinstein-Heath classicalisation theorem as a corollary of Theorem \ref{classicalization}.

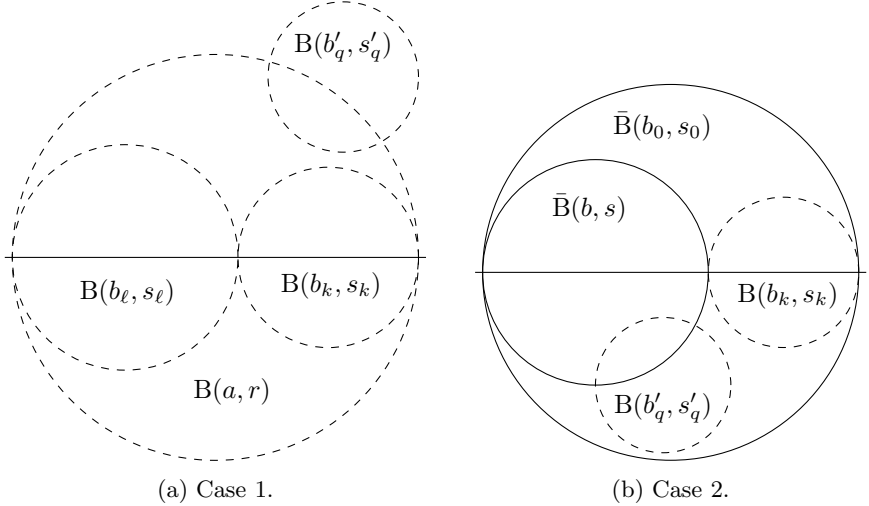
\begin{figure}[htbp]
	\centering
\begin{subfigure}[b]{0.47\textwidth}\centering
    \begin{tikzpicture}
    \draw [dashed] (-1.5,  0) circle [radius= 1.5];
    \draw [dashed] ( 1.2,  0) circle [radius= 1.2];
    \draw [dashed] (-0.3,  0) circle [radius= 2.7];
    \draw [dashed] (1.4, 2.4) circle [radius= 1.0];
    \node at (-1.47,-0.465) {$\ob{b_\ell}{s_\ell}$};
    \node at (1.24,-0.377) {$\ob{b_k}{s_k}$};
    \node at (-0.076,-1.78) {$\ob ar$};
    \node at (1.4,2.8) {$\ob{b'_q}{s'_q}$};
    \draw (-3.1,0) -- (2.5,0);
    \end{tikzpicture}

	\caption{Case 1.}
	\label{fig:expansion}
\end{subfigure}
~
\begin{subfigure}[b]{0.47\textwidth}
	\centering
    \begin{tikzpicture}
    \draw (-1.5,   0) circle [radius= 1.5];
    \draw [dashed] (   1,   0) circle [radius=   1];
    \draw (-0.5,   0) circle [radius= 2.5];
    \draw [dashed] (-0.6, -1.5) circle [radius=0.9];
    \node at (-0.609,1.94) {$\cb{b_0}{s_0}$};
    \node at (1.06,-0.3) {$\ob{b_k}{s_k}$};
    \node at (-1.58,0.855) {$\cb{b}{s}$};
    \node at (-0.6,-1.8) {$\ob{b'_q}{s'_q}$};
    \draw (-3.1,0) -- (2.1,0);
    \end{tikzpicture}

	\caption{Case 2.}
	\label{fig:pulling_in}
\end{subfigure}
\caption{The two cases in the proof of Theorem \ref{non_interior_classicalization}}
\label{fig:No_Interior_Proof}
\end{figure}


\section{Controlled classicalisation}
\label{seclocal}
In this section we discuss some situations in which it is possible to make a Swiss cheese classical without changing certain disks. This process we call ``controlled classicalisation''.

Recall that, for $E\subseteq\C$ and an abstract Swiss cheese $A=((a_n,r_n))$, the set $H_A(E)$ is the set of all $n\in S_A$ such that $\cb{a_n}{r_n}\cap E\neq\emptyset$.

\begin{lemma}\label{diskintsums}
Let $U$ be a non-empty open subset of $\C$. For each $m\in\No,$ let $A^{(m)}=((a_n^{(m)},r_n^{(m)}))\in\scs$ and suppose that $A^{(m)}\to A=((a_n,r_n))\in \mathcal F$ as $m\to\infty$. Then $\rho_U(A)\leq \liminf_{m\to\infty}\rho_U(A^{(m)})$.
\end{lemma}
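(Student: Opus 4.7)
The plan is to apply Fatou's lemma for series (counting measure on $\N$) to the non\-negative integrands
\[
f_m(n)=r_n^{(m)}\,\chi_{H_{A^{(m)}}(U)}(n),\qquad f(n)=r_n\,\chi_{H_A(U)}(n),
\]
so that $\rho_U(A^{(m)})=\sum_n f_m(n)$ and $\rho_U(A)=\sum_n f(n)$. Fatou then gives the desired inequality once we verify the pointwise bound $f(n)\leq\liminf_{m\to\infty}f_m(n)$ for every $n\in\N$.

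The pointwise bound is trivial when $n\notin H_A(U)$, since the left side is $0$. So the substantive step is: if $n\in H_A(U)$, then $n\in H_{A^{(m)}}(U)$ for all sufficiently large $m$. Here is where I use openness of $U$. Fix $n\in H_A(U)$, so $r_n>0$ and there exists $z\in\cb{a_n}{r_n}\cap U$. Since $U$ is open, we can replace $z$ by an interior point of $\cb{a_n}{r_n}$ still lying in $U$: because $r_n>0$, the points $z_\delta=a_n+(1-\delta)(z-a_n)$ satisfy $\abs{z_\delta-a_n}=(1-\delta)r_n<r_n$ and $\abs{z_\delta-z}=\delta r_n\to 0$ as $\delta\to 0^+$, so for small enough $\delta>0$ we have $z_\delta\in U$ and $\abs{z_\delta-a_n}<r_n$. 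Now convergence $A^{(m)}\to A$ in the product topology gives $a_n^{(m)}\to a_n$ and $r_n^{(m)}\to r_n$, so
\[
\abs{z_\delta-a_n^{(m)}}\leq\abs{z_\delta-a_n}+\abs{a_n-a_n^{(m)}}\longrightarrow\abs{z_\delta-a_n}<r_n,
\]
while $r_n^{(m)}\to r_n$. Hence for all sufficiently large $m$, $z_\delta\in\cb{a_n^{(m)}}{r_n^{(m)}}\cap U$ and $r_n^{(m)}>0$, so $n\in H_{A^{(m)}}(U)$ and $f_m(n)=r_n^{(m)}\to r_n=f(n)$, as required.

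Combining this pointwise lim inf bound with Fatou's lemma,
\[
\rho_U(A)=\sum_{n=1}^\infty f(n)\leq\sum_{n=1}^\infty \liminf_{m\to\infty}f_m(n)\leq \liminf_{m\to\infty}\sum_{n=1}^\infty f_m(n)=\liminf_{m\to\infty}\rho_U(A^{(m)}).
\]

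The only real obstacle is the boundary case where the sole intersection point of $\cb{a_n}{r_n}$ with $U$ lies on the circle $\{|z-a_n|=r_n\}$: there, a small perturbation of the disk can destroy the intersection, and this is precisely where openness of $U$ (together with $r_n>0$) is used to slide inward into a strictly interior intersection point that is robust under perturbation. Everything else is a direct application of Fatou and the definition of the product topology.
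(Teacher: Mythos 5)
Your proof is correct and follows essentially the same route as the paper's: apply Fatou's lemma for series to the radii multiplied by the indicator of membership in $H_{A^{(m)}}(U)$, after checking that each $n\in H_A(U)$ eventually lies in $H_{A^{(m)}}(U)$ because $U$ is open. The paper states that last fact without justification, whereas you supply the inward-sliding argument explicitly; this is a welcome addition (minor quibble: $\abs{z_\delta-a_n}=(1-\delta)\abs{z-a_n}\leq(1-\delta)r_n$ rather than equality, which changes nothing).
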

\begin{proof}
Since $U$ is open and $A^{(m)}\to A$ as $m\to\infty$, for each $k\in H_A(U)$ there exists $m_0\in\No$ such that, for all $m\geq m_0$, we have $k\in S_{A^{(m)}}$ and
\[
\cb{a_k^{(m)}}{r_k^{(m)}}\cap U\neq\emptyset\,.
 \]
Let $\chi_m$ denote the characteristic function of $H_{A^{(m)}}(U)\cap H_A(U)$. Then $\chi_m$ converges pointwise to $\chi:=\chi_{H_A(U)}$ as $m\to\infty$. Since $r_k^{(m)}\to r_k$ as $m\to\infty$ for each $k$, by Fatou's lemma for series, we have
\[
\rho_U(A)=\sum_{n=1}^\infty\chi(n)r_n\leq\liminf_{m\to\infty}\sum_{n=1}^\infty\chi_m(n)r_n^{(m)}\leq\liminf_{m\to\infty}\rho_U(A^{(m)}),
\]
as required.
\end{proof}

For the rest of this section $A=((a_n,r_n))\in\noninc$ will be a fixed redundancy-free abstract Swiss cheese. Note that both $\rho(A)$ and $\mu(A)$ are finite and $r_n\leq \rho(A)/n$ for all $n\in\N$. We define the (classical) {\em error set of $A$} to be
\[
E(A):=\bigcup\limits_{\substack{m,n\in S_A\\m\neq n}}\bigg(\cb{a_m}{r_m}\cap \cb{a_n}{r_n}\bigg)\cup\bigcup\limits_{n\in S_A}((\C\setminus\ob{a_0}{r_0})\cap \cb{a_n}{r_n}).
\]
Note that if $E(A)\subseteq\ob{a_0}{r_0}$ then $\cb{a_n}{r_n}\subseteq\ob{a_0}{r_0}$ for all $n\in S_A$. We aim to prove that, under suitable conditions, we can classicalise $A$ while leaving many of the open disks unchanged.

As in Section \ref{secclassicalisation}, we seek to construct a compact subset of $\scs$ on which the function $\delta_1$ can be maximised and then the function $\delta_2$ minimised to give a suitable classical abstract Swiss cheese.

In the rest of this paper, we will frequently need to consider indexed collections of pairs of sets of the following form. Let $I\subseteq\N$ be non-empty. Let $\mathcal C=((K_n,U_n))_{n\in I}$, where each $K_n$ is a compact plane set and each $U_n$ is an open set with $K_n\subseteq U_n$. We call such an indexed collection a {\em controlling collection of pairs}. In the special case where $I$ has only one member, we say $\mathcal C$ is a {\em controlling pair} and write $\mathcal C=(K,U)$.

\begin{definition} \label{locally_partially_above}
Let $\mathcal C=((K_n,U_n))_{n\in I}$ be a controlling collection of pairs. Define
\[
V(\mathcal C):=\bigcup\limits_{n\in I}U_n,\quad F(\mathcal C):=\bigcup\limits_{n\in I}K_n.
\]
Let $\La C$ denote the set of all $B=((b_n,s_n))\in\noninc(\mu(A),\rho(A))$ such that$:$
\begin{enumerate}
 \item for each $(K,U)\in\mathcal C$ we have $\rho_U(B)\leq\rho_U(A);$
 \item $\cb{b_0}{s_0}=\cb{a_0}{r_0};$
 \item for all $k\in S_A$ with $\cb{a_k}{r_k}\cap V(\mathcal C)=\emptyset$ there exists $\ell\in S_B$ such that $\ob{b_\ell}{s_\ell}=\ob{a_k}{r_k};$
 \item for each $n\in I$ and for all $k\in S_A$ with $\cb{a_k}{r_k}\cap U_n\neq\emptyset:$
 \begin{enumerate}
  \item there exists $\ell\in S_B$ with $\ob{b_\ell}{s_\ell}=\ob{a_k}{r_k};$ or
  \item there exists $\ell\in H_B(K_n)$ with $\ob{a_k}{r_k}\subseteq\ob{b_\ell}{s_\ell}$.
 \end{enumerate}
\end{enumerate}
\end{definition}

Note that $A\in\La C$, and if $B\in\La C$ then $B$ is partially above $A$. Thus if $B\in\La C$ then $X_B\subseteq X_A$. The properties (a)-(d) reflect the properties we desire for the final abstract Swiss cheese. We will use the open sets $U$ to bound the error set $E(A)$. Under some technical assumptions, conditions (c) and (d) ensure that abstract Swiss cheeses maximising $\delta_1$ in $\La C$ have the property that any open disk which lies outside $V(\mathcal C)$ is the same as an open disk from $A$.

We first require some preliminary lemmas. The following lemma is probably well-known and can be proved using a Hausdorff metric argument, but we include an elementary proof for the convenience of the reader.

\begin{lemma}\label{limitdiskintersection}
Let $K$ be a compact plane set. Let $(z_n)$ be a sequence in $\C,$ and let $(t_n)$ be a sequence in $\Rp$. Suppose that $\cb{z_n}{t_n}\cap K\neq\emptyset$ for all $n$, and that $z_n\to z$ and $t_n\to t$ as $n\to\infty$. Then $\cb zt\cap K\neq\emptyset$.
\end{lemma}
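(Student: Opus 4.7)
My plan is to use a straightforward sequential compactness argument. For each $n$, the hypothesis that $\cb{z_n}{t_n}\cap K\neq\emptyset$ lets me pick a point $w_n\in\cb{z_n}{t_n}\cap K$, so that $\abs{w_n-z_n}\leq t_n$ for every $n$. The sequence $(w_n)$ lies in the compact set $K$, so by the Bolzano-Weierstrass property there is a subsequence $(w_{n_k})$ converging to some $w\in K$.

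It then remains to verify $w\in\cb{z}{t}$. Using that $z_n\to z$, $t_n\to t$, and $\abs{w_{n_k}-z_{n_k}}\leq t_{n_k}$, I take the limit as $k\to\infty$ on both sides of this inequality. Continuity of the modulus gives $\abs{w-z}=\lim_{k\to\infty}\abs{w_{n_k}-z_{n_k}}\leq\lim_{k\to\infty} t_{n_k}=t$, so $w\in\cb{z}{t}\cap K$, which is therefore non-empty.

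There is no real obstacle here; the argument is just sequential compactness of $K$ combined with continuity of the distance function, and no appeal to Hausdorff metric machinery is needed. The only subtlety worth noting is that one must choose the witness $w_n$ inside the intersection (which is possible because it is non-empty by hypothesis), and use the compactness of $K$ rather than of the disks, since the centres and radii of the disks are only known to converge, not to be uniformly bounded a priori — though in fact boundedness of $(z_n)$ and $(t_n)$ is immediate from their convergence.
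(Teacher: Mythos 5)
Your argument is correct and is essentially identical to the paper's own proof: both choose a witness $w_n\in\cb{z_n}{t_n}\cap K$, extract a convergent subsequence using compactness of $K$, and pass to the limit in the inequality $\abs{w_{n_k}-z_{n_k}}\leq t_{n_k}$. No issues.
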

\begin{proof}For each $n\in\No$ there exists a point $w_n\in\cb{z_n}{t_n}\cap K$. Now since $(w_n)$ is a sequence in $K$ there is a convergent subsequence $(w_{n_k})$ converging to a point $w\in K$. For each $k\in\No,$ we have $w_{n_k}\in\cb{z_{n_k}}{t_{n_k}}$ so that $\abs{w_{n_k}-z_{n_k}}\leq t_{n_k}$. Hence, taking the limit as $k\to\infty$, we have $\abs{w-z}\leq t$ so that $w\in\cb{z}{t}\cap K$ as required.
\end{proof}

We now prove that the space $\La C$ is a compact subspace of $\scs$ for an arbitrary countable collection $\mathcal C$ of pairs $(K,U)$ where $K$ is a compact plane set and $U$ an open neighbourhood of $K$.

\begin{lemma}\label{Lcompactness}
Let $\mathcal C:=((K_n,U_n))_{n\in I}$ be a controlling collection of pairs. Then the set $\La C\subseteq\scs$ is compact.
\end{lemma}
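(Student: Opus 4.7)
The plan is to use the compactness criterion stated in the paper just before Lemma \ref{discrepancy_function}: since $\La C \subseteq \noninc(\mu(A),\rho(A))$ by definition, and since condition (b) of Definition \ref{locally_partially_above} forces $(b_0,s_0)=(a_0,r_0)$ for every $B\in\La C$ (so the $0$-th coordinate projection is constant, hence bounded), it suffices to prove that $\La C$ is closed in $\noninc(\mu(A),\rho(A))$. So I would fix a sequence $(B^{(m)})$ in $\La C$ converging (in the product topology) to some $B=((b_n,s_n))\in\noninc(\mu(A),\rho(A))$ and verify the four defining conditions for $B$.

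Conditions (a) and (b) are immediate: (b) by taking coordinatewise limits of the constant sequences $b_0^{(m)}=a_0$, $s_0^{(m)}=r_0$; (a) by applying Lemma \ref{diskintsums} to each $(K,U)\in\mathcal C$ to obtain $\rho_U(B)\leq\liminf_m\rho_U(B^{(m)})\leq\rho_U(A)$. For (c), fix $k\in S_A$ with $\cb{a_k}{r_k}\cap V(\mathcal C)=\emptyset$: for every $m$ there is some $\ell_m\in S_{B^{(m)}}$ with $(b_{\ell_m}^{(m)},s_{\ell_m}^{(m)})=(a_k,r_k)$; since $s_{\ell_m}^{(m)}=r_k>0$ and $(s_n^{(m)})$ is non-increasing with sum at most $\rho(A)$, the bound $s_n^{(m)}\leq\rho(A)/n$ gives $\ell_m\leq\rho(A)/r_k$. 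Hence some index $\ell$ occurs for infinitely many $m$, and passing to a subsequence with $\ell_m=\ell$ yields $(b_\ell,s_\ell)=(a_k,r_k)$ by coordinatewise convergence.

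The main obstacle — and the only step that really uses the full setup — is condition (d). Fix $n\in I$ and $k\in S_A$ with $\cb{a_k}{r_k}\cap U_n\neq\emptyset$. For each $m$, condition (d) provides some $\ell_m$ witnessing either (i) or (ii); by passing to a subsequence, one of these alternatives occurs for all $m$. The case (i) is handled exactly as in the proof of (c) above. For case (ii) we have $\ell_m\in H_{B^{(m)}}(K_n)$ with $\ob{a_k}{r_k}\subseteq\ob{b_{\ell_m}^{(m)}}{s_{\ell_m}^{(m)}}$; by Lemma \ref{lem1} this forces $s_{\ell_m}^{(m)}\geq r_k$, so again $\ell_m\leq\rho(A)/r_k$ and we can pass to a further subsequence with $\ell_m=\ell$ constant. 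Coordinatewise convergence gives $\abs{a_k-b_\ell}\leq s_\ell-r_k$ in the limit, and since $s_\ell\geq r_k>0$ this upgrades (via Lemma \ref{lem1}) to the open inclusion $\ob{a_k}{r_k}\subseteq\ob{b_\ell}{s_\ell}$, with $\ell\in S_B$.

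The remaining subtlety in case (ii) is verifying $\ell\in H_B(K_n)$, i.e.\ that the limiting closed disk still meets the compact set $K_n$. This is precisely what Lemma \ref{limitdiskintersection} is designed to handle: applied to the sequence $(b_\ell^{(m)},s_\ell^{(m)})\to(b_\ell,s_\ell)$ together with the hypothesis $\cb{b_\ell^{(m)}}{s_\ell^{(m)}}\cap K_n\neq\emptyset$, it gives $\cb{b_\ell}{s_\ell}\cap K_n\neq\emptyset$, completing the verification of (d) and hence of the closedness of $\La C$.
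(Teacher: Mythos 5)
Your proof is correct and follows essentially the same route as the paper: reduce to closedness in $\noninc(\mu(A),\rho(A))$, use Lemma \ref{diskintsums} for (a), the index bound $\ell_m\leq\rho(A)/r_k$ plus pigeonhole for (c) and (d), and Lemma \ref{limitdiskintersection} to preserve the intersection with $K_n$ in the limit. The only (harmless) difference is organizational: you pigeonhole first on which alternative of (d) is witnessed, whereas the paper first makes a preferred choice of witness index and then splits cases after fixing the recurring index.
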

\begin{proof}
{It is sufficient to show that $\La C$ is closed in $\noninc(\mu(A),\rho(A))$, since the $0$-th coordinate projection is clearly bounded on $\La C$}. For each $m\in\No$, let $A^{(m)}=((a_n^{(m)},r_n^{(m)}))_{n=0}^\infty\in\La C$. Let $B=((b_n,s_n))$ and suppose that $A^{(m)}\to B\in\noninc(\mu(A),\rho(A))$ as $m\to\infty$; we need to show that $B\in\La C$.

By Lemma \ref{diskintsums} we see that $B$ also satisfies (a), and it is immediate that (b) is also satisfied.

It remains to prove (c) and (d) hold for $B$. Fix $k\in S_A$. Suppose that $\cb{a_k}{r_k}\cap V(\mathcal C)=\emptyset$. Since, for each $m\in\No$, we have $A^{(m)}\in\La C$ it follows that for each $m$ there exists an integer $\ell_m$ such that $\ob{a_k}{r_k}=\ob{a_{\ell_m}^{(m)}}{r_{\ell_m}^{(m)}}$. Now since $r_k^{(m)}=r_k$ for each $m$ we have $1\leq\ell_m\leq \rho(A)/r_k$ for all $m$. But then there must exist an integer $1\leq p\leq \rho(A)/r_k$ such that $\ell_k=p$ infinitely often so we can find a subsequence $(A^{(m_j)})_j$ such that $\ell_{m_j}=p$ for all $j$. Since $\ob{a_k}{r_k}=\ob{a_k^{(m_j)}}{r_k^{(m_j)}}$ for all $j$ and $A^{(m_j)}\to B$ as $j \to \infty$, it follows that $\ob{a_k}{r_k}=\ob{b_p}{s_p}$. This proves that (c) holds for $B$.

Now suppose that $\cb{a_k}{r_k}\cap U\neq\emptyset$ for some $(K,U)\in\mathcal C$. As above, for each $m\in\No$ there exists an integer $\ell_m$ such that $\ob{a_k}{r_k}\subseteq\ob{a_{\ell_m}^{(m)}}{r_{\ell_m}^{(m)}}$ and $r_{\ell_m}^{(m)}\geq r_k$. We choose $\ell_m$ as follows: if in $A^{(m)}$ there is an open disk $\ob{a}{r} = \ob{a_k}{r_k}$ then we pick $\ell_m$ to be the index of that open disk, otherwise we choose $\ell_m$ to be the index of an open disk $\ob{a}{r}$ that properly contains $\ob{a_k}{r_k}$ and $\cb{a}{r}\cap F(\mathcal C)\neq \emptyset$.
Hence we have $1\leq\ell_m\leq \rho(A)/r_k$ for all $m$ and so there exists an integer $1\leq p\leq \rho(A)/r_k$ such that $\ell_m=p$ infinitely often. By considering a subsequence we can assume that $\ell_m=p$ for all $m$.
If $\ob{a_{p}^{(m)}}{r_{p}^{(m)}}=\ob{a_k}{r_k}$ holds for infinitely many $m$ then there is a subsequence $(A^{(m_j)})_j$ such that $\ob{a_k}{r_k}=\ob{a_p^{(m_j)}}{r_p^{(m_j)}}$ for all $j$. Since $A^{(m_j)}\to B$ as $j\to\infty$ it follows that $\ob{a_k}{r_k}=\ob{b_p}{s_p}$.
If $\ob{a_k}{r_k}=\ob{a_p^{(m)}}{r_p^{(m)}}$ for only finitely many $m$ then we must have \[
\ob{a_k}{r_k}\subseteq\ob{a_p^{(m)}}{r_p^{(m)}}\quad\text{and}\quad \cb{a_p^{(m)}}{r_p^{(m)}}\cap K\neq\emptyset
\]
for infinitely many $m$. Then there exists a subsequence $(A^{(m_j)})_j$ such that $\ob{a_k}{r_k}\subseteq\ob{a_p^{(m_j)}}{r_p^{(m_j)}}$ and $\cb{a_p^{(m_j)}}{r_p^{(m_j)}}\cap K\neq\emptyset$ for all $j$. But then $\ob{a_k}{r_k}\subseteq\ob{b_p}{s_p}$ and, by Lemma \ref{limitdiskintersection}, we have $\cb{b_p}{s_p}\cap K\neq\emptyset$. This proves that (d) holds for $B$.

Thus we have proved that $B\in\La C$ and hence $\La C$ is compact.
\end{proof}

We are interested in those abstract Swiss cheeses $B$ in a space $\La C$ on which the discrepancy function $\delta_1$ is maximised. These abstract Swiss cheeses have some desirable properties. Let $\LM{C}$ denote the subset of $\La C$ of all abstract Swiss cheeses where $\delta_1$ achieves its maximum. Since $\La C$ is non-empty and compact, $\LM{C}$ is non-empty and compact. Recall that $A\in\noninc$ is assumed to be redundancy-free.

\begin{lemma}\label{diskscorrespondence}
Let $\mathcal C:=((K_n,U_n))_{n\in I}$ be a controlling collection of pairs. Let $B=((b_n,s_n))\in\LM C$. Then $B$ has the following properties.
\begin{enumerate}
  \item For all $k,\ell\in S_B$ with $k\neq\ell$, we have $\ob{b_k}{s_k}\neq\ob{b_\ell}{s_\ell}$.
  \item For each $k\in S_B$, there exists $\ell\in S_A$ such that $\ob{a_\ell}{r_\ell}\subseteq\ob{b_k}{s_k}$. Moreover$,$ if $\cb{b_k}{s_k}\cap F(\mathcal C)=\emptyset$ then this $\ell\in S_A$ is unique$,$ and we have $\ob{b_k}{s_k}=\ob{a_\ell}{r_\ell}$.
  \item Let $E$ be a fixed subset of $\C$. Let $H_1:=H_B(E)\setminus H_B(V(\mathcal C))$ and let $H_2:=H_A(E)\setminus H_A(V(\mathcal C))$. There exists a bijection $\sigma:H_1\to H_2$ satisfying the following condition$:$ for each $k\in H_1$ and $\ell\in H_2,$ we have $\sigma(k)=\ell$ if and only if $\ob{b_k}{s_k}=\ob{a_\ell}{s_\ell}$. In particular$,$
      \[
      \sum_{n\in H_1}s_n=\sum_{n\in H_2}r_n.
      \]
\end{enumerate}
\end{lemma}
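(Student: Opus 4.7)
The plan is to prove (a), (b), (c) in sequence, each time exploiting the maximality of $\delta_1$ on $\La C$: if the desired conclusion fails, I construct a modification $B'$ of $B$ that lies in $\La C$ with $\delta_1(B')>\delta_1(B)$, contradicting $B\in\LM C$.

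For (a), if $\ob{b_k}{s_k}=\ob{b_\ell}{s_\ell}$ for distinct $k,\ell\in S_B$, delete the disk at the larger of the two indices to obtain $B'\in\noninc$. Each condition of Definition \ref{locally_partially_above} is preserved, because any disk of $A$ whose witness in $B$ was the deleted disk is witnessed equally well by its retained duplicate. Hence $B'\in\La C$ and $\delta_1(B')=\delta_1(B)+s_k>\delta_1(B)$, giving a contradiction.

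For the first claim of (b), assume $k\in S_B$ and no $\ob{a_j}{r_j}$ with $j\in S_A$ is contained in $\ob{b_k}{s_k}$. Delete the disk at index $k$ to produce $B'\in\noninc(\mu(A),\rho(A))$. Condition (a) of Definition \ref{locally_partially_above} only involves radius sums, which do not increase, while conditions (c) and (d) could fail only if some $\ob{a_j}{r_j}$ were matched to (i.e., equal to) or contained in $\ob{b_k}{s_k}$; either alternative forces $\ob{a_j}{r_j}\subseteq\ob{b_k}{s_k}$, contradicting the hypothesis. So $B'\in\La C$ with $\delta_1(B')>\delta_1(B)$, a contradiction. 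For the second claim, assume additionally $\cb{b_k}{s_k}\cap F(\mathcal C)=\emptyset$: then $k\notin H_B(K_n)$ for any $n\in I$, so $\ob{b_k}{s_k}$ cannot play the containment role of (d)(ii). If moreover $\ob{b_k}{s_k}\neq\ob{a_j}{r_j}$ for every $j\in S_A$, then it cannot play the equality role of (c) or (d)(i) either, and deletion once more yields a contradiction. Thus $\ob{b_k}{s_k}=\ob{a_\ell}{r_\ell}$ for some $\ell\in S_A$; uniqueness among those $\ell$ with $\ob{a_\ell}{r_\ell}\subseteq\ob{b_k}{s_k}$ then follows from the redundancy-free property of $A$, since any two indices $\ell_1\neq\ell_2$ in $S_A$ with $\ob{a_{\ell_1}}{r_{\ell_1}}\subseteq\ob{a_\ell}{r_\ell}$ would violate redundancy-freeness.

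For (c), take $k\in H_1$: the condition $\cb{b_k}{s_k}\cap V(\mathcal C)=\emptyset$ implies $\cb{b_k}{s_k}\cap F(\mathcal C)=\emptyset$, so (b) supplies a unique $\sigma(k)\in S_A$ with $\ob{b_k}{s_k}=\ob{a_{\sigma(k)}}{r_{\sigma(k)}}$, and clearly $\sigma(k)\in H_2$. Injectivity of $\sigma$ follows from (a). For surjectivity, given $\ell\in H_2$, condition (c) of Definition \ref{locally_partially_above} applied to $A$ produces $k\in S_B$ with $\ob{b_k}{s_k}=\ob{a_\ell}{r_\ell}$, and necessarily $k\in H_1$ with $\sigma(k)=\ell$. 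The sum equality is immediate since $\sigma$ pairs indices with equal radii. The main obstacle throughout is the bookkeeping in (b): one must enumerate precisely the roles $\ob{b_k}{s_k}$ could play in conditions (c) and (d) of Definition \ref{locally_partially_above} and confirm that, under the stated hypothesis, none is actually filled by $\ob{b_k}{s_k}$, so that deletion is safe.
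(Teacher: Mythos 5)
Your proposal is correct and follows essentially the same strategy as the paper: in each part, if the conclusion fails, delete the offending disk (checking that conditions (a)--(d) of Definition \ref{locally_partially_above} survive because that disk cannot serve as a witness), contradicting the maximality of $\delta_1$ on $\La{C}$, and then use redundancy-freeness of $A$ for uniqueness and parts (a) and (c) of Definition \ref{locally_partially_above} for the bijection in (c). The only cosmetic difference is in the second claim of (b), where you argue directly that if no disk of $A$ equals $\ob{b_k}{s_k}$ then deletion is safe, whereas the paper first invokes redundancy-freeness to reduce to that case; the two arguments are interchangeable.
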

\begin{proof}
(a) If $k,\ell\in S_B$ with $k\neq\ell$ such that $\ob{b_k}{s_k}=\ob{b_\ell}{s_\ell}$ then we can obtain an abstract Swiss cheese $B'$ by deleting the disk at index $\ell$ which has $\delta_1(B')>\delta_1(B)$. It is easy to see that $B'\in\La C$, which is a contradiction.

(b) Let $k\in S_B$. Assume, for contradiction, there does not exist $\ell\in S_A$ such that $\ob{a_\ell}{r_\ell}\subseteq\ob{b_k}{s_k}$. Then we can delete the disk at index $k$ from $B$ to obtain an abstract Swiss cheese $B'$ which has $\delta_1(B')>\delta_1(B)$. It is clear that $B'\in\La C$, which contradicts the maximality of $\delta_1(B)$. Thus there exists $\ell\in S_A$ such that $\ob{a_\ell}{r_\ell}\subseteq\ob{b_k}{s_k}$.

Now suppose, in addition, that $\cb{b_k}{s_k}\cap F(\mathcal C)=\emptyset$. We show that the $\ell\in S_A$ found above with $\ob{a_\ell}{r_\ell}\subseteq\ob{b_k}{s_k}$ is unique and that we have $\ob{a_\ell}{r_\ell}=\ob{b_k}{s_k}$. Assume, for contradiction, that $\ob{a_\ell}{r_\ell}\neq\ob{b_k}{s_k}$. Then, since $A$ is redundancy-free, we must have $\ob{a_m}{r_m}\neq\ob{b_k}{s_k}$ for all $m\in S_A$. We claim that the abstract Swiss cheese $B'$ obtained by deleting the disk at index $k$ from $B$ has $B'\in\La C$; this will lead to a contradiction.

Clearly $B'\in\noninc(\mu(A),\rho(A))$ and it is also clear that $B'$ satisfies conditions (a) and (b) of Definition \ref{locally_partially_above}(a). Since $\ob{a_m}{r_m}\neq\ob{b_k}{s_k}$ for all $m\in S_A$, it follows that \ref{locally_partially_above}(c) remains true for $B'$. Similarly, since $\cb{b_k}{s_k}\cap F(\mathcal C)=\emptyset$, \ref{locally_partially_above}(d) remains true for $B'$. This proves our claim.

But now $\delta_1(B')>\delta_1(B)$, which contradicts the maximality of $\delta_1(B)$. Thus we must have $\ob{a_\ell}{r_\ell}=\ob{b_k}{s_k}$. The uniqueness of $\ell$ follows from the fact that $A$ is redundancy-free.

(c) Note that if, for some $k\in S_B$ and $\ell\in S_A$, $\ob{b_k}{s_k}=\ob{a_\ell}{r_\ell}$ then $k\in H_1$ if and only if $\ell\in H_2$. Combining this with (b), for each $k\in H_1$ there exists a unique $\ell\in H_2$ such that $\ob{b_k}{s_k}=\ob{a_\ell}{r_\ell}$. Thus we may define $\sigma(k)=\ell$ for such $k,\ell$.
We must show that $\sigma$ is a bijection. By (a), $\sigma$ is injective. Let $\ell\in H_2$. By \ref{locally_partially_above}(c), there exists $k\in S_B$ with $\ob{b_k}{s_k}=\ob{a_\ell}{r_\ell}$. By the remark above, $k\in H_1$, and so $\sigma(k)=\ell$. This proves that $\sigma$ is surjective. It is now immediate that $\sum_{n\in H_1}s_n=\sum_{n\in H_2}r_n$. This completes the proof.
\end{proof}

In order to obtain a controlled classicalisation theorem, we need to impose some technical conditions on $\mathcal C$. Recall that if $E\subseteq \C$ is non-empty  and $z\in\C$ then we define the distance of $z$ to $E$ by $\dist(z,E):=\inf\{\abs{z-x}:x\in E\}$. For a non-empty compact set $K\subseteq\C$ and positive real number $M$ we define $U(K,M):=\{z\in\C:\dist(z,K)< M\}$.

\begin{lemma}\label{newinLforseq}
Let $I\subseteq\N$ be non-empty. Let $(K_n)_{n\in I}$ be a collection of compact plane sets and let $(M_n)_{n\in I}$ be a collection of positive real numbers. For each $n\in I,$ let $U_n:=U(K_n,M_n)$. Suppose that $\rho_{U_k}(A)<M_k/2$ and $U_k\subseteq\ob{a_0}{r_0}$ for all $k\in I$ and suppose that $U_k\cap U_\ell=\emptyset$ for all distinct $k,\ell\in I$.
Let $\mathcal C$ be the controlling collection $((K_n,U_n))_{n\in I}$. Let $B=((b_n,s_n))\in\La C$ and fix $m\in I$.
Suppose there exists $k,\ell\in S_B$ with $k\neq \ell$ such that $\cb{b_k}{s_k}\cap K_m\neq\emptyset$ and $\cb{b_k}{s_k}\cap\cb{b_\ell}{s_\ell}\neq\emptyset$. Then there exists $B'\in\La C$ such that either $\delta_1(B')>\delta_1(B)$ or $\delta_1(B')=\delta_1(B)$ and $\delta_2(B')<\delta_2(B)$.
\end{lemma}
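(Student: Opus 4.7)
The approach I would take is to mimic the proof of Lemma \ref{safe_replacement}(a): apply Lemma \ref{combinedisks} to form the smallest open disk $\ob b s$ containing $\ob{b_k}{s_k} \cup \ob{b_\ell}{s_\ell}$, and let $B'$ be obtained from $B$ by replacing these two disks with $\ob b s$ in the sense of Definition \ref{disk_mani}(c). Since $s \leq s_k + s_\ell$ with equality precisely when $\ob{b_k}{s_k} \cap \ob{b_\ell}{s_\ell} = \emptyset$, the claimed dichotomy for $\delta_1(B')$ and $\delta_2(B')$ then falls out by the same arithmetic as in the proof of Lemma \ref{safe_replacement}(a). The real work is to check that $B' \in \La C$, which is where the hypotheses on $\rho_{U_k}(A)$ and the disjointness of the $U_n$'s are needed.

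The main obstacle, in my view, is the geometric step of showing $\cb b s \subseteq U_m$. Since $\cb{b_k}{s_k} \cap K_m \neq \emptyset$ we have $k \in H_B(U_m)$, hence $s_k \leq \rho_{U_m}(B) \leq \rho_{U_m}(A) < M_m/2$; picking any $w \in \cb{b_k}{s_k} \cap \cb{b_\ell}{s_\ell}$ and any $z \in \cb{b_k}{s_k} \cap K_m$ gives $\abs{w-z} \leq 2 s_k < M_m$, so $w \in U_m$, whence also $\ell \in H_B(U_m)$ and $s_k + s_\ell \leq \rho_{U_m}(B) < M_m/2$. From $\cb{b_k}{s_k} \subseteq \cb b s$, Lemma \ref{lem1} yields $\abs{b - b_k} \leq s - s_k \leq s_\ell$; combined with $\dist(b_k, K_m) \leq s_k$ this gives $\dist(b, K_m) \leq s_k + s_\ell$, and so every point of $\cb b s$ lies within $s + (s_k + s_\ell) \leq 2(s_k + s_\ell) < M_m$ of $K_m$, as required.

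With this containment in hand, the conditions of Definition \ref{locally_partially_above} for $B'$ are routine. Condition (b) is immediate since index $0$ is untouched. For (a), the disjointness $U_n \cap U_m = \emptyset$ forces $\cb b s, \cb{b_k}{s_k}, \cb{b_\ell}{s_\ell}$ to miss $U_n$ whenever $n \neq m$, so $\rho_{U_n}(B') = \rho_{U_n}(B) \leq \rho_{U_n}(A)$, while at $n = m$ one has $\rho_{U_m}(B') = \rho_{U_m}(B) - s_k - s_\ell + s \leq \rho_{U_m}(A)$. For (c), any disk of $A$ whose closure misses $V(\mathcal C)$ has its representative in $B$ disjoint from $U_m$, hence distinct from $\ob{b_k}{s_k}$ and $\ob{b_\ell}{s_\ell}$, so it survives in $B'$. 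For (d) at index $n$, a representative of $\ob{a_p}{r_p}$ in $B$ that happens to be $\ob{b_k}{s_k}$ or $\ob{b_\ell}{s_\ell}$ is, by the same diameter-bound argument, only possible when $n = m$; in that case $\ob b s$ serves as the new representative, still covering $\ob{a_p}{r_p}$ and still meeting $K_m$. This completes the verification and hence the lemma.
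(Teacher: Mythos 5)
Your proposal is correct and follows essentially the same route as the paper: replace the two offending disks by the minimal covering disk from Lemma \ref{combinedisks}, show $\cb{b}{s}\subseteq U_m$ via the bound $s_k+s_\ell\leq\rho_{U_m}(B)<M_m/2$, and use the disjointness of the $U_n$ to verify conditions (a)--(d) of Definition \ref{locally_partially_above}. Your argument that $\ell\in H_B(U_m)$ (via the point $w$ and the diameter bound $2s_k<M_m$) actually spells out a step the paper leaves implicit.
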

\begin{proof}
Let $\ob bs$ be the disk obtained by the application of Lemma \ref{combinedisks} to the disks $\ob{b_k}{s_k}$ and $\ob{b_\ell}{s_\ell}$. Let $B'=((b_n',s_n'))$ be an abstract Swiss cheese obtained from $B$ by replacing the disks at indices $k,\ell$ with the disk $\ob bs$. Since $B\in\La C$ we have $\rho_{U_m}(B)\leq\rho_{U_m}(A)<M_m/2$, so that $s\leq s_k+s_\ell<M_m/2$. Since $\cb{b_k}{s_k}\cap K_m\neq\emptyset$, we must have $\cb bs\subseteq U_m$ and hence $\cb bs\cap U_n=\emptyset$ for all $n\in I$ with $n\neq m$.

It is clear now that either $\delta_1(B')>\delta_1(B)$, when $s<s_k+s_\ell$, or we have $\delta_1(B')=\delta_1(B)$ and $\delta_2(B')<\delta_2(B)$, when $s=s_k+s_\ell$, so it remains to show that $B'\in\La C$. By construction, and since $\cb bs\subseteq U_m$ and $\cb bs\cap U_n=\emptyset$ for $n\in I$ with $n\neq m$, we have $B'\in\noninc(\mu(A),\rho(A))$ and satisfies (a) and (b) in Definition~\ref{locally_partially_above}.

Fix $j\in S_A$. If $\cb{a_j}{r_j}\cap V(\mathcal C)=\emptyset$, then there exists $p\in S_B$ with $p\neq k,\ell$ and $\ob{b_p}{s_p}=\ob{a_j}{r_j}$. Hence there is a $p'\in S_{B'}$ such that $\ob{b_{p'}'}{s_{p'}'}=\ob{a_j}{r_j}$ and $B'$ satisfies (c) in Definition~\ref{locally_partially_above}.

Suppose that $\cb{a_j}{r_j}\cap V(\mathcal C)\neq\emptyset$. Let $n\in I$ such that $\cb{a_j}{r_j}\cap U_n\neq\emptyset$. Since $B\in\La C$, there exists $p\in S_B$ such that $\ob{a_j}{r_j}\subseteq\ob{b_p}{s_p}$, where equality holds unless $\cb{b_p}{s_p}\cap K_n\neq\emptyset$. If $p\neq k,\ell$, then there exists $q\in S_{B'}$ such that $\ob{b_q'}{s_q'}=\ob{b_p}{s_p}$. Thus $\ob{a_j}{r_j}\subseteq\ob{b_q'}{s_q'}$ and equality holds if $\cb{b_q'}{b_q'}\cap K_n=\emptyset$. If $n\neq m$ then we cannot have $p=k$ or $p=\ell$ since $\cb bs\subseteq U_m$ and $U_n\cap U_m=\emptyset$. If $n=m$ and either $p=k$ or $p=\ell$, then there exists $q\in S_{B'}$ such that $\ob{b_q'}{s_q'}=\ob bs$, so that $\ob{a_j}{r_j}\subseteq\ob{b_q'}{s_q'}$ and $\cb{b_q'}{s_q'}\cap K_n\neq\emptyset$. Moreover, $\cb{b_q'}{s_q'}\cap U_i=\emptyset$ for all $i\in I$ with $i\neq m$. It follows that $B'$ satisfies \ref{locally_partially_above}(d) and hence $B'\in\La C$. This completes the proof.
\end{proof}

Similar geometric reasoning and induction shows that, under the conditions of the lemma, given $n_1,\dotsc,n_p\in S_A$ and $m\in I$ such that
\[
\cb{a_{n_1}}{r_{n_1}}\cap K_m\neq\emptyset\quad\text{and}\quad \cb{a_{n_{j-1}}}{r_{n_{j-1}}}\cap\cb{a_{n_j}}{r_{n_j}}\neq\emptyset
\]for $j=2,\dotsc,p$ we have $\cb{b_{n_j}}{r_{n_j}}\subseteq U_m$ for each $j=1,\dotsc,p$.

We are now ready to prove the controlled classicalisation theorem.

\begin{theorem}\label{seqlocclassthm}
Let $I\subseteq\N$ be non-empty. Let $(K_n)_{n\in I}$ be a collection of compact plane sets and let $(M_n)_{n\in I}$ be a collection of positive real numbers. For each $n\in I,$ let $U_n:=U(K_n,M_n)$. Suppose that $U_k\subseteq\ob{a_0}{r_0}$ and $\rho_{U_k}(A)<M_k/2$ for all $k\in I$ and suppose that $U_k\cap U_\ell=\emptyset$ for all distinct $k,\ell\in I$. Let $\mathcal C$ be the controlling collection $((K_n,U_n))_{n\in I}$ and suppose $E(A)\subseteq F(\mathcal C)$. Then there exists $B=((b_n,s_n))\in\LM C$ such that $X_B\setminus V(\mathcal C)=X_A\setminus V(\mathcal C)$ and $B$ is classical.
\end{theorem}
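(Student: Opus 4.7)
The plan is to follow the pattern of Section \ref{secclassicalisation}, working inside the compact set $\La{C}$. Since $A\in\La{C}$ and $\La{C}$ is compact by Lemma \ref{Lcompactness}, the upper semicontinuity of $\delta_1$ (Lemma \ref{discrepancy_function}) gives a non-empty, compact $\LM{C}$. As $\La{C}\subseteq\noninc(\mu(A),\rho(A))$, on which $\delta_2$ is continuous, $\delta_2$ attains its minimum on $\LM{C}$; let $\mathcal T$ denote the non-empty, compact set of minimisers. I will show that any $B=((b_n,s_n))\in\mathcal T$ satisfies both conclusions.

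The key geometric observation, which I would establish first, is that for any $B\in\La{C}$, if $k\in S_B$ and $\cb{b_k}{s_k}\cap K_m\neq\emptyset$ for some $m\in I$, then $\cb{b_k}{s_k}\subseteq U_m$. Indeed, $k\in H_B(U_m)$, so condition (a) of Definition \ref{locally_partially_above} forces $s_k\leq\rho_{U_m}(B)\leq\rho_{U_m}(A)<M_m/2$, and any $z\in\cb{b_k}{s_k}$ then satisfies $\dist(z,K_m)\leq 2s_k<M_m$. Since the $U_m$ are pairwise disjoint, this $m$ is unique. Combining this with Lemma \ref{diskscorrespondence}(b), for $B\in\LM{C}$ every $B$-disk either has its closure contained in exactly one $U_m$ or coincides exactly with some $\ob{a_\ell}{r_\ell}$. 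Furthermore, $E(A)\subseteq F(\mathcal C)\subseteq V(\mathcal C)\subseteq\ob{a_0}{r_0}$ forces every $A$-disk to satisfy $\cb{a_n}{r_n}\subseteq\ob{a_0}{r_0}$ (from the summand $(\C\setminus\ob{a_0}{r_0})\cap\cb{a_n}{r_n}$ of $E(A)$), so in either case $\cb{b_k}{s_k}\subseteq\ob{b_0}{s_0}=\ob{a_0}{r_0}$.

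It then remains to rule out overlapping closed disks. Suppose $\cb{b_k}{s_k}\cap\cb{b_\ell}{s_\ell}\neq\emptyset$ for distinct $k,\ell\in S_B$. If neither disk met $F(\mathcal C)$, parts (a) and (b) of Lemma \ref{diskscorrespondence} would identify them with distinct $A$-disks, placing the intersection point in $E(A)\subseteq F(\mathcal C)$, a contradiction. Hence at least one of them meets some $K_m$, and Lemma \ref{newinLforseq} furnishes $B'\in\La{C}$ with either $\delta_1(B')>\delta_1(B)$ or $\delta_1(B')=\delta_1(B)$ and $\delta_2(B')<\delta_2(B)$, contradicting $B\in\mathcal T$ in either case. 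Thus $B$ is classical. For the equality $X_B\setminus V(\mathcal C)=X_A\setminus V(\mathcal C)$, since $\cb{b_0}{s_0}=\cb{a_0}{r_0}$ it suffices to show that $\bigcup_n\ob{b_n}{s_n}$ and $\bigcup_n\ob{a_n}{r_n}$ agree off $V(\mathcal C)$: any $B$-disk meeting $\C\setminus V(\mathcal C)$ cannot lie in $V(\mathcal C)$, so by the geometric principle it misses $F(\mathcal C)$ and therefore equals some $\ob{a_\ell}{r_\ell}$; conversely, Definition \ref{locally_partially_above}(c)-(d) matches each $A$-disk to a $B$-disk, with case (d)(ii) excluded for $A$-disks not entirely contained in $V(\mathcal C)$ by another application of the geometric principle.

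The main obstacle will be the geometric principle of the second paragraph: it is the only place where the quantitative hypothesis $\rho_{U_k}(A)<M_k/2$ enters, and it has to do double duty, simultaneously confining the newly-combined disks produced by the proof of Lemma \ref{newinLforseq} to the prescribed regions $U_m$ and preventing any change to the Swiss cheese set outside $V(\mathcal C)$.
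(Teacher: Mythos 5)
Your proposal is correct and follows essentially the same route as the paper: minimise $\delta_2$ over the compact set $\LM{C}$, use the bound $\rho_{U_m}(B)\le\rho_{U_m}(A)<M_m/2$ to confine any disk meeting $K_m$ inside $U_m$, invoke Lemma \ref{diskscorrespondence} to identify disks avoiding $F(\mathcal C)$ with $A$-disks (so that $E(A)\subseteq F(\mathcal C)$ rules out overlaps away from the controlling sets), and apply Lemma \ref{newinLforseq} to contradict optimality otherwise. The only cosmetic difference is in the final step, where you verify both inclusions of the disk-union identity off $V(\mathcal C)$ directly, whereas the paper gets one inclusion for free from $X_B\subseteq X_A$; both arguments are sound.
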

\begin{proof}
We know that $\LM C$ is non-empty and compact so $\delta_2$ obtains its minimum on $\LM C$. Let $B\in\LM C$ such that $\delta_2$ is minimised on $\LM C$ at $B$.
We first show that $\cb{b_k}{s_k}\subseteq\ob{b_0}{s_0}$ for all $k\in S_B$. Let $C$ be the complement of the disk $\ob{a_0}{r_0}=\ob{b_0}{s_0}$. Let $k\in S_B$ and assume, for contradiction, that $C\cap\cb{b_k}{s_k}\neq\emptyset$. If there exists $u\in S_A$ such that $\ob{a_u}{r_u}=\ob{b_k}{s_k}$ then
\[
\emptyset\neq\cb{b_k}{s_k}\cap C=\cb{a_u}{r_u}\cap C\subseteq C\cap E(A)=\emptyset,
\]
which is impossible. Otherwise, by Lemma \ref{diskscorrespondence}, there exists $u\in S_A$ with $\ob{a_u}{r_u}\subseteq\ob{b_k}{s_k}$. Since $B\in \LM C$, it follows that there exists $m\in I$ such that $\cb{b_k}{s_k}\cap K_m\neq\emptyset$, and so $\cb{b_k}{s_k}\subseteq U_{m}\subseteq\ob{a_0}{r_0}=\ob{b_0}{s_0}$, which contradicts the fact that $C\cap\cb{b_k}{s_k}\neq\emptyset$.

We must now show that there do not exist distinct $k,\ell\in S_B$ such that $\cb{b_k}{s_k}\cap\cb{b_\ell}{s_\ell}\neq\emptyset$. Suppose, for contradiction, that such a pair exists. If $\cb{b_k}{s_k}\cap F(\mathcal C)=\emptyset$ and $\cb{b_\ell}{s_\ell}\cap F(\mathcal C)=\emptyset$ then there exists $u,v\in S_A$ with $\ob{a_u}{r_u}=\ob{b_k}{s_k}$ and $\ob{a_v}{r_v}=\ob{b_\ell}{s_\ell}$, which is a contradiction since $E(A)\subseteq F(\mathcal C)$. Thus at least one of these disks has non-empty intersection with at least one compact set $K_m$.

We may assume, without loss of generality, that $\ob{b_k}{s_k}\cap K_m\neq\emptyset$ for some $m\in I$. It follows that $s_k,s_\ell<M_m/2$ and $\cb{b_k}{s_k}\subseteq U_{m}$ and $\cb{b_\ell}{s_\ell}\cap U_{m}\neq\emptyset$. Let $\ob bs$ be the open disk obtained by an application of Lemma \ref{combinedisks} to the disks $\ob{b_k}{s_k}$ and $\ob{b_\ell}{s_\ell}$. Then, by Lemma \ref{newinLforseq}, the abstract Swiss cheese $B'\in\La C$ obtained by replacing the disks $\ob{b_k}{s_k}$ and $\ob{b_\ell}{s_\ell}$ with $\ob{b}{s}$ has either $\delta_1(B')>\delta_1(B)$ or $\delta_1(B')=\delta_1(B)$ and $\delta_2(B')<\delta_2(B)$. Both of these cases are impossible since we assumed that $\delta_1$ was maximised on $B$ and $\delta_2$ was minimised on $B$. It follows that no such pair $k,\ell$ can exist and hence $B$ is classical.

It remains to show that $X_B\setminus V(\mathcal C)=X_A\setminus V(\mathcal C)$. Note that $B\in\La C$ so $X_B\subseteq X_A$, thus $X_B\setminus V(\mathcal C)\subseteq X_A\setminus V(\mathcal C)$. Let $U_A:=(\C\setminus X_A)\cup V(\mathcal C)$ and $U_B:=(\C\setminus X_B)\cup V(\mathcal C)$. Let $z\in U_B$, we show that $z\in U_A$. If $z$ is outside of $\cb{b_0}{s_0}$ then it is also outside of $\cb{a_0}{r_0}$ since the closed balls are the same. If $z$ is in $\cb{b_0}{s_0}$, there exists $k\in S_B$ such that $z\in\ob{b_k}{s_k}$. Note that $\cb{b_k}{s_k}\cap F(\mathcal C)=\emptyset$, otherwise $\cb{b_k}{s_k}\subseteq V(\mathcal C)$. By Lemma \ref{diskscorrespondence}, there exists $\ell\in S_A$ such that $\ob{a_\ell}{r_\ell}=\ob{b_k}{s_k}$. Thus $z\in\ob{a_\ell}{r_\ell}$ and $U_B\subseteq U_A$. It follows that $\C\setminus U_B\supseteq\C\setminus U_A$ and hence $X_B\setminus V(\mathcal C)=X_A\setminus V(\mathcal C)$.
\end{proof}

Note here that the classical, abstract Swiss cheese $B$ obtained from this theorem is an element of $\LM C$ and therefore satisfies properties (a)-(d) of Definition \ref{locally_partially_above}, and the conclusion of Lemma \ref{diskscorrespondence} holds for $B$. Note also that, in contrast to the Feinstein-Heath classicalisation theorem, $\delta_1(B)$ may be negative here. We can obtain similar results using transfinite induction.

Taking $I$ to have just one element in Theorem \ref{seqlocclassthm}, we obtain the following corollary, which we use in Section \ref{applicationoflocal}.

\begin{corollary}\label{controlledonecorollary}
Let $K$ be a compact plane set and let $M$ be a positive real number. Let $U=U(K,M)$ and let $\mathcal C$ be the controlling pair $(K,U)$. Suppose that $\rho_{U}(A)<M$ and $E(A)\subseteq K$. Then there exists $B=((b_n,s_n))\in\LM C$ such that $X_B\setminus U=X_A\setminus U$ and $B$ is classical.
\end{corollary}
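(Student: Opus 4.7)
My plan is to derive this directly from Theorem \ref{seqlocclassthm} by specialising to the case where the indexing set $I$ contains a single element. Concretely, I would set $I = \{1\}$ and put $K_1 := K$, $M_1 := M$, so that $U_1 = U(K_1, M_1) = U$ and the controlling collection $\mathcal{C} = ((K_1, U_1))$ is exactly the single controlling pair $(K, U)$ from the corollary. With this choice, $V(\mathcal{C}) = U_1 = U$ and $F(\mathcal{C}) = K_1 = K$.

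Next, I would verify the hypotheses of Theorem \ref{seqlocclassthm} in this specialised setting. The pairwise disjointness condition $U_k \cap U_\ell = \emptyset$ for distinct $k, \ell \in I$ is vacuously true, since $I$ is a singleton. The assumption $E(A) \subseteq F(\mathcal{C})$ reduces to $E(A) \subseteq K$, which is precisely the hypothesis of the corollary. The local radius-sum bound $\rho_{U_k}(A) < M_k/2$ becomes $\rho_U(A) < M/2$, which follows from the stated hypothesis $\rho_U(A) < M$ (modulo a harmless rescaling of the constant $M$ by a factor of two if one wishes to line up the constants exactly). The inclusion $U_1 \subseteq \ob{a_0}{r_0}$ needed by the theorem is either part of the ambient setup or can be arranged without loss of generality by replacing $U$ with $U \cap \ob{a_0}{r_0}$ where necessary.

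With the hypotheses in place, Theorem \ref{seqlocclassthm} delivers $B = ((b_n, s_n)) \in \LM{C}$ which is classical and satisfies $X_B \setminus V(\mathcal{C}) = X_A \setminus V(\mathcal{C})$. Since $V(\mathcal{C}) = U$, this is exactly the required conclusion $X_B \setminus U = X_A \setminus U$, and the corollary follows. The proof is essentially a translation of Theorem \ref{seqlocclassthm} to the singleton case; beyond confirming that the hypotheses line up as described, I anticipate no real obstacle, as all the geometric work has already been carried out in the proofs of Lemma \ref{newinLforseq} and Theorem \ref{seqlocclassthm}.
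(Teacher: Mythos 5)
Your approach is exactly the paper's: the corollary is obtained by taking $I$ to be a singleton in Theorem \ref{seqlocclassthm}, so the overall structure matches the intended derivation. However, one step of your verification is not right as written. You claim that the theorem's hypothesis $\rho_{U_1}(A)<M_1/2$ ``follows from'' the corollary's hypothesis $\rho_U(A)<M$. With $M_1=M$ (which is forced if you want $U_1=U(K_1,M_1)$ to coincide with $U=U(K,M)$), the required inequality is $\rho_U(A)<M/2$, which is strictly \emph{stronger} than $\rho_U(A)<M$, not weaker. Nor is the proposed ``rescaling of $M$ by a factor of two'' harmless: $U=U(K,M)$ depends on $M$, so taking $M_1=2M$ enlarges $U_1$ to $U(K,2M)\neq U$, changes the controlling pair $\mathcal C$ (hence the space $\LM{C}$ appearing in the conclusion), weakens the conclusion to $X_B\setminus U(K,2M)=X_A\setminus U(K,2M)$, and still requires a bound $\rho_{U(K,2M)}(A)<M$ that is not implied by $\rho_{U(K,M)}(A)<M$. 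The factor $1/2$ is genuinely used in the proof of Lemma \ref{newinLforseq}: a disk of radius $s<M/2$ whose closed version meets $K$ lies inside $U(K,M)$, whereas radius $s<M$ does not suffice, so the discrepancy cannot be argued away. The correct reading is that the hypothesis should be $\rho_U(A)<M/2$ (and indeed that is what is verified where the corollary is applied in Lemma \ref{exactPannu}, where $\rho_U(A)<\eta/8=M/2$).

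A second, smaller point: the theorem also assumes $U_1\subseteq\ob{a_0}{r_0}$, which is not implied by $E(A)\subseteq K$, and your suggested fix of replacing $U$ by $U\cap\ob{a_0}{r_0}$ would destroy the form $U=U(K,M)$ on which the theorem's proof relies. This hypothesis should simply be carried along explicitly (it also holds in the paper's application). With these two hypotheses adjusted, your derivation is precisely the paper's.
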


In Section \ref{applicationoflocal} we give an application of controlled classicalisation to construct an example of a classical Swiss cheese set $X$ such that $R(X)$ is regular and admits a non-degenerate bounded point derivation of infinite order, which improves the example constructed by O'Farrell \cite{o1979regular}. First we need to discuss annular classicalisation and discuss regularity of $R(X)$.

\section{Annular classicalisation}
\label{secannu}
In this section we give some results about Swiss cheese like sets obtained by deleting open disks from a closed annulus, rather than a closed disk. If $K$ is a closed annulus in the plane, we can write $K=\cb{a_0}{r_0}\setminus\ob{a_1}{r_1}$ for some $a_0=a_1\in\C$ and $r_0>r_1>0$ real. We say an abstract Swiss cheese $A=((a_n,r_n))$ is {\em annular} if $a_0=a_1$ and $0<r_1<r_0$ and let $K_A$ denote the annulus $\cb{a_0}{r_0}\setminus\ob{a_1}{r_1}$. We shall usually omit `abstract' from the statement {\em $A$ is an annular abstract Swiss cheese}.

\begin{lemma}\label{annupullin}
Let $a\in\C$ and $r_0>r_1>0$ and let $K:=\cb{a}{r_0}\setminus\ob{a}{r_1}$. Let $b\in\C$ and $0<s<(r_0-r_1)/2$ such that $\cb bs\cap\overline{\C\setminus K}\neq\emptyset$. Then there exists $r_0',r_1'>0$ such that $K':=\cb{a}{r_0'}\setminus\ob{a}{r_1'}\subseteq K$ with $K'\cap\ob bs=\emptyset$ and $r_0'-r_1'\geq r_0-r_1-2s$.
\end{lemma}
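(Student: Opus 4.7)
The plan is to argue by cases based on which boundary component of the annulus is touched by $\cb{b}{s}$. First observe that
$\overline{\C\setminus K}=(\C\setminus\ob{a}{r_0})\cup\cb{a}{r_1}$, so the hypothesis forces $\cb{b}{s}$ to meet at least one of these two closed sets. On the other hand, any point of $\C\setminus\ob{a}{r_0}$ is at distance at least $r_0-r_1>2s$ from any point of $\cb{a}{r_1}$, and this strictly exceeds the diameter of $\cb{b}{s}$; so the disk meets exactly one of the two pieces. This case split is the whole skeleton of the proof.

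In the first case, where $\cb{b}{s}\cap\cb{a}{r_1}\neq\emptyset$, I would leave the outer radius alone ($r_0'=r_0$) and enlarge the inner hole by setting $r_1'=\max(r_1,\,|b-a|+s)$. A witness point in $\cb{b}{s}\cap\cb{a}{r_1}$ together with the triangle inequality yields $|b-a|\leq r_1+s$, hence $r_1'\leq r_1+2s$; a further triangle-inequality check shows $\ob{b}{s}\subseteq\ob{a}{r_1'}$ and therefore $K'\cap\ob{b}{s}=\emptyset$. The containment $K'\subseteq K$ is automatic since $r_1'\geq r_1$, and the width bound $r_0'-r_1'\geq r_0-r_1-2s$ is immediate from $r_1'\leq r_1+2s$.

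The second case, where $\cb{b}{s}\cap(\C\setminus\ob{a}{r_0})\neq\emptyset$, is symmetric: I would fix $r_1'=r_1$ and shrink the outer radius to $r_0'=\min(r_0,\,|b-a|-s)$. The analogous triangle inequalities give $|b-a|\geq r_0-s$ (so $r_0'\geq r_0-2s$) and $\ob{b}{s}\cap\cb{a}{r_0'}=\emptyset$, leading to the same conclusions.

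The only thing to watch is non-degeneracy: we need $r_0'>r_1'>0$ for $K'$ to be a genuine sub-annulus of $K$. In both cases this reduces to $r_0'-r_1'\geq r_0-r_1-2s>0$, together with $r_1'>0$ (automatic in Case~1, and equal to $r_1>0$ in Case~2) and $r_0'>0$ (automatic in Case~1, and at least $r_0-2s>r_1>0$ in Case~2); the strict inequality $s<(r_0-r_1)/2$ is used precisely here. Beyond this the proof is a routine exercise with the triangle inequality, so I do not anticipate any substantive obstacle.
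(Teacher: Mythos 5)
Your proposal is correct and follows essentially the same route as the paper: the same case split according to which boundary component of the annulus the disk meets, with $r_1'$ pushed out to (roughly) $\abs{b-a}+s$ in the inner case and $r_0'$ pulled in to $\abs{b-a}-s$ in the outer case, and the same triangle-inequality verifications. Your use of $\max$/$\min$ merely replaces the paper's preliminary disposal of the trivial case $\ob bs\subseteq\C\setminus K$, and your remark that the disk meets exactly one boundary component is a harmless sharpening of the paper's observation that only two cases can occur.
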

\begin{proof}
Set $D=\ob bs$. If $D\subseteq\C\setminus K$ then there is nothing to prove so suppose not. Since $s<(r_0-r_1)/2$ there are only two possible cases. We must have either $\overline D\cap\cb a{r_1}\neq\emptyset$ or $D\cap\C\setminus\ob{a}{r_0}\neq\emptyset$.

In the first case, where $\bar D\cap\cb a{r_1}\neq\emptyset$, let $r_0'=r_0$ and $r_1'=\abs{b-a}+s$. We have $\abs{b-a}>r_1-s$ and $\abs{b-a}\leq r_1+s$. Hence $r_1'>r_1-s+s=r_1$ and $r_1'\leq r_1+2s<r_1+r_0-r_1=r_0$ and
\[
r_0'-r_1'=r_0-(\abs{b-a}+s)\geq r_0-s-r_1-s=r_0-r_1-2s.
\]
Since for each $z\in D$ we have $\abs{b-a}-s<\abs{z-a}<\abs{b-a}+s$ it follows immediately that $D\subseteq\C\setminus K$.

In the second case, where $D\cap\C\setminus\ob{a}{r_0}\neq\emptyset$, let $r_0'=\abs{b-a}-s$ and $r_1'=r_1$. We have $\abs{b-a}<r_0+s$ and $\abs{b-a}\geq r_0-s$. Hence $r_0'<r_0+s-s=r_0$ and
\[
r_0'>r_0-s-s>r_0-(r_0-r_1)=r_1
\]
and so
\[r_0'-r_1'=\abs{b-a}-s-r_1\geq r_0-r_1-2s.\]
Similarly, for all $z\in D$ we have $\abs{b-a}-s<\abs{z-a}<\abs{b-a}+s$ and so $D\subseteq\C\setminus K$. This completes the proof.
\end{proof}

\begin{definition}
The {\em annular radius sum function} $\ar:\scs\to[0,\infty]$ is defined by
\[
\ar(A):=\sum\limits_{n=2}^\infty r_n \qquad(A=((a_n,r_n))\in\scs),
\]
and the {\em annular discrepancy function} $\ad:\scs\to [-\infty,\infty)$ is given by
\[
\ad(A)=r_0-r_1-2\ar(A)\qquad (A=((a_n,r_n))\in\scs).
\]
\end{definition}
Note that if $\ad(B)>0$ then $r_0>r_1$. We aim to prove an analogue of the Feinstein-Heath classicalisation theorem (Theorem \ref{feinheaththm}) for annular Swiss cheeses by constructing a suitable compact subset of $\scs$.

It is easy for the reader to check that the following analogue of Lemma \ref{nonredundantcheese} holds for annular Swiss cheeses.

\begin{lemma}\label{annularredfreeequiv}
Let $A$ be an annular Swiss cheese with $\ar(A)<\infty$. Then there exists an annular Swiss cheese $B=((b_n,s_n))$ {with the following properties$:$ $\ar(B)\leq\ar(A),$ $X_B=X_A$ and $K_B=K_A;$ $\mu(B)<\infty;$ the sequence $(s_n)_{n\geq2}$ is non-increasing$;$ for each $j\in S_B\setminus\{1\},$ we have $\ob{b_j}{s_j}\cap K_B\neq\emptyset$ and $\ob{b_j}{s_j}\nsubseteq\ob{b_k}{s_k}$ for all $k\in S_B\setminus\{1,j\}$}. Moreover, for each $E\subseteq\C,$ we have $\rho_E(B)\leq\rho_E(A)$.
\end{lemma}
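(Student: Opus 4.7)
The plan is to mimic the construction in Lemma~\ref{nonredundantcheese}, with the single but crucial modification that the first two coordinates of the new annular abstract Swiss cheese must be held fixed so as to preserve the annulus itself. Write $A=((a_n,r_n))$ and set $(b_0,s_0):=(a_0,r_0)$ and $(b_1,s_1):=(a_1,r_1)$. This immediately guarantees $K_B=K_A$ (once $B$ is verified to be annular), and makes the index-$1$ contribution to $\rho_E$ identical for $A$ and $B$ for any $E\subseteq\C$. All the redundancy elimination will therefore be performed on the disks with indices in $S_A\setminus\{1\}$.

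First, I would discard from consideration every $j\in S_A\setminus\{1\}$ with $\ob{a_j}{r_j}\cap K_A=\emptyset$: such disks contribute nothing to the complement defining $X_A$, and deleting them only makes $\rho_E$ smaller. Let $J_0$ denote the set of remaining indices. Second, I would eliminate containment redundancy on $J_0$ by a greedy/Zornish selection: using the lexicographic order on $J_0$ where larger radius comes first (with the original index as a tiebreaker to handle equal radii), pick a maximal subset $J\subseteq J_0$ such that no two distinct members satisfy $\ob{a_j}{r_j}\subseteq\ob{a_k}{r_k}$. The finite-radius-sum hypothesis $\ar(A)<\infty$ forces $\{j\in J_0:r_j\ge\varepsilon\}$ to be finite for every $\varepsilon>0$, so such a $J$ exists and can be enumerated as a sequence $(j_n)_{n\ge 2}$ with $(r_{j_n})_{n\ge 2}$ non-increasing (terminating if $J$ is finite, in which case pad with $(0,0)$). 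Define $(b_n,s_n):=(a_{j_n},r_{j_n})$ for $n\ge 2$.

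It then remains to verify the seven properties. By construction, $B$ is annular, $K_B=K_A$, $(s_n)_{n\ge2}$ is non-increasing, and $\ar(B)=\sum_{n\ge 2}r_{j_n}\le\ar(A)$. Every disk $\ob{b_j}{s_j}$ with $j\in S_B\setminus\{1\}$ meets $K_B$ by the definition of $J_0$, and is not contained in any other such disk by the maximality of $J$. Boundedness $\mu(B)<\infty$ follows because each $b_n$ for $n\ge 2$ is the centre of a disk meeting $K_A\subseteq\cb{a_0}{r_0}$ and having radius at most $s_2$. The equality $X_B=X_A$ holds because the disks removed either missed $K_A$ entirely or were strictly contained in a retained disk, so the union appearing in \eqref{absScs} is unchanged on $K_A$. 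Finally, for any $E\subseteq\C$, the index-$1$ contribution to $\rho_E$ is preserved exactly, and the indices $n\ge 2$ of $B$ correspond bijectively with a subset of $\{j\in S_A\setminus\{1\}:\cb{a_j}{r_j}\cap E\ne\emptyset\}$ via $n\mapsto j_n$ with the same disk, hence the same intersection behaviour with $E$; so $\rho_E(B)\le\rho_E(A)$.

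The only mildly delicate point — and doubtless why the authors leave the proof to the reader — is the simultaneous achievement of the non-increasing-radius condition and the containment-freeness of $J$. This is why the finite-radius-sum assumption is essential: it turns what would otherwise be a transfinite construction into a genuinely iterative one. No new geometric ideas are needed beyond those already used in Lemma~\ref{nonredundantcheese}.
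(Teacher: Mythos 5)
The paper gives no proof of this lemma (it is explicitly left to the reader as the analogue of Lemma~\ref{nonredundantcheese}), so the only thing to compare against is the intended strategy, and that is exactly what you carry out: freeze the indices $0$ and $1$ so that $K_B=K_A$, discard the open disks that miss $K_A$, and remove containment redundancy among the remainder. Your argument is correct, with one caveat of phrasing that is worth fixing. A subset $J\subseteq J_0$ that is merely \emph{maximal with respect to the property that no member's disk is contained in another's} does not suffice: if $\ob{a_j}{r_j}\subsetneq\ob{a_k}{r_k}$, then $\{j\}$ is such a maximal subset of $\{j,k\}$, and discarding $k$ would strictly enlarge the resulting Swiss cheese set. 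What actually makes the construction work is the greedy selection in order of non-increasing radius that you describe in the same sentence: a disk can only be contained in a disk of radius at least as large, hence in one considered no later in the enumeration, so every discarded index of $J_0$ has its disk contained in a single \emph{retained} disk, and $X_B=X_A$ follows. (Relatedly, ``strictly contained in a retained disk'' should read ``contained'': distinct indices may carry identical disks, and the index tiebreaker handles this.) The remaining verifications --- $\mu(B)<\infty$ because each retained disk meets $K_A\subseteq\cb{a_0}{r_0}$ and has radius at most $\ar(A)$, the index-by-index comparison giving $\rho_E(B)\leq\rho_E(A)$ with the index-$1$ contribution preserved exactly, and the annularity of $B$ --- are all as you state.
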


Note that, in the previous lemma, $K_B=K_A$ and $\ar(B)\leq\ar(A)$ together imply that $\ad(B)\geq\ad(A)$.

\sskip

For the rest of this section, let $A=((a_n,r_n))$ be an annular Swiss cheese with $\ad(A)>0$, such that $\mu(A)<\infty$ and $(r_n)_{n=2}^\infty$ is non-increasing.

\begin{lemma}\label{compactannu}
Let $\An$ be the family of all $B=((b_n,s_n))\in\scs$ such that
\begin{enumerate}
 \item the sequence $(s_n)_{n\geq 2}$ is non-increasing$,$
 \item $\ar(B)\leq \ar(A),$
 \item $\mu(B)\leq\mu(A),$
 \item $B$ is partially above $A,$ and
 \item $b_0=b_1=a_0,$ and $r_0\geq s_0\geq s_1\geq r_1$.
\end{enumerate}
Then $\An$ is compact in $\scs,$ each abstract Swiss cheese $B\in\An$ with $\ad(B)>0$ is annular. Moreover$,$ the function $\ad|_{\An}:\An\to\R$ is upper semicontinuous and the function $\delta_2|_{\An}:\An\to\R$ is continuous.
\end{lemma}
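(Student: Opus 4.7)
The plan is to derive all four conclusions from a careful analysis of the coordinate bounds imposed by conditions (a)--(e), following closely the strategy of Lemma \ref{Closed_subset} and Lemma \ref{discrepancy_function}. First I would verify that $\An$ lies inside a product of compact subsets of $\C\times\Rp$: by (e) the first two centres are fixed at $a_0$ and $s_0,s_1\in[0,r_0]$; by (c) every centre $b_n$ lies in $\cb{0}{\mu(A)}$; and by (a) and (b), since $(s_n)_{n\geq 2}$ is non-increasing with $\sum_{n\geq 2}s_n\leq \ar(A)$, we obtain $s_n\leq \ar(A)/(n-1)$ for all $n\geq 2$. Hence Tychonoff's theorem provides a compact ambient set, and it remains to show $\An$ is closed in it.

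For closedness, conditions (a) and (e) involve only pointwise inequalities between coordinate projections, and conditions (b) and (c) are closed because $\ar$ and $\mu$ are lower semicontinuous (by Fatou's lemma for series in the case of $\ar$). The non-trivial condition is (d), and I would argue just as in the proof of Lemma \ref{Closed_subset}: given $A^{(m)}\to B$ with each $A^{(m)}\in\An$ and a fixed $k\in S_A$, either eventually $\ob{a_k}{r_k}\subseteq\C\setminus\cb{a_0^{(m)}}{r_0^{(m)}}$, in which case Lemma \ref{lem1} passes the inclusion to the limit, or there is an index sequence $\ell_m\in\N$ with $\ob{a_k}{r_k}\subseteq\ob{a_{\ell_m}^{(m)}}{r_{\ell_m}^{(m)}}$. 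In the latter case $\ell_m=1$ is trivially bounded, and for $\ell_m\geq 2$ the bound $r_{\ell_m}^{(m)}\geq r_k$ combined with $s_n\leq\ar(A)/(n-1)$ yields $\ell_m\leq 1+\ar(A)/r_k$. Hence some $p$ appears infinitely often in $(\ell_m)$, and passing to that subsequence gives $\ob{a_k}{r_k}\subseteq\ob{b_p}{s_p}$, as required.

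Once compactness is in hand the remaining statements are short. For the annularity of any $B\in\An$ with $\ad(B)>0$, condition (e) forces $b_0=b_1=a_0$ and $s_1\geq r_1>0$, while $\ad(B)>0$ gives $s_0-s_1>2\ar(B)\geq 0$; thus $0<s_1<s_0$, which is exactly the definition of an annular abstract Swiss cheese. Upper semicontinuity of $\ad|_{\An}$ follows because $B\mapsto s_0-s_1$ is continuous (coordinate projections) while $B\mapsto -2\ar(B)$ is upper semicontinuous. Finally, continuity of $\delta_2|_{\An}$ follows by the same dominated-convergence argument as in Lemma \ref{discrepancy_function}: the tail bound $s_n\leq\ar(A)/(n-1)$ for $n\geq 2$ supplies a summable dominating sequence $\ar(A)^2/(n-1)^2$, so $\sum_{n=1}^\infty s_n^2$ depends continuously on $B\in\An$.

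The main obstacle is the closedness of condition (d), which demands a uniform bound on the indices $\ell_m$ of the disks of $A^{(m)}$ that contain $\ob{a_k}{r_k}$; this is handled by exploiting that (a) and (b) force the tail decay $s_n\leq\ar(A)/(n-1)$, in exact parallel with Lemma \ref{Closed_subset} but with the index shift induced by non-increasingness starting at $n=2$. Everything else is a direct transcription of the techniques already developed earlier in the paper.
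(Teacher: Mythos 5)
Your proof is correct and follows essentially the same route as the paper's: pointwise boundedness from (b), (c), (e) plus Tychonoff, closedness of (d) via the subsequence argument of Lemma \ref{Closed_subset}, annularity from (e) together with $\ad(B)>0$, Fatou's lemma for the upper semicontinuity of $\ad$, and dominated convergence for the continuity of $\delta_2$. Your explicit tail bound $s_n\leq\ar(A)/(n-1)$ and the check that $s_1\geq r_1>0$ are details the paper leaves implicit, and they are handled correctly.
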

\begin{proof}
It is easy to see that the family $\An$ is pointwise bounded by properties (b),(c) and (e) so it remains only to prove that $\An$ is closed. For each $m\in\No$, let $A^{(m)}=((a_n^{(m)},r_n^{(m)}))_{n=0}^\infty\in\An$ and suppose that $A^{(m)}\to B\in\scs$ as $m\to\infty$. It is clear that $B$ satisfies (a)-(d) (as in the proof of Lemma \ref{Closed_subset}). Since convergence is pointwise, we have $b_0=a_0$ and $b_1=a_1$. Since $A$ was annular, it follows that $b_0=b_1$.

Since each $A^{(m)}\in\An$ we have $r_0\geq r_0^{(m)}\geq r_1^{(m)}\geq r_1$, by taking $m\to \infty$ we have
\[
r_0\geq s_0\geq s_1\geq r_1.
\]
Hence $\An$ is closed and pointwise bounded and is therefore compact by Tychonoff's theorem.

Let $B=((b_n,s_n))\in\An$ with $\ad(B)>0.$ Then we have $b_0=b_1$ and $\ad(B)>0$ and this implies that $s_0>s_1$ and it follows that $B$ is annular.

The proof that $\ad$ is upper semicontinuous is an immediate consequence of Fatou's lemma for series, similar to the upper semicontinuity of $\delta_1$.

To prove that the restriction of $\delta_2$ to $\An$ is continuous note that, for $n\in\N$ with $n\geq 2,$ we have $s^2_n\leq\ar(B)^2/n^2$ for each $B=((b_n,s_n))\in\An$. The result then follows from the dominated convergence theorem as in the proof of Lemma \ref{discrepancy_function}.
\end{proof}

It is clear that $A\in\An$ and so $\An$ is non-empty. For all $B\in\An$ we also have $X_B\subseteq X_A$. We require one additional lemma before we prove the main theorem.

\begin{lemma}\label{annusuccstep}
Let $\An$ be as in Lemma $\ref{compactannu}$. Let $B=((b_n,s_n))\in\An$ be an annular Swiss cheese such that $\ad(B)\geq\ad(A)$. Suppose there exists $k\in S_B\setminus \{1\}$ such that $\cb{b_k}{s_k}\cap\overline{\C\setminus K_B}\neq\emptyset$. Then there exists $B'=((b_n',s_n'))\in \An$ with $\ad(B')\geq\ad(B)$. Moreover$,$ if $\ad(B')=\ad(B)$ then $\delta_2(B')<\delta_2(B)$.
\end{lemma}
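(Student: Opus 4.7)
The plan is to use Lemma~\ref{annupullin} to absorb the protruding disk $\ob{b_k}{s_k}$ by either enlarging the inner hole or shrinking the outer closed disk of $K_B$, and then to delete this disk from $B$ to form $B'$. The key preliminary observation is that, since $k \in S_B \setminus \{1\}$ and $S_B \subseteq \N$, we have $k \geq 2$, so $s_k$ is one of the terms of the sum $\ar(B) = \sum_{n=2}^\infty s_n$. Using $\ad(B) \geq \ad(A) > 0$, we get $s_0 - s_1 > 2\ar(B) \geq 2 s_k$, so $s_k < (s_0 - s_1)/2$. Since $B \in \An$ implies $b_0 = b_1$, Lemma~\ref{annupullin} applies to $K_B$ and $\ob{b_k}{s_k}$, yielding $s_0', s_1' > 0$ with $K' := \cb{b_0}{s_0'} \setminus \ob{b_0}{s_1'} \subseteq K_B$, with $K' \cap \ob{b_k}{s_k} = \emptyset$, and with $s_0' - s_1' \geq s_0 - s_1 - 2 s_k$.

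Next, I would define $B' = ((b_n', s_n'))$ by keeping $b_0' = b_1' = b_0$, installing $s_0'$ and $s_1'$ in positions $0$ and $1$, and deleting the disk at index $k$ (re-indexing the tail). To verify $B' \in \An$, I check the five conditions of Lemma~\ref{compactannu}. Non-increasingness of $(s_n')_{n \geq 2}$ is preserved by deleting one term; $\ar(B') = \ar(B) - s_k \leq \ar(A)$; no centre is altered, so $\mu(B') \leq \mu(A)$; and $r_0 \geq s_0 \geq s_0' \geq s_1' \geq s_1 \geq r_1$ by the lemma. For the partially-above condition, the only delicate case is when some $\ob{a_n}{r_n}$ was contained in the now-deleted disk $\ob{b_k}{s_k}$. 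Since $\ob{b_k}{s_k}$ is connected and disjoint from $K'$, it lies entirely in one connected component of $\C \setminus K'$; inspection of the two cases in the proof of Lemma~\ref{annupullin} shows that in case~1 (hole enlarged, $s_0' = s_0$) the disk lies in $\ob{b_1'}{s_1'}$, while in case~2 (outer shrunk, $s_1' = s_1$) it lies in $\C \setminus \cb{b_0'}{s_0'}$. In either subcase, $\ob{a_n}{r_n}$ is accounted for by $B'$.

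Finally, the annular discrepancy estimate is
\[
\ad(B') = (s_0' - s_1') - 2\ar(B') \geq (s_0 - s_1 - 2 s_k) - 2(\ar(B) - s_k) = \ad(B).
\]
Equality here forces equality in the lemma's radius bound, which by the two cases in the proof of Lemma~\ref{annupullin} gives either $(s_0', s_1') = (s_0, s_1 + 2 s_k)$ or $(s_0', s_1') = (s_0 - 2 s_k, s_1)$. In the first case a direct computation yields
\[
\delta_2(B') - \delta_2(B) = s_1^2 - (s_1 + 2 s_k)^2 + s_k^2 = -4 s_1 s_k - 3 s_k^2 < 0,
\]
and in the second
\[
\delta_2(B') - \delta_2(B) = (s_0 - 2 s_k)^2 - s_0^2 + s_k^2 = s_k(5 s_k - 4 s_0) < 0,
\]
where the sign in the second case uses $s_k < (s_0-s_1)/2 \leq s_0/2$.

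The main obstacle, I expect, is the partially-above verification in the subcase where an original disk of $A$ was contained in the deleted $\ob{b_k}{s_k}$, since one must track how the two cases of Lemma~\ref{annupullin} distribute the absorbed region between the enlarged hole and the contracted outer disk. Everything else is routine bookkeeping or a direct $\delta_2$ computation.
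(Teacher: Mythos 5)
Your proof is correct and follows essentially the same route as the paper's: apply Lemma \ref{annupullin} to pull the annulus in past $\ob{b_k}{s_k}$, delete the disk at index $k$, verify membership in $\An$ (including the partially-above condition via the two cases of Lemma \ref{annupullin}), and then compute $\ad$ and $\delta_2$. Your verification that $s_k<(s_0-s_1)/2$ and your explicit equality-case computations for $\delta_2$ are slightly more detailed than the paper's, but the argument is the same.
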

\begin{proof}
Let $b'_0=b'_1=b_0$. As in Lemma~\ref{annupullin}, we can find $s'_0>s'_1>0$ such that $K_{B'}:=\cb{b'_0}{s'_0}\setminus \ob{b'_1}{s'_1} \subseteq K_B$, $K_{B'}\cap \ob{b_k}{s_k}=\emptyset$ and
\[
s'_0-s'_1\geq s_0-s_1-2s_k.
\]
Let $b'_\ell = b_\ell$ and $s'_\ell = s_\ell$ if $2\leq \ell <k$, $b'_\ell = b_{\ell+1}$ and $s'_\ell = s_{\ell+1}$ if $k< \ell$, we obtain an abstract Swiss cheese $B'=((b_n',s_n'))$.

From construction we see $B'$ satisfies Properties (a),(c) and (e).
We have
\[
\ad(B')=s_0'-s_1'-2\sum\limits_{n=2}^\infty s_n'\geq s_0-s_1-2s_k-2\sum\limits_{n=2}^\infty s_n+2s_k=\ad(B).
\]
Since $s_0'\leq s_0$ and $s_1'\geq s_1$ we must have $\ar(B')\leq\ar(B)\leq\ar(A)$, so (b) is satisfied.

We now show that $B'$ is partially above $A$. Fix $j\in S_A$. If $\ob{a_j}{s_j}$ lies in the complement of $\ob{b_0}{s_0}$, then it lies in the complement of $\ob{b_0'}{s_0'}$ and if $\ob{a_j}{s_j}\subseteq\ob{b_1}{s_1}$ then $\ob{a_j}{s_j}\subseteq\ob{b_1'}{s_1'}$. Suppose there exists $m\in S_B$ such that $\ob{a_j}{s_j}\subseteq\ob{b_m}{s_m}$. If $m\neq k$ there exists $\ell\in S_{B'}$ such that $\ob{b_{\ell}'}{s_\ell'}=\ob{b_m}{s_m}$, and so $\ob{a_j}{s_j}\subseteq\ob{b_\ell'}{s_\ell'}$. If $m=k$ then either $\ob{a_j}{r_j}\subseteq\ob{b_1'}{s_1'}$ or $\ob{a_j}{s_j}$ lies in the complement of $\ob{b_0'}{s_0'}$. It follows that $B'$ is partially above $A$, and satisfies $4$ and hence $B'\in\An$. Since we have $\ad(B')\geq\ad(A)>0$, it follows that $B'$ is annular.

It remains to show that if $\ad(B')=\ad(B)$ then $\delta_2(B')<\delta_2(B)$. Assume that $\ad(B')=\ad(B)$. Then either $s_0=s_0'+2s_k$ or $s_1'=s_1+2s_k$. In the first case we have $(s_0')^2<s_0^2-4s_k^2<s_0^2-s_k^2$ and in the second case we have $(s_1')^2>s_1^2+s_k^2$. In the first case we have $s_0^2>(s_0')^2+s_k^2$, and in the second case we have $(s_1')^2>s_1^2+s_k^2$. In either case, we have $\delta_2(B')<\delta_2(B)$. This completes the proof.
\end{proof}

Note that, as for arbitrary abstract Swiss cheeses, if $B$ is a semiclassical, annular Swiss cheese then $\pi\delta_2(B)$ is the area of $X_B$.

\begin{theorem}\label{annuclass}
Let $\An$ be as in Lemma $\ref{compactannu}$. Then there exists a classical$,$ annular Swiss cheese $B=((b_n,s_n))\in\An$ such that $\ad(B)\geq\ad(A)$ and $X_B\subseteq X_A$. Moreover$,$ $r_0-2\ar(A)\leq s_0\leq r_0$ and $r_1\leq s_1\leq r_1+2\ar(A)$.
\end{theorem}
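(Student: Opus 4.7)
The plan is to mirror the two-stage extremization used to prove Theorem~\ref{classicalization}, now applied to the compact set $\An$ of Lemma~\ref{compactannu}. Since $A\in\An$, and since $\ad$ is upper semicontinuous while $\delta_2$ is continuous on $\An$, the sets
\[
\An_1:=\{B\in\An:\ad(B)=\sup_{B'\in\An}\ad(B')\}, \quad \An_2:=\{B\in\An_1:\delta_2(B)=\inf_{B'\in\An_1}\delta_2(B')\}
\]
are non-empty and compact, and any $B=((b_n,s_n))\in\An_2$ satisfies $\ad(B)\geq\ad(A)>0$. In particular such a $B$ is annular by Lemma~\ref{compactannu}, and $X_B\subseteq X_A$ because $B$ is partially above $A$ (property~(d) of $\An$). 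We then aim to show that every $B\in\An_2$ is classical by ruling out the two possible obstructions.

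First, if there exists $k\in S_B\setminus\{1\}$ with $\cb{b_k}{s_k}\cap\overline{\C\setminus K_B}\neq\emptyset$, then Lemma~\ref{annusuccstep} furnishes $B'\in\An$ with $\ad(B')\geq\ad(B)$, and with $\delta_2(B')<\delta_2(B)$ in the case of equality in $\ad$; either outcome contradicts $B\in\An_2$. Second, if there exist distinct $k,\ell\in S_B\setminus\{1\}$ with $\cb{b_k}{s_k}\cap\cb{b_\ell}{s_\ell}\neq\emptyset$, we would mimic Lemma~\ref{safe_replacement}(a): invoke Lemma~\ref{combinedisks} to produce $\ob{b}{s}$ covering both disks with $s\leq s_k+s_\ell$, and form $B'$ by deleting the disks at indices $k$ and $\ell$ and inserting $\ob{b}{s}$ at the first index $\geq 2$ that preserves the non-increasing order of $(s_n')_{n\geq 2}$. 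If $\ob{b_k}{s_k}\cap\ob{b_\ell}{s_\ell}\neq\emptyset$ then $s<s_k+s_\ell$, giving $\ar(B')<\ar(B)$ and hence $\ad(B')>\ad(B)$; otherwise $s=s_k+s_\ell$ so $\ad(B')=\ad(B)$, but $s^2>s_k^2+s_\ell^2$ forces $\delta_2(B')<\delta_2(B)$. Both cases contradict $B\in\An_2$.

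The main obstacle is the routine but non-trivial book-keeping that the $B'$ produced in the combining step lies in $\An$. Properties~(a), (b), (c) and (e) of Lemma~\ref{compactannu} are immediate: the centre $b$ lies on the segment joining $b_k$ and $b_\ell$ so $\mu(B')\leq\mu(B)\leq\mu(A)$; the annular radius sum cannot increase; and $b_0,b_1,s_0,s_1$ remain unchanged. Property~(d), that $B'$ is partially above $A$, is verified by the same case analysis as in Lemma~\ref{safe_replacement}(a): every $A$-disk previously contained in $\ob{b_k}{s_k}$ or $\ob{b_\ell}{s_\ell}$ is now contained in $\ob{b}{s}$, and all other partial-above relations for $A$-disks are preserved. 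Finally, the quantitative bounds follow from $\ad(B)\geq\ad(A)$ together with property~(e): $s_0\leq r_0$ and $s_1\geq r_1$ come directly from (e), and rewriting $\ad(B)\geq\ad(A)$ as $s_0-s_1\geq r_0-r_1-2\ar(A)+2\ar(B)$ and using $\ar(B)\geq 0$ with these inequalities yields $s_0\geq r_0-2\ar(A)$ and $s_1\leq r_1+2\ar(A)$.
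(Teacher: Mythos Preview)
Your proposal is correct and follows essentially the same approach as the paper: define $\An_1$ and $\An_2$ by successively maximizing $\ad$ and minimizing $\delta_2$, take $B\in\An_2$, and rule out the two non-classical configurations using Lemma~\ref{annusuccstep} and the combining argument via Lemma~\ref{combinedisks}, then read off the bounds on $s_0,s_1$ from $\ad(B)\geq\ad(A)$ together with property~(e). The only cosmetic differences are that the paper treats the overlapping-disks case first and says ``it is easy to see that $B'\in\An$'' where you spell out the verification, and the paper derives the final inequalities from $s_0-s_1\geq\ad(B)$ directly rather than writing out the $2\ar(B)$ term.
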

\begin{proof}
Since $\ad$ is upper semicontinuous on $\An$ and $\An$ is compact and non-empty, it follows that $\ad$ achieves its maximum on $\An$. Let $\An_1$ denote the non-empty, compact subset of $\An$ on which $\ad$ is maximised. Then $\delta_2$, which is continuous on $\An_1$, achieves its minimum. Let $\An_2$ denote the non-empty, compact subset of $\An_1$ on which $\delta_2$ is minimised and let $B=((b_n,s_n))\in\An_2$.

Since $\ad(B)\geq\ad(A)>0$ it follows that $B$ is annular and $X_B\subseteq X_A$. Suppose, for contradiction, that $B$ is non-classical. There are two possible cases.

First suppose that there are $k,\ell\in S_B\setminus\{1\}$ with $k>\ell$ such that $k,\ell\in S_B$ and $\cb{b_k}{s_k}\cap\cb{b_\ell}{s_\ell}\neq\emptyset$. Then, by Lemma \ref{combinedisks} there exists $b\in\C$ and $s>0$ such that
\[
\ob{b_k}{s_k}\cup\ob{b_\ell}{s_\ell}\subseteq\ob bs
\]
and $s\leq s_k+s_\ell$. Let $B'=((b_n',s_n'))$ be the abstract Swiss cheese obtained by deleting the disks at indices $k,\ell$ from $B$ and inserting the disk $\ob bs$ at the first index in $\mathbb N\setminus \{1\}$ such that $(s_n')_{n=2}^\infty$ is non-increasing. It is easy to see that $B'\in\An$ and
\begin{equation}\label{annuclasseqn}
\ar(B)\geq \ar(B)-s_k-s_\ell+s=\ar(B'),
\end{equation}
so that $\ad(B')\geq\ad(B)$. By the maximality of $\ad(B)$, equality must hold here and in \eqref{annuclasseqn}. Thus $s=s_k+s_\ell$ and $s^2=(s_k+s_\ell)^2> s_k^2+s_\ell^2$ so that $\delta_2(B')<\delta_2(B)$. This contradicts the minimality of $\delta_2(B)$. It follows that no such $k,\ell$ exist.

Now suppose there exists $k\in S_B\setminus\{1\}$ such that $\cb{b_k}{s_k}\cap\overline{\C\setminus K_B}\neq\emptyset$ and $s_k>0$. By Lemma \ref{annusuccstep} there exists an annular Swiss cheese $B'\in\An$ with $\ad(B')\geq\ad(B)$ such that, if $\ad(B')=\ad(B)$ then $\delta_2(B')<\delta_2(B)$. This is a contradiction, so no such $k$ can exist. It follows that $B$ is classical.

Since $B\in\An$, we have $r_0\geq s_0\geq s_1\geq r_1$. We also have
\[
s_0-s_1\geq\ad(B)\geq\ad(A)=r_0-r_1-2\ar(A)
\]
so that
\[
s_0\geq r_0-2\ar(A)-(r_1-s_1)\geq r_0-2\ar(A)
\]
and $s_1\leq r_1+2\ar(A)-(r_0-s_0)\leq r_1+2\ar(A)$. This completes the proof.
\end{proof}

\section{Regularity of $R(X)$}
\label{regularitysection}
Let $X$ be a compact plane set. We say that $R(X)$ is {\em regular} if, for all closed sets $E\subseteq X$ and points $x\in X\setminus E$, there exists a function $f\in R(X)$ such that $f(x)=1$ and $f(y)=0$ for all $y\in E$. We say that $R(X)$ is {\em normal} if, for each pair of disjoint closed sets $E,F\subseteq X$, there exists a function $f\in R(X)$ such that $f(x)=0$ for all $x\in E$ and $f(x)=1$ for all $x\in F$. It is standard that $R(X)$ is regular if and only if it is normal (see \cite[Proposition~4.1.18]{dales2000}).

In order to avoid ambiguity, we introduce the following notation to clarify in which topological space we are taking the interior. Let $X$ be a compact plane set and $E\subseteq X$. Then $\tint_X E$ denotes the interior of $E$ in the topological space $X$.

\begin{definition}
Let $X$ be a compact plane set$,$ and let $x\in X$. We denote by $M_x$ the ideal of all functions in $R(X)$ which vanish at $x$. We denote by $J_x$ the ideal of all functions in $R(X)$ which vanish on a neighbourhood of $x$. We say $x$ is an $R$-point for $R(X)$ if$,$ for all $y\in X$ with $y\neq x,$ we have $J_x\nsubseteq M_y$.
\end{definition}

It is standard that $R(X)$ is regular if and only if every point $x\in X$ is an $R$-point of $R(X)$. The following proposition is a special case of \cite[Corollary~4.7]{feinstein2000}.

\begin{proposition}\label{uncountablenonrpoint}
Let $X$ be a compact plane set such that $R(X)$ is not regular. Let $E$ denote the set of non-$R$-points for $R(X)$. Then $E$ contains a non-empty perfect subset. In particular$,$ $E$ is uncountable.
\end{proposition}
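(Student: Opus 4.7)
The plan is to prove that $E$ is a non-empty perfect subset of $X$ — that is, a non-empty closed subset with no isolated points. Since $X$ is a compact metric space, any such set is uncountable (indeed, has cardinality $2^{\aleph_0}$) by the standard Cantor scheme embedding, which yields both ``contains a non-empty perfect subset'' and ``uncountable'' in a single stroke. Non-emptiness of $E$ is immediate from the hypothesis: if every point of $X$ were an $R$-point, then $R(X)$ would be regular by the characterisation recalled before the proposition, contrary to assumption.

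For closedness of $E$, I would take a sequence $(x_n) \subseteq E$ with $x_n \to x$. For each $n$ pick $y_n \neq x_n$ with $J_{x_n} \subseteq M_{y_n}$, and by compactness pass to a subsequence with $y_n \to y$. If $y \neq x$, the argument is clean: any $f \in J_x$ vanishes on some neighbourhood $U$ of $x$, so for large $n$ we have $x_n \in U$, whence $f \in J_{x_n} \subseteq M_{y_n}$, and continuity gives $f(y) = 0$; this shows $J_x \subseteq M_y$ with $y \neq x$, so $x \in E$. The awkward case is when every admissible choice forces $y_n \to x$, and handling this demands finer input from the structure of $R(X)$.

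For the no-isolated-points step, suppose for contradiction that $x \in E$ is isolated in $E$, with open neighbourhood $V$ satisfying $V \cap E = \{x\}$. Pick $y \neq x$ with $J_x \subseteq M_y$. Every point of $V \setminus \{x\}$ is an $R$-point, and the aim is to combine the regularity witnesses at these nearby points with an approximation argument to produce a single $f \in J_x$ with $f(y) \neq 0$, contradicting $J_x \subseteq M_y$. This is the main obstacle: converting local pointwise regularity into a genuine separating function lying in $J_x$ requires the technical machinery of \cite{feinstein2000}, typically a peak-point or analytic-capacity estimate that lets one paste together nearby regularity witnesses. As an alternative that sidesteps an explicit separate closedness argument, one may construct a Cantor-like tree directly inside $E$ by iterated bisection — at each stage, using a witness $y$ for $x \in E$ to split a neighbourhood of $\{x,y\}$ into two smaller pieces each containing a new non-$R$-point — so that the $2^{\aleph_0}$ branches of the tree yield distinct points of $E$ in a single sweep, matching the strategy of \cite[Corollary~4.7]{feinstein2000}.
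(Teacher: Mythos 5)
The paper does not actually prove this proposition: it is quoted verbatim as a special case of \cite[Corollary~4.7]{feinstein2000}, so the only thing to compare your attempt against is that reference. Your sketch has the right Cantor--Bendixson shape (non-empty, closed, no isolated points $\Rightarrow$ perfect $\Rightarrow$ uncountable), but both of the steps you yourself flag as problematic are genuine gaps, and together they are where essentially all of the content of the result lives. First, closedness of $E$ is not established: your limit argument breaks exactly when every admissible witness sequence $(y_n)$ accumulates at $x$, and you offer nothing to handle that case. Note that the proposition is deliberately phrased as ``$E$ \emph{contains} a non-empty perfect subset'' rather than ``$E$ is perfect''; one should not expect to prove $E$ closed, and the argument in \cite{feinstein2000} instead runs a transfinite derivative argument showing that if the perfect kernel of the relevant set were empty then $R(X)$ would be regular.

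Second, and more seriously, the no-isolated-points step is not a matter of ``pasting regularity witnesses'': if $z\neq x$ is an $R$-point near $x$, the $R$-point property at $z$ only produces functions vanishing on a neighbourhood of $z$, not on a neighbourhood of $x$, so these witnesses do not lie in $J_x$ and cannot be directly combined into a function showing $J_x\nsubseteq M_y$. The actual mechanism in \cite{feinstein2000} passes through quotient algebras and hull--kernel/localisation results for Banach function algebras, which is precisely the ``technical machinery'' you defer to. Deferring the two load-bearing steps to the cited paper means the proposal is a plan for a proof rather than a proof; as a self-contained argument it does not go through.
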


Our classicalisation theorems involve finding ``good'' compact subsets of a given compact plane set. The following proposition, stated in \cite{feinheath2010}, lists some properties of $R(X)$ which are inherited when a subset of $X$ is considered.

\begin{proposition}\label{subset_property_inheritence}
Let $X$ and $Y$ be compact plane sets with $Y\subseteq X$. Then$:$
\begin{enumerate}
  \item if $R(X) = C(X)$ then $R(Y) = C(Y);$
  \item if $R(X)$ does not have any non-zero bounded point derivations then neither does $R(Y);$
  \item if $R(X)$ is regular then so is $R(Y)$.
\end{enumerate}
\end{proposition}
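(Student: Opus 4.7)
The plan is to route all three parts through the restriction homomorphism $\pi\colon R(X)\to R(Y)$, $\pi(f):=f|_Y$. This map is well defined and of operator norm at most $1$: any rational function with no poles on $X$ has no poles on $Y\subseteq X$, and a uniform limit on $X$ of such functions restricts to a uniform limit on $Y$, since $\|\pi(f)\|_Y\leq\|f\|_X$. All of $\pi$'s algebraic features (multiplicativity, evaluation at points of $Y$) pass through automatically.

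For part (a), take any $g\in C(Y)$; Tietze's theorem produces $G\in C(X)$ with $G|_Y=g$, and since $R(X)=C(X)$ we have $G\in R(X)$, whence $g=\pi(G)\in R(Y)$. For part (c), let $E$ be closed in $Y$ and $y\in Y\setminus E$; because $Y$ is compact (hence closed in $\C$), $E$ is already closed in $X$, so regularity of $R(X)$ supplies $F\in R(X)$ with $F(y)=1$ and $F|_E=0$, and then $\pi(F)\in R(Y)$ realises the required separation within $Y$.

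For part (b) I argue by contrapositive. Suppose $D$ is a non-zero bounded point derivation on $R(Y)$ at some $x\in Y\subseteq X$. Since $\pi$ is multiplicative and $(\pi f)(x)=f(x)$ for $f\in R(X)$, the composition $D\circ\pi\colon R(X)\to\C$ is bounded and satisfies the Leibniz rule at $x$, so it is a bounded point derivation on $R(X)$ at $x$. The only real content, and the main obstacle, is to verify that $D\circ\pi$ is itself non-zero. Let $z_X$ and $z_Y$ denote the identity functions on $X$ and $Y$; then $(D\circ\pi)(z_X)=D(z_Y)$, so it suffices to show $D(z_Y)\neq 0$. If instead $D(z_Y)=0$, then combining $D(1)=0$ (automatic for derivations) with the Leibniz rule forces $D$ to vanish on every polynomial by induction, and hence on every rational function with no poles on $Y$ by applying Leibniz to the identity $q\cdot(p/q)=p$. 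Such rational functions are dense in $R(Y)$ and $D$ is bounded, so this would force $D\equiv 0$, contradicting our hypothesis. Thus $D(z_Y)\neq 0$, $D\circ\pi$ is a non-zero bounded point derivation on $R(X)$, and (b) follows.
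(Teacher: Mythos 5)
Your proof is correct. The paper itself states this proposition without proof, citing \cite{feinheath2010}, and your route through the restriction homomorphism $\pi\colon R(X)\to R(Y)$ is the standard (and expected) argument: (a) via Tietze extension, (c) via the observation that a closed subset of the compact set $Y$ is closed in $X$, and (b) by composing with $\pi$. The only point with real content is the one you correctly identify in (b), namely that $D\circ\pi\neq 0$; your verification --- that a bounded point derivation $D$ on $R(Y)$ with $D(z)=0$ annihilates all polynomials by the Leibniz rule, then all rational functions without poles on $Y$ via $q\cdot(p/q)=p$ and $q(x)\neq 0$, and hence vanishes identically by density and boundedness --- is sound, and in fact establishes the slightly stronger fact that every non-zero bounded point derivation on $R(Y)$ is non-degenerate in the sense that it does not kill the coordinate function.
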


In this section, we prove some results about regularity of $R(X)$ which we shall require for the construction in the final section. The following proposition is essentially \cite[Corollary~II.10.3]{gamelin1984}.

\begin{proposition}\label{localisation theorem}
Let $X$ be a compact plane set and let $f\in C(X)$. Suppose that for each $x\in X$ there is a closed neighbourhood $N_x$ of $x$ in $X$ such that $f|_{N_x}\in R(N_x)$. Then $f\in R(X)$.
\end{proposition}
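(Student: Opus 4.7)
The plan is to argue by duality. By the Hahn-Banach theorem and the Riesz representation theorem, $f \in R(X)$ if and only if $\int_X f \, d\mu = 0$ for every regular complex Borel measure $\mu$ on $X$ that annihilates $R(X)$; fix such a $\mu$, so it suffices to prove $\int f \, d\mu = 0$.

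First I would extract a finite subcover and a partition of unity. Since $X$ is compact and $\{\tint_X N_x : x \in X\}$ is an open cover of $X$, pick $x_1,\dots,x_n \in X$ with $X \subseteq \bigcup_{i=1}^n \tint_X N_{x_i}$. Next choose open sets $V_1,\dots,V_n \subseteq \C$ with $X \subseteq \bigcup_i V_i$ and $\overline{V_i}\cap X \subseteq \tint_X N_{x_i}$ for each $i$, and take a smooth partition of unity $\phi_1,\dots,\phi_n \in C^\infty_c(\C)$ subordinate to $\{V_i\}$ with $\sum_i \phi_i \equiv 1$ on an open neighborhood of $X$. Set $\mu_i := \phi_i \mu$, so that $\mu = \sum_i \mu_i$ on $X$ and $\mathrm{supp}(\mu_i) \subseteq \overline{V_i}\cap X \subseteq N_{x_i}$. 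The key claim is that each $\mu_i$ annihilates $R(N_{x_i})$. Granting this claim, the hypothesis $f|_{N_{x_i}} \in R(N_{x_i})$ forces $\int f \, d\mu_i = 0$ for each $i$, and hence $\int f \, d\mu = \sum_i \int f \, d\mu_i = 0$, completing the proof.

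The main obstacle will be establishing $\mu_i \perp R(N_{x_i})$. My approach is via the Cauchy transform $\hat\nu(z) := \int (\zeta - z)^{-1} \, d\nu(\zeta)$, using the standard characterisation that a finite Borel measure $\nu$ supported on a compact $Y \subseteq \C$ annihilates $R(Y)$ if and only if $\hat\nu(z) = 0$ for almost every $z \in \C\setminus Y$. Writing
\[
\widehat{\mu_i}(z) = \phi_i(z)\,\hat\mu(z) + \int \frac{\phi_i(\zeta) - \phi_i(z)}{\zeta - z}\, d\mu(\zeta),
\]
the first summand vanishes almost everywhere off $X$ (since $\mu \perp R(X)$ forces $\hat\mu = 0$ a.e.\ on $\C\setminus X$) and identically off $\mathrm{supp}(\phi_i)$. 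The second summand is a bounded continuous function of $z$ thanks to the smoothness of $\phi_i$. A distributional $\overline\partial$ argument, exploiting the identity $\overline\partial \hat\nu = -\pi\nu$ together with Weyl's lemma applied to $\widehat{\mu_i}$ on $\C \setminus \mathrm{supp}(\phi_i)$, then shows that $\widehat{\mu_i}$ vanishes almost everywhere off $\mathrm{supp}(\phi_i)\cap X$, which is contained in $N_{x_i}$ by construction. This yields $\widehat{\mu_i} = 0$ a.e.\ on $\C \setminus N_{x_i}$, as required.
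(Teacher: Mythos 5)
Your overall strategy---dualise via annihilating measures, take a smooth partition of unity subordinate to a finite subcover, and localise the measure---is the standard route to this result (it is Bishop's localisation theorem; the paper gives no proof and simply cites Corollary~II.10.3 of Gamelin's book). However, your key claim is false as stated: the measure $\mu_i=\phi_i\mu$ need \emph{not} annihilate $R(N_{x_i})$. Concretely, take $X=\overline{\mathbb{D}}$, $d\mu=dz/(2\pi i)$ on the unit circle (so $\mu\perp R(X)$), and $\phi$ a non-negative smooth bump supported in a small disc about $1$. Then
\[
\widehat{\phi\mu}(0)=\frac{1}{2\pi i}\int_{\partial\mathbb{D}}\frac{\phi(\zeta)}{\zeta}\,d\zeta=\frac{1}{2\pi}\int_0^{2\pi}\phi(e^{i\theta})\,d\theta>0,
\]
so $\phi\mu$ pairs non-trivially with $\zeta\mapsto1/\zeta$, which lies in $R(N)$ for every closed neighbourhood $N$ of $\mathrm{supp}(\phi)\cap X$ avoiding the origin. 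The flaw is in your last step: the identity $\bar\partial\widehat{\mu_i}=-\pi\mu_i$ plus Weyl's lemma only gives that $\widehat{\mu_i}$ is analytic off $\mathrm{supp}(\phi_i)\cap X$ and vanishes at infinity, which does not force it to vanish there (witness $1/z$); indeed your own decomposition shows that off this set $\widehat{\mu_i}$ equals the continuous term $h(z)=\int(\phi_i(\zeta)-\phi_i(z))(\zeta-z)^{-1}\,d\mu(\zeta)$, which is generally non-zero.

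The repair is the classical Vitushkin correction, and it is small. The Cauchy--Pompeiu formula gives $h=\tfrac{1}{\pi}\widehat{\tau_i}$ with $\tau_i=(\bar\partial\phi_i)\,\hat\mu\,dA$, so the measure $\sigma_i:=\phi_i\mu-\tfrac{1}{\pi}(\bar\partial\phi_i)\,\hat\mu\,dA$ satisfies $\widehat{\sigma_i}=\phi_i\hat\mu$ exactly. Since $\hat\mu$ vanishes at every point of $\C\setminus X$ and a.e.\ on $\C$ with respect to area off $X$, the measure $\sigma_i$ is carried by $X\cap\mathrm{supp}(\phi_i)\subseteq N_{x_i}$ and $\widehat{\sigma_i}$ vanishes a.e.\ off that set; by your Cauchy-transform criterion $\sigma_i\perp R(N_{x_i})$, so $\int f\,d\sigma_i=0$ because $f|_{N_{x_i}}\in R(N_{x_i})$. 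Finally $\sum_i\sigma_i=\mu$, since $\sum_i\bar\partial\phi_i\equiv0$ on a neighbourhood of $X$ while $\hat\mu\,dA$ is carried by $X$, so $\int f\,d\mu=\sum_i\int f\,d\sigma_i=0$. With $\phi_i\mu$ replaced by $\sigma_i$ your argument closes and becomes the textbook proof.
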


We shall require the following theorem.

\begin{theorem}\label{regularityunion}
Let $X$ be a compact plane set and let $E$ be a countable subset of $X$. Let $(X_\alpha)$ be a family of compact plane sets such that $R(X_\alpha)$ is regular for all $\alpha$ and $\bigcup_{\alpha}\tint_X{(X\cap X_\alpha)}\supseteq X\setminus E$. Then $R(X)$ is regular.
\end{theorem}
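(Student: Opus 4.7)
The plan is to show that every point of $X\setminus E$ is an $R$-point of $R(X)$; since $E$ is countable, the set of non-$R$-points of $R(X)$ will then be countable, and so by the contrapositive of Proposition~\ref{uncountablenonrpoint} we conclude that $R(X)$ is regular. So fix $x\in X\setminus E$ and an arbitrary $y\in X$ with $y\neq x$; I need to produce an $f\in J_x$ with $f(y)\neq 0$.

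By hypothesis $x\in\tint_X(X\cap X_\alpha)$ for some $\alpha$, so there is $r_0>0$ with $\ob{x}{r_0}\cap X\subseteq X\cap X_\alpha$; shrink $r_0$ if necessary so that also $y\notin\cb{x}{r_0}$, and pick radii $0<r_1<r_2<r_3<r_0$. Set $N:=\cb{x}{r_0}\cap X$, a closed $X$-neighbourhood of $x$ contained in $X_\alpha$. By Proposition~\ref{subset_property_inheritence}(c), $R(N)$ is regular and hence normal, so applying normality to the disjoint closed subsets $N\cap\cb{x}{r_1}$ and $N\setminus\ob{x}{r_2}$ of $N$ produces $h\in R(N)$ with $h=0$ on the first set and $h=1$ on the second. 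I would then define $f\colon X\to\C$ by $f|_N=h$ and $f=1$ on $X\setminus\ob{x}{r_2}$; the two prescriptions agree on the overlap $N\setminus\ob{x}{r_2}$ (where $h=1$) and the two closed sets cover $X$, so $f$ is well-defined and continuous.

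To conclude $f\in R(X)$ I would apply Proposition~\ref{localisation theorem}, noting that $X$ is covered by the two $X$-open sets $\ob{x}{r_3}\cap X$ and $X\setminus\cb{x}{r_3}$. For $z\in\ob{x}{r_3}\cap X$ the set $N$ itself is a closed $X$-neighbourhood of $z$, and $f|_N=h\in R(N)$; for $z\in X\setminus\cb{x}{r_3}$ a small closed $X$-disk $N_z$ about $z$ lies in $X\setminus\cb{x}{r_2}$, and on it $f$ is the constant function $1\in R(N_z)$. Finally $f$ vanishes on the $X$-neighbourhood $\ob{x}{r_1}\cap X$ of $x$, so $f\in J_x$, while $f(y)=1$ since $y\notin\cb{x}{r_0}$ places $y$ in the outer branch. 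Hence $J_x\not\subseteq M_y$, which is what was required. The only real obstacle is the bookkeeping with the four nested radii $r_1<r_2<r_3<r_0$: one has to keep them sufficiently separated so that the two pasted branches agree on their overlap and the two kinds of localising neighbourhoods cleanly cover $X$ while sitting inside regions on which $f$ is manifestly in the relevant $R$-algebra.
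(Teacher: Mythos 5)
Your argument is correct and follows essentially the same route as the paper: reduce to showing that each point of $X\setminus E$ is an $R$-point and invoke Proposition~\ref{uncountablenonrpoint}, then use normality of $R$ on a closed $X$-neighbourhood contained in some $X_\alpha$ to build a function vanishing near $x$ and equal to $1$ outside a slightly larger disk, extend by $1$, and apply Proposition~\ref{localisation theorem}. The only cosmetic point is that membership of $x$ in $\tint_X(X\cap X_\alpha)$ gives an \emph{open} ball inside $X_\alpha$, so $N$ should be taken to be a closed ball of strictly smaller radius (e.g.\ $\cb{x}{r_3}\cap X$) to guarantee $N\subseteq X_\alpha$; with your nested radii this is a one-word fix.
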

\begin{proof}
We first show that every point in $X\setminus E$ is an $R$-point for $R(X)$. Let $x\in X\setminus E$ and $y\in X$ with $x\neq y$. Then there exists $\alpha$ and $r>0$ such that $\cb{x}{r}\cap X\subseteq X_\alpha$. Let $\delta<r/3$ such that $\abs{x-y}>2\delta$. Let $F$ denote the complement of $X\cap\ob{x}{2\delta}$ in $\cb xr\cap X$. Since $X\cap\cb xr\subseteq X_\alpha$, $R(X\cap\cb xr)$ is regular by Proposition \ref{subset_property_inheritence} (and hence normal). Thus there exists a function $g\in R(X\cap \cb xr)$ with $g(z)=0$ for all $z\in X\cap \cb x\delta$ and $g(z)=1$ for all $z\in F$. Extend $g$ to a function $f\in C(X)$ by setting $f(z)=g(z)$ for all $z\in (\cb xr\cap X)$ and $f(z)=1$ for all $z\in X\setminus\cb xr$. Clearly $f$ satisfies the conditions of Proposition \ref{localisation theorem}, so $f\in R(X)$. By our choice of $\delta$, we have $x\in U$ and $y\in F$ so $f$ vanishes on a neighbourhood of $x$ and $f(y)=1$, so $x$ is an $R$-point for $R(X)$.

It follows that $R(X)$ has at most countably many non-$R$-points. So, by Proposition \ref{uncountablenonrpoint}, $R(X)$ is regular.
\end{proof}

Note that we do not assume that $X_\alpha\subseteq X$. However, replacing $X_\alpha$ by $X\cap X_\alpha$ does not alter the result.
We obtain the following corollaries.

\begin{corollary}\label{onepointregular}
Let $X$ be a compact plane set and $x_0\in X$. Let $(X_\alpha)$ be a family of compact plane sets such that $\bigcup_\alpha\tint_X(X\cap X_\alpha)=X\setminus\{x_0\}$ and $R(X_\alpha)$ is regular for all $\alpha$. Then $R(X)$ is regular.
\end{corollary}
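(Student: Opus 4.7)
This is an immediate specialisation of Theorem~\ref{regularityunion}. The plan is to set $E = \{x_0\}$, which is certainly a countable (in fact, singleton) subset of $X$. The hypothesis $\bigcup_\alpha\tint_X(X\cap X_\alpha)=X\setminus\{x_0\}$ then reads exactly as $\bigcup_\alpha\tint_X(X\cap X_\alpha)\supseteq X\setminus E$, and we are given that $R(X_\alpha)$ is regular for every $\alpha$. Thus the hypotheses of Theorem~\ref{regularityunion} are satisfied, and the conclusion that $R(X)$ is regular follows at once.

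There is no real obstacle here; the content of the corollary is entirely carried by Theorem~\ref{regularityunion}, and the corollary simply records the most useful special case in which the ``exceptional'' set of points not known to lie in the interior of some good piece consists of a single point. I would state the proof in one or two sentences and leave it at that.
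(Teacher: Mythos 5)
Your proof is correct and is exactly the intended argument: the paper states this corollary without proof precisely because it is the immediate special case $E=\{x_0\}$ of Theorem~\ref{regularityunion}. Nothing further is needed.
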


\begin{corollary}
Let $X_1,X_2$ be compact plane sets such that $X_1\cap X_2$ is countable. If $R(X_1)$ and $R(X_2)$ are regular then $R(X_1\cup X_2)$ is regular.
\end{corollary}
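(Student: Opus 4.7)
The plan is to deduce this directly from Theorem \ref{regularityunion} by taking the family to be $\{X_1,X_2\}$ and the countable exceptional set to be $E:=X_1\cap X_2$.

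Set $X:=X_1\cup X_2$. By hypothesis, $R(X_1)$ and $R(X_2)$ are regular and $E$ is countable, so the only thing to check is the covering condition
\[
\tint_X(X\cap X_1)\cup\tint_X(X\cap X_2)\supseteq X\setminus E.
\]
Let $x\in X\setminus E$. Then $x$ lies in exactly one of $X_1,X_2$; say $x\in X_1\setminus X_2$ (the other case is symmetric). Since $X_2$ is compact and hence closed in $\C$, there is an open neighbourhood $U$ of $x$ in $\C$ with $U\cap X_2=\emptyset$. Consequently $U\cap X=U\cap(X_1\cup X_2)=U\cap X_1\subseteq X_1$, so $U\cap X$ is an open neighbourhood of $x$ in $X$ that is contained in $X\cap X_1$. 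Therefore $x\in\tint_X(X\cap X_1)$.

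Since $E$ is countable and the covering condition above holds, Theorem \ref{regularityunion} applies and yields that $R(X_1\cup X_2)$ is regular. There is no real obstacle here; the only subtlety is recognising that one uses the $X$-relative interior (not the $\C$-interior) in Theorem \ref{regularityunion}, which is exactly what makes the argument work at boundary points of $X_1$ or $X_2$ that happen to lie in the complement of the other set.
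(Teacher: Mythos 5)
Your proof is correct and is exactly the intended argument: the paper states this as an immediate corollary of Theorem \ref{regularityunion} (with the family $\{X_1,X_2\}$ and $E=X_1\cap X_2$), and your verification of the covering condition via the closedness of the other set is the right way to fill in the one nontrivial detail.
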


\section{Classicalisation of an example of O'Farrell}
\label{applicationoflocal}
In this section we see an application of the results of Sections \ref{seclocal}-\ref{regularitysection}. In \cite{o1979regular}, O'Farrell modified the construction of McKissick \cite{mckissick1963nontrivial} to construct a Swiss cheese set $X$ such that $R(X)$ is regular and admits a non-degenerate bounded point derivation of infinite order (defined below). However, this Swiss cheese set is not necessarily classical.

\begin{definition}
Let $X$ be a compact plane set and let $x\in X$.
A {\em point derivation of order} $n\in \N$ (respectively, $\infty$) at $x$ (on $R(X)$) is a sequence $d_0,d_1,\dotsc$ of linear functionals with $d_0=\ec x,$ the evaluation character at $x,$ satisfying
\[
d_j(fg) = \sum\limits_{k=0}^jd_k(f)d_{j-k}(g)\qquad(f,g\in R(X)),
\]
for all $j=1,2,\dotsc,n$ (respectively, $j=1,2,\dotsc$).

Let $\mathbf{d}=(d_j)_{j=0}^n$ be a point derivation of order $n$ at $x$ (where we include the possibility that $n=\infty$ when $(d_j)$ is a point derivation of infinite order).
We say that $\mathbf{d}$ is \emph{bounded} if $d_j$ is a bounded linear functional for each $j$ with $j\leq n$ (respectively, all $j$).
We say that $\mathbf{d}$ is {\em non-degenerate} if $d_1\neq 0$.
\end{definition}

{We refer the reader to \cite{dales1977} (especially Lemma 2.1 and p.~170) for further details, related results and comments concerning non-degenerate higher point derivations.}

Following our general scheme of classicalisation, we construct a classical Swiss cheese set $X$ such that $R(X)$ is regular and admits a non-degenerate bounded point derivation of infinite order at one of the points of $X$.

The following proposition is an immediate corollary of Proposition \ref{subset_property_inheritence} and the result of McKissick \cite[Proposition~1.10]{feinheath2010} (see also \cite{mckissick1963nontrivial} and \cite{o1979regular}).

\begin{proposition}\label{regularannu}
Let $b_0=b_1\in\C,$ let $s_0>s_1>0,$ and let $\varepsilon>0$. Then there exists an annular Swiss cheese $A=((a_n,r_n))$ with $\ar(A)<\varepsilon,$ $a_j=b_j$ and $r_j=s_j$ for $j=0,1,$ and such that $R(X_A)$ is regular.
\end{proposition}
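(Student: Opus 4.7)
The plan is to assemble the desired annular Swiss cheese by simply feeding McKissick's construction into the annular template, then invoking the subset inheritance lemma (Proposition \ref{subset_property_inheritence}(c)) to transfer regularity of $R$ from the larger McKissick set down to the annular subset. Since the statement is explicitly flagged as an ``immediate corollary'' of these two inputs, there is no real obstacle to overcome; the work lies only in book-keeping the indexing so that the resulting abstract Swiss cheese genuinely fits the definition of an annular abstract Swiss cheese.

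First, I would apply McKissick's theorem in the form cited as \cite[Proposition 1.10]{feinheath2010} to the closed disk $\cb{b_0}{s_0}$ and the parameter $\varepsilon > 0$. This yields a Swiss cheese $E = (\cb{b_0}{s_0}, \mathcal{D})$ with $\sum_{D\in\mathcal{D}} r(D) < \varepsilon$ and such that $R(X_E)$ is regular. Enumerate $\mathcal{D}$ as a (finite or countable) sequence $(\ob{c_n}{t_n})_{n\geq 2}$ of non-degenerate open disks, padding out with degenerate pairs $(0,0)$ if necessary so that we obtain a sequence indexed by $\mathbb N\setminus\{0,1\}$.

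Next, I would define the candidate abstract Swiss cheese $A = ((a_n,r_n))_{n=0}^\infty$ by setting $(a_0,r_0) = (b_0,s_0)$, $(a_1,r_1) = (b_1,s_1)$, and $(a_n,r_n) = (c_n,t_n)$ for $n \geq 2$. Since $a_0 = a_1 = b_0$ and $s_0 > s_1 > 0$, the abstract Swiss cheese $A$ is annular in the sense of Section \ref{secannu}, and by construction
\[
\ar(A) \;=\; \sum_{n=2}^\infty r_n \;=\; \sum_{D\in\mathcal{D}} r(D) \;<\; \varepsilon,
\]
while the required equalities $a_j = b_j$ and $r_j = s_j$ for $j=0,1$ hold trivially.

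Finally, I would read off the inclusion
\[
X_A \;=\; \cb{b_0}{s_0}\setminus\Bigl(\ob{b_1}{s_1}\cup\bigcup_{n=2}^\infty \ob{a_n}{r_n}\Bigr) \;\subseteq\; \cb{b_0}{s_0}\setminus\bigcup_{D\in\mathcal{D}} D \;=\; X_E,
\]
and then apply Proposition \ref{subset_property_inheritence}(c) to the pair $Y = X_A \subseteq X = X_E$ to conclude that $R(X_A)$ is regular. This delivers all four required properties of $A$ simultaneously, so no further argument is needed.
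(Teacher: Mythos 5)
Your proposal is correct and is exactly the argument the paper has in mind: the paper gives no written proof, declaring the proposition an immediate corollary of McKissick's lemma \cite[Proposition~1.10]{feinheath2010} and the subset-inheritance result (Proposition \ref{subset_property_inheritence}(c)), and your construction --- placing the inner disk $\ob{b_1}{s_1}$ at index $1$ and the McKissick disks at indices $n\geq 2$, then passing regularity down from $X_E$ to $X_A\subseteq X_E$ --- is precisely the book-keeping that makes this immediate. (If one wants to be fastidious about $X_A$ being non-empty, one may first shrink $\varepsilon$ so that $\varepsilon^2<s_0^2-s_1^2$, but this costs nothing.)
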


We now use a sequence of lemmas to show that we can construct a classical annular Swiss cheese with the same properties as those in Proposition \ref{regularannu}.

\begin{lemma}\label{annuapproxlem}
Let $\lambda_0>\lambda_1>0$ and $\varepsilon,\eta>0$ be given and let $a\in\C$. There exists a classical$,$ annular Swiss cheese $B=((b_n,s_n))$ with $b_0=b_1=a$ such that $\ar(B)<\varepsilon,$ $\lambda_0\leq s_0\leq\lambda_0+\eta$ and $\lambda_1-\eta\leq s_1\leq\lambda_1$ such that $R(X_B)$ is regular.
\end{lemma}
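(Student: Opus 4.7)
The plan is to reduce the statement to Proposition \ref{regularannu} followed by the annular classicalisation theorem (Theorem \ref{annuclass}), with the starting outer and inner radii deliberately offset from $\lambda_0,\lambda_1$ by exactly enough slack to absorb the radius perturbation caused by classicalisation.

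First, by replacing $\eta$ with $\min(\eta,\lambda_1/2)$ if necessary, I would assume $0<\eta<\lambda_1$, so that $\lambda_1-\eta>0$; a smaller $\eta$ only strengthens the conclusion. Set $\varepsilon_0:=\min(\varepsilon,\eta/2)$. Apply Proposition \ref{regularannu} with centre $a$, outer radius $\lambda_0+\eta$, inner radius $\lambda_1-\eta$, and tolerance $\varepsilon_0$. This produces an annular Swiss cheese $A=((a_n,r_n))$ with $a_0=a_1=a$, $r_0=\lambda_0+\eta$, $r_1=\lambda_1-\eta$, $\ar(A)<\varepsilon_0$, and $R(X_A)$ regular. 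Then apply Lemma \ref{annularredfreeequiv} to replace $A$ by an annular Swiss cheese (which I again denote $A$) with $K_A$ and $X_A$ unchanged, $\mu(A)<\infty$, $(r_n)_{n\geq 2}$ non-increasing, and annular radius sum no larger than before. We still have $\ar(A)<\varepsilon_0\leq\eta/2$, and so
\[
\ad(A)=r_0-r_1-2\ar(A)>(\lambda_0-\lambda_1)+2\eta-\eta>0,
\]
which places $A$ in the standing setting of Section \ref{secannu}.

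Now invoke Theorem \ref{annuclass} for $A$ to obtain a classical annular Swiss cheese $B=((b_n,s_n))$ with $X_B\subseteq X_A$, $b_0=b_1=a$ (since $B\in\An$), $\ar(B)\leq\ar(A)$, and
\[
r_0-2\ar(A)\leq s_0\leq r_0,\qquad r_1\leq s_1\leq r_1+2\ar(A).
\]
Substituting $r_0=\lambda_0+\eta$, $r_1=\lambda_1-\eta$, and $\ar(A)<\eta/2$, these inequalities collapse to $\lambda_0<s_0\leq\lambda_0+\eta$ and $\lambda_1-\eta\leq s_1<\lambda_1$, which are stronger than what the lemma requires. Moreover $\ar(B)\leq\ar(A)<\varepsilon_0\leq\varepsilon$. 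Finally, since $X_B\subseteq X_A$ and $R(X_A)$ is regular, Proposition \ref{subset_property_inheritence}(c) yields that $R(X_B)$ is regular, completing the proof.

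There is no genuine obstacle: the only step requiring care is the bookkeeping that chooses the initial radii as $\lambda_0+\eta$ and $\lambda_1-\eta$, with $\varepsilon_0\leq\eta/2$, so that the classicalisation error of at most $2\ar(A)$ in each of $s_0,s_1$ is absorbed into the tolerance $\eta$, while the annular radius sum stays below $\varepsilon$. Regularity survives for free because it is inherited by compact subsets.
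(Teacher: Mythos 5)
Your proposal is correct and follows essentially the same route as the paper: start from Proposition \ref{regularannu} with radii $\lambda_0+\eta$ and $\lambda_1-\eta$ and radius-sum tolerance at most $\eta/2$, pass through Lemma \ref{annularredfreeequiv}, classicalise via Theorem \ref{annuclass}, and inherit regularity from Proposition \ref{subset_property_inheritence}. Your explicit check that $\ad(A)>0$ (needed to enter the standing hypotheses of Section \ref{secannu}) is a small point the paper leaves implicit, but otherwise the arguments coincide.
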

\begin{proof}
We may assume that $\eta<\lambda_1$ and $\varepsilon\leq\eta/2$. Let $A=((a_n,r_n))$ be an abstract Swiss cheese obtained from Proposition \ref{regularannu} with $a_0=a_1=a$, $r_0=\lambda_0+\eta$, $r_1=\lambda_1=\eta$ and $\ar(A)<\varepsilon$ and such that $R(X_A)$ is regular. By Lemma \ref{annularredfreeequiv}, we may assume that the sequence $(r_n)_{n=2}^\infty$ is non-increasing. Apply Theorem \ref{annuclass} to the abstract Swiss cheese $A$ to obtain a classical, annular Swiss cheese $B=((b_n,s_n))$ with
\[
b_0=b_1=a_0=a_1=a,\quad r_0-2\varepsilon\leq s_0\leq r_0,\quad\text{and}\quad r_1\leq s_1\leq r_1+2\varepsilon,
\]
such that $\ar(B)\leq\ar(A)$ and $X_B\subseteq X_A$. By Lemma \ref{subset_property_inheritence}, $R(X_B)$ is regular. Since $2\varepsilon\leq\eta$, we have $\lambda_0\leq s_0\leq \lambda_0+\eta$ and $\lambda_1-\eta\leq s_0\leq \lambda_1$. This completes the proof.
\end{proof}

By instead taking $r_0=\lambda_0$ and $r_1=\lambda_1$ in the proof of the previous lemma we see that we could also approximate the desired annulus with a smaller annulus, rather than a larger annulus as in Lemma \ref{annuapproxlem}.

In the next lemma, we see how to obtain a classical, annular Swiss cheese $A$ such that $R(X_B)$ is regular with one of the first two radii specified exactly while prescribing tight bounds on the other.

\begin{lemma}\label{onefixoneapproxannu}
Let $\lambda_0>\lambda_1>0$ and $\varepsilon,\eta>0$ be given and let $a\in\C$.
\begin{enumerate}
 \item There exists a classical$,$ annular Swiss cheese $B^{(1)}=((b_n^{(1)},s_n^{(1)}))$ with $s_0^{(1)}=\lambda_0,$ $\lambda_1-\eta\leq s_1^{(1)}\leq\lambda_1$ and $b_0^{(1)}=b_1^{(1)}=a$ such that $R(X_{B^{(1)}})$ is regular and $\ar(B^{(1)})<\varepsilon$.
 \item There exists a classical$,$ annular Swiss cheese $B^{(2)}=((b_n^{(2)},s_n^{(2)}))$ with $b_0^{(2)}=b_1^{(2)}=a$ such that $\ar(B^{(2)})<\varepsilon,$ $\lambda_0\leq s_0^{(2)}\leq\lambda_0+\eta$ and $s_1^{(2)}=\lambda_1$ and  such that $R(X_{B^{(2)}})$ is regular.
\end{enumerate}
\end{lemma}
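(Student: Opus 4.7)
The plan is to prove part (a) in detail and note that part (b) follows by a symmetric argument with the roles of the inner and outer boundaries interchanged. The idea is to apply Proposition~\ref{regularannu} to produce an (in general non-classical) annular Swiss cheese with very small annular radius sum, arranged so that its interior disks are uniformly separated from the boundary we wish to fix exactly; then Theorem~\ref{annuclass} can be applied, and the buffer between the disks and the distinguished boundary is wide enough that the classicalisation procedure cannot reach that boundary, so the corresponding radius is preserved exactly.

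For part (a), I will fix $\varepsilon_0>0$ with $\varepsilon_0<\min(\varepsilon,\eta/2)/4$ (and assume without loss of generality that $\eta<\lambda_1$). By Proposition~\ref{regularannu} there is an annular Swiss cheese $A=((a_n,r_n))$ with $a_0=a_1=a$, $r_0=\lambda_0$, $r_1=\lambda_1-\eta/2$, $\ar(A)<\varepsilon_0$ and $R(X_A)$ regular. A routine refinement of the McKissick--O'Farrell construction underlying Proposition~\ref{regularannu} allows me in addition to arrange, for each $n\geq2$, that $\cb{a_n}{r_n}\subseteq\overline{B(a,\lambda_0-2\varepsilon_0)}\setminus B(a,\lambda_1-\eta/2)$, so that each interior disk is separated from the outer boundary $\partial B(a,\lambda_0)$ by a buffer of width at least $2\varepsilon_0$. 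I then apply Theorem~\ref{annuclass} to $A$ to obtain a classical annular Swiss cheese $B^{(1)}=((b_n^{(1)},s_n^{(1)}))$ with $X_{B^{(1)}}\subseteq X_A$ and $\ar(B^{(1)})\leq\ar(A)<\varepsilon_0<\varepsilon$. The key buffer analysis is this: every disk appearing at any stage of the classicalisation is an iterated combination (via Lemma~\ref{combinedisks}) of original disks of $A$, so its centre lies in the convex hull of the corresponding original centres (hence inside $\overline{B(a,\lambda_0-2\varepsilon_0)}$) and its radius is bounded by the sum of the original radii, hence by $\ar(A)<\varepsilon_0$. Thus every such disk is contained in $\overline{B(a,\lambda_0-\varepsilon_0)}$, lies strictly inside $B(a,\lambda_0)$, and does not meet $\C\setminus B(a,\lambda_0)$. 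Consequently, in the pull-in step of the proof of Theorem~\ref{annuclass} (Lemma~\ref{annusuccstep} followed by Lemma~\ref{annupullin}) only the inner case can ever fire, so the outer radius is preserved exactly: $s_0^{(1)}=\lambda_0$. The inner radius can grow by at most $2\ar(A)<2\varepsilon_0$, so $s_1^{(1)}\in[\lambda_1-\eta/2,\lambda_1-\eta/2+2\varepsilon_0]\subseteq[\lambda_1-\eta,\lambda_1]$ as required. Regularity of $R(X_{B^{(1)}})$ then follows from $X_{B^{(1)}}\subseteq X_A$ and Proposition~\ref{subset_property_inheritence}.

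Part (b) is strictly symmetric: I apply Proposition~\ref{regularannu} with outer radius $\lambda_0+\eta/2$ and inner radius $\lambda_1$, and arrange that each interior disk satisfies $\cb{a_n}{r_n}\subseteq\overline{B(a,\lambda_0+\eta/2)}\setminus B(a,\lambda_1+2\varepsilon_0)$, so that each is separated from the inner boundary $\partial B(a,\lambda_1)$ by a buffer of width at least $2\varepsilon_0$. The same buffer analysis, now applied to the inner boundary, shows that during classicalisation no disk ever meets $\overline{B(a,\lambda_1)}$; therefore only the outer case of Lemma~\ref{annupullin} can fire, the inner radius is preserved ($s_1^{(2)}=\lambda_1$), and the outer radius shrinks by at most $2\ar(A)<2\varepsilon_0<\eta/2$, giving $s_0^{(2)}\in[\lambda_0+\eta/2-2\varepsilon_0,\lambda_0+\eta/2]\subseteq[\lambda_0,\lambda_0+\eta]$. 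The main technical obstacle I anticipate is verifying the buffered refinement of Proposition~\ref{regularannu}: this placement property is not contained in its statement as quoted, but it is a natural and straightforward refinement of the underlying construction, which can freely place its interior disks inside any prescribed compact sub-region of the target annulus.
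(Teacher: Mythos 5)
Your plan for part (a) hinges on a ``routine refinement'' of Proposition~\ref{regularannu} in which every deleted disk satisfies $\cb{a_n}{r_n}\subseteq\cb{a}{\lambda_0-2\ec{0}}$, i.e.\ all deleted disks avoid a fixed neighbourhood of the outer boundary circle. This refinement is not merely unproven --- it is impossible. If the deleted disks all lie in $\cb{a}{\lambda_0-2\ec{0}}$, then $X_A$ contains the set $\{z:\lambda_0-2\ec{0}<\abs{z-a}\leq\lambda_0\}$, which has non-empty interior; but a compact plane set $X$ with non-empty interior can never have $R(X)$ regular (take $x$ interior with $\cb{x}{r}\subseteq X$ and $E=X\setminus\ob{x}{r}$: any $f\in R(X)$ vanishing on $E$ vanishes on the circle $\abs{z-x}=r$ and is analytic on $\ob xr$, so $f(x)=0$ by the maximum modulus principle). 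Regularity of $R(X_A)$ forces the deleted disks to accumulate at \emph{every} point of the annulus, including the boundary you wish to protect, so no buffer of the kind you describe can exist. A secondary problem, even granting the buffer: Theorem~\ref{annuclass} is proved by an extremal/compactness argument over the set $\An$ of abstract Swiss cheeses partially above $A$, not by iterating Lemma~\ref{combinedisks}, so your claim that every disk of the classicalised cheese is an ``iterated combination'' of original disks (with centre in the convex hull of original centres) is not available without further argument.

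The paper avoids the issue entirely with a scaling trick. For (b): apply Lemma~\ref{annuapproxlem} with a small parameter $\gamma$ to get a \emph{classical} annular Swiss cheese $A=((a_n,r_n))$ with $\ar(A)<\gamma/2$, $\lambda_0\leq r_0\leq\lambda_0+\gamma$, $\lambda_1-\gamma\leq r_1\leq\lambda_1$ and $R(X_A)$ regular, then dilate about $a$ by the factor $\lambda_1/r_1\geq1$, setting $b_n^{(2)}=\lambda_1a_n/r_1$ and $s_n^{(2)}=\lambda_1r_n/r_1$. This makes the inner radius exactly $\lambda_1$, preserves classicality and regularity (similarities of the plane preserve both), and for $\gamma$ small enough the outer radius stays in $[\lambda_0,\lambda_0+\eta]$ and the annular radius sum below $\varepsilon$, since the dilation factor $\lambda_1/(\lambda_1-\gamma)\to1$ as $\gamma\to0$. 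Part (a) is handled the same way. If you want to fix one radius exactly, rescale an approximate solution rather than trying to steer the classicalisation away from a boundary.
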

\begin{proof}
We prove (b); the proof of (a) is similar but easier. We may assume, without loss of generality, that $a=0$.
Let $\gamma\in(0,\lambda_1)$ to be chosen later. Apply Lemma \ref{annuapproxlem} to obtain a classical, annular Swiss cheese $A=((a_n,r_n))$ such that $\ar(A)<\gamma/2$, $\lambda_0\leq r_0\leq\lambda_0+\gamma$ and $\lambda_1-\gamma\leq r_1\leq\lambda_1$ and such that $R(X_A)$ is regular.

For each $n\geq 0$ let $b_n^{(2)}:=\lambda_1 a_n/r_1$ and $s_n^{(2)}:=\lambda_1 r_n/r_1$. Then
\[
\sum\limits_{n=2}^\infty s_n^{(2)}\leq\frac{\lambda_1}{\lambda_1-\gamma}\sum\limits_{n=2}^\infty r_n<\frac{\lambda_1}{\lambda_1-\gamma}\frac\gamma2.
\]
Set $M_\gamma:=\lambda_1/(\lambda_1-\gamma)>1$. We have $s_1^{(2)}=\lambda_1$ and $s_0^{(2)}\geq\lambda_0$ so it remains to show that $s_0^{(2)}\leq\lambda_0+\eta$ provided that $\gamma$ is sufficiently small. We have
\[
\lambda_0\leq s_0^{(2)}=\frac{\lambda_1}{r_1}r_0\leq M_\gamma r_0\leq M_\gamma(\lambda_0+\gamma).
\]
Since $M_\gamma\to 1$ as $\gamma\to 0$, if $\gamma$ is small enough then we have $\lambda_0\leq s_0^{(1)}\leq \lambda_0+\eta$ and $M_\gamma\gamma/2<\varepsilon$. Clearly $R(X_{B^{(2)}})$ is regular since $R(X_A)$ is regular. This completes the proof of the (b).
\end{proof}

Our final lemma shows that we can obtain this type of annular Swiss cheese with $s_0$ and $s_1$ precisely prescribed.

\begin{lemma}\label{exactPannu}
Let $\lambda_0>\lambda_1>0$ and $\varepsilon>0$ be given and let $a\in\C$. There exists a classical$,$ annular Swiss cheese $B=((b_n,s_n))$ with $b_0=b_1=a,$ $s_0=\lambda_0,$ $s_1=\lambda_1,$ $\ar(B)<\varepsilon$ and such that $R(X_B)$ is regular.
\end{lemma}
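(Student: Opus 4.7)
My plan combines the McKissick-style construction (Proposition \ref{regularannu}) with a controlled classicalization via Corollary \ref{controlledonecorollary}. First I would apply Proposition \ref{regularannu} with parameters $a$, $\lambda_0$, $\lambda_1$ and a sufficiently small $\varepsilon' < \varepsilon$ (to be specified), obtaining an annular Swiss cheese $A = ((a_n, r_n))$ with $a_0 = a_1 = a$, $r_0 = \lambda_0$, $r_1 = \lambda_1$, $\ar(A) < \varepsilon'$, and $R(X_A)$ regular. By Lemma \ref{annularredfreeequiv}, I may assume $A$ is redundancy-free. The key intermediate step is to arrange, by an adaptation of the underlying McKissick construction, that every removed disk $\cb{a_n}{r_n}$ for $n \geq 2$ lies strictly inside a compact sub-annulus $K := \cb{a}{\lambda_0 - \gamma} \setminus \ob{a}{\lambda_1 + \gamma}$ for some $\gamma > 2\varepsilon'$, so that $E(A) \subseteq K$. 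Regularity of $R(X_A)$ on the full annulus $\cb{a}{\lambda_0} \setminus \ob{a}{\lambda_1}$ (including the thin annular regions near the inner and outer boundaries that are not covered by the strictly-interior disks) can be recovered via Theorem \ref{regularityunion} by covering these boundary regions with additional regular Swiss cheese subsets obtained from further applications of Proposition \ref{regularannu} in slightly overlapping annular slabs.

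Next I would choose $M \in (2\varepsilon', \gamma)$, set $U := U(K, M)$, and define the controlling pair $\mathcal C := (K, U)$. Since $M < \gamma$, I have $U \subseteq \ob{a}{\lambda_0}$ and $U \cap \cb{a}{\lambda_1} = \emptyset$. The hypotheses of Corollary \ref{controlledonecorollary} (in the form derived from Theorem \ref{seqlocclassthm}) are satisfied: $E(A) \subseteq K$ and $\rho_U(A) \leq \ar(A) < \varepsilon' < M/2$. Applying Corollary \ref{controlledonecorollary}, I obtain a classical abstract Swiss cheese $B \in \LM C$ with $X_B \setminus U = X_A \setminus U$. By Definition \ref{locally_partially_above}(b), $\cb{b_0}{s_0} = \cb{a}{\lambda_0}$; and since $\cb{a}{\lambda_1} \cap U = \emptyset$, Definition \ref{locally_partially_above}(c) ensures that the inner disk $\cb{a}{\lambda_1}$ itself appears among the disks of $B$. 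After reindexing to place this disk at index $1$, $B$ is a classical annular Swiss cheese with $b_0 = b_1 = a$, $s_0 = \lambda_0$, and $s_1 = \lambda_1$.

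Finally, $\ar(B) \leq \ar(A) < \varepsilon' < \varepsilon$, because $B \in \LM C \subseteq \noninc(\mu(A), \rho(A))$ and the preservation of the inner disk gives $\ar(B) = \rho(B) - s_1 \leq \rho(A) - \lambda_1 = \ar(A)$; and $R(X_B)$ is regular by Proposition \ref{subset_property_inheritence} because $X_B \subseteq X_A$. The main technical obstacle is the very first arrangement: ensuring that Proposition \ref{regularannu} can be delivered with all McKissick disks strictly inside $K$ while still yielding $R(X_A)$ regular on the full annulus. This is where the bookkeeping lies, and it will likely require combining several instances of Proposition \ref{regularannu} (on sub-annuli whose union, together with the original strictly-interior construction, covers $\cb{a}{\lambda_0} \setminus \ob{a}{\lambda_1}$ up to a countable set of seam circles) and patching the regularity together via Theorem \ref{regularityunion}.
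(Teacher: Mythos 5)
The second half of your plan (choosing a controlling pair $(K,U)$ with $K$ a compact sub-annulus bounded away from both boundary circles, checking $\rho_U(A)<M/2$ and $E(A)\subseteq K$, applying Corollary \ref{controlledonecorollary}, and then reading off $s_0=\lambda_0$, $s_1=\lambda_1$ from Definition \ref{locally_partially_above}(b),(c)) is exactly what the paper does, and your verification of that half is sound. The gap is in your ``key intermediate step''. An annular Swiss cheese $A$ with $r_0=\lambda_0$, $r_1=\lambda_1$ in which \emph{every} deleted disk lies strictly inside $K=\cb{a}{\lambda_0-\gamma}\setminus\ob{a}{\lambda_1+\gamma}$ cannot have $R(X_A)$ regular: $X_A$ would then contain the solid annuli $\cb{a}{\lambda_0}\setminus\ob{a}{\lambda_0-\gamma}$ and $\cb{a}{\lambda_1+\gamma}\setminus\ob{a}{\lambda_1}$, every $f\in R(X_A)$ is holomorphic on their (connected, nonempty) interiors, and the identity theorem then forces $J_x\subseteq M_y$ for distinct $x,y$ in such an interior. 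For the same reason Theorem \ref{regularityunion} cannot ``recover'' regularity here: any $X_\alpha$ whose relative interior covers an interior point of an undeleted boundary annulus must itself contain an open disk, so $R(X_\alpha)$ is not regular and the hypothesis of that theorem is unsatisfiable. Your fallback --- covering the whole annulus by several instances of Proposition \ref{regularannu} on overlapping slabs --- restores regularity but destroys the containment $E(A)\subseteq K$: the raw McKissick disks of the slabs adjacent to the circles $\abs{z-a}=\lambda_0$ and $\abs{z-a}=\lambda_1$ may protrude past the outer circle or meet $\cb{a}{\lambda_1}$, and those error points cannot be repaired by a controlled classicalisation whose controlling set must stay inside $\ob{a}{\lambda_0}$ and away from the inner disk (otherwise $s_0$ and $s_1$ would move). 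Also note that ``up to a countable set of seam circles'' does not fit Theorem \ref{regularityunion}, whose exceptional set must be a countable set of \emph{points}.

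What is missing is precisely the content of Lemmas \ref{annuapproxlem} and \ref{onefixoneapproxannu}, which rest on the annular classicalisation theorem (Theorem \ref{annuclass}) together with a rescaling trick. The paper builds $A$ as the union of just two \emph{already classical} annular cheeses: $A^{(1)}$ with outer radius exactly $\lambda_0$ and inner radius close to $\kappa=(\lambda_0+\lambda_1)/2$, and $A^{(2)}$ with inner radius exactly $\lambda_1$ and outer radius close to $\kappa$. Because each piece is classical, all of its small closed disks lie in the interior of its own annulus and are pairwise disjoint, so the only error in the union comes from disks of $A^{(1)}$ meeting disks of $A^{(2)}$, which forces $E(A)$ into a thin annulus around $\abs{z-a}=\kappa$ --- this is how the hypothesis $E(A)\subseteq K$ is legitimately achieved while Theorem \ref{regularityunion} (applied to the two overlapping annuli, whose relative interiors do cover $X_A$) gives regularity. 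Without some device of this kind for pre-classicalising the boundary regions while pinning $s_0$ and $s_1$ exactly, your argument does not go through.
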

\begin{proof}
Let $\kappa=(\lambda_0+\lambda_1)/2$ and let $\eta>0$. By Lemma \ref{onefixoneapproxannu}(a) there exists a classical, annular Swiss cheese $A^{(1)}=((a_n^{(1)},r_n^{(1)}))$ with $a_0^{(1)}=a_1^{(1)}=a$, $\ar(A^{(1)})<\eta/16$, $r_0^{(1)}=\lambda_0$ and $\kappa-\eta/4\leq r_1^{(1)}\leq\kappa-\eta/8$ and such that $R(X_{A^{(1)}})$ is regular. By Lemma \ref{onefixoneapproxannu}(b) there exists a classical, annular Swiss cheese $A^{(2)}=((a_n^{(2)},r_n^{(2)}))$ with $a_0^{(2)}=a_1^{(2)}=a$, $\ar(A^{(2)})<\eta/16$, $r_1^{(2)}=\lambda_1$ and $\kappa\leq r_0^{(2)}\leq\kappa+\eta/4$ and such that $R(X_{A^{(2)}})$ is regular.

Let $(a_n^{(3)})_{n\geq 2}$ be a sequence containing all elements from the sequences $(a_n^{(1)})_{n\geq 2},$ and $(a_n^{(2)})_{n\geq 2}$ exactly once and let $(r_n^{(3)})_{n\geq 2}$ be the corresponding sequence containing all elements from the sequences $(r_n^{(1)})_{n\geq 2},$ and $(r_n^{(2)})_{n\geq 2}$ exactly once. Let $a_0^{(3)}:=a,$ $a_1^{(3)}:=a$ and $r_0^{(3)}:=\lambda_0,$ $r_1^{(3)}:=\lambda_1$ and let $A^{(3)}=((a_n^{(3)},r_n^{(3)}))$ be the corresponding annular Swiss cheese. Then
\[
\ar(A^{(3)})=\sum\limits_{n=2}^\infty r_n^{(3)}<\frac\eta{16}+\frac\eta{16}= \frac{\eta}{8}.
\]
Let $X:=X_{A^{(3)}}$, then we can easily check that $X=\tint_X X_{A^{(1)}}\cup\tint_X X_{A^{(2)}}$ so, by Theorem \ref{regularityunion}, $R(X)$ is regular.

Choose $\eta>0$ small enough so that we have $\eta<(\lambda_0-\lambda_1)/4$, $\eta/2<\varepsilon$ and $\eta<\lambda_1$. Let $K:=\{z\in\C:\kappa-\eta/4\leq\abs{z}\leq\kappa+\eta/4\}$ and let $M:=\eta/4$.
Let $A=((a_n,r_n))\in\noninc$ be obtained by applying Lemma \ref{annularredfreeequiv}. Then, for each open $U\subseteq\C,$ $\rho_U(A)\leq\rho_U(A^{(3)})$. It is now easy to see that $A,K$ and $M$ satisfy the conditions of Corollary \ref{controlledonecorollary}. Note that $U:=U(K,M)$ has $U\cap \cb{a_1}{r_1}=\emptyset$. Note that $X_A=X=X_{A^{(3)}}$.

Let $B=((b_n,s_n))$ be the classical abstract Swiss cheese obtained by applying Corollary \ref{controlledonecorollary} to $A,K$ and $M$. Then $B\in\LM C$, where $\mathcal C$ is the controlling pair $(K,U)$. Thus, by Lemma \ref{diskscorrespondence}, there exists $\ell\in S_B$ such that $\ob{b_\ell}{s_\ell}=\ob{a_1}{r_1}$. Since the sequence $(s_n)_{n\geq 1}$ is non-increasing and $\ar(A)<r_1$, it follows that $\ell=1$. It follows that $B$ is annular and has $s_0=\lambda_0$, $s_1=\lambda_1$ and $\ar(B)<\varepsilon$. Since $R(X_{A})$ is regular, by Proposition \ref{subset_property_inheritence}, $R(X_B)$ is regular. This completes the proof.
\end{proof}

We are now ready to construct a classical Swiss cheese set $X$ such that $R(X)$ is regular and admits a non-degenerate bounded point derivation of infinite order.

\begin{theorem}\label{zerocheese}
Let $\varepsilon>0$. Then there exists a classical abstract Swiss cheese $B=((b_n,s_n))$ with $0\in X_B$ and $\rho(B)<\varepsilon$ and such that $R(X_B)$ is regular and admits a non-degenerate bounded point derivation of infinite order at $0$.
\end{theorem}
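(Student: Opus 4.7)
The plan is to mimic the geometrically shrinking annular construction of O'Farrell~\cite{o1979regular} using Lemma~\ref{exactPannu} to make each annular piece classical with regular $R$, add auxiliary ``bridge'' annular Swiss cheeses that straddle each common boundary circle in order to recover regularity there, combine all the resulting holes into one abstract Swiss cheese $\hat A$, and then apply controlled classicalisation via Theorem~\ref{seqlocclassthm} to obtain the required classical~$B$.

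Fix $\lambda_n=2^{-n}$ for $n\geq 0$ and a positive sequence $(\varepsilon_n)$ decreasing so rapidly that $\sum_{n\geq 0}\varepsilon_n<\varepsilon/4$ and $\sum_{n\geq 0}\varepsilon_n/\lambda_{n+1}^{k+1}<\infty$ for every $k\in\N$ (for instance $\varepsilon_n=2^{-n^2}$). For each $n\geq 0$, Lemma~\ref{exactPannu} produces a classical annular Swiss cheese $A^{(n)}$ centred at $0$ with outer radius $\lambda_n$, inner radius $\lambda_{n+1}$, $\ar(A^{(n)})<\varepsilon_n$ and $R(X_{A^{(n)}})$ regular. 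Choose $\tau_n\in(\lambda_n,\lambda_{n-1})$ and $\mu_n\in(\lambda_{n+1},\lambda_n)$ for each $n\geq 1$ sufficiently close to $\lambda_n$ that $\mu_n>\tau_{n+1}$ (e.g.\ $\tau_n=\lambda_n+(\lambda_{n-1}-\lambda_n)/4$, $\mu_n=\lambda_n-(\lambda_n-\lambda_{n+1})/4$), and apply Lemma~\ref{exactPannu} once more to obtain a classical annular Swiss cheese $\tilde A^{(n)}$ centred at $0$ with outer radius $\tau_n$, inner radius $\mu_n$, $\ar(\tilde A^{(n)})<\varepsilon_n$ and $R(X_{\tilde A^{(n)}})$ regular; this ``bridge'' straddles $|z|=\lambda_n$. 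Let $\hat A$ be the abstract Swiss cheese with outer disk $\cb{0}{\lambda_0}$ whose open disks are all the open disks of index $\geq 2$ from all the $A^{(n)}$ and $\tilde A^{(n)}$; after applying Lemma~\ref{nonredundantcheese} we may assume $\hat A\in\noninc$ is redundancy-free, and since no hole reaches the origin we have $0\in X_{\hat A}$.

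The regularity of $R(X_{\hat A})$ follows from Theorem~\ref{regularityunion} applied with $E=\{0\}$ and the countable family $(X_{A^{(n)}})_{n\geq 0}\cup(X_{\tilde A^{(n)}})_{n\geq 1}$: if $|x|\neq\lambda_m$ for every $m\geq 1$, then $|x|\in(\lambda_{n+1},\lambda_n)$ for some $n$ and a small enough neighbourhood of $x$ in $X_{\hat A}$ lies in the open annulus of $A^{(n)}$, hence in $X_{A^{(n)}}$ because $X_{\hat A}$ contains all the holes of $A^{(n)}$; if $|x|=\lambda_m$ the same reasoning applies with $\tilde A^{(m)}$ since $\lambda_m\in(\mu_m,\tau_m)$. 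All disk collisions in $\hat A$ occur between holes of some $\tilde A^{(n)}$ and holes of the adjacent $A^{(n-1)}$ or $A^{(n)}$, so the error set $E(\hat A)$ is contained in $F(\mathcal C):=\bigcup_{n\geq 1}K_n$ with $K_n:=\{z:\mu_n\leq|z|\leq\tau_n\}$. Picking $M_n\in(6\varepsilon_n,(\mu_n-\tau_{n+1})/2)$ makes the open sets $U_n:=U(K_n,M_n)$ pairwise disjoint and contained in $\ob{0}{\lambda_0}$, and gives $\rho_{U_n}(\hat A)\leq\ar(\tilde A^{(n)})+\ar(A^{(n-1)})+\ar(A^{(n)})<3\varepsilon_n<M_n/2$, so Theorem~\ref{seqlocclassthm} produces a classical abstract Swiss cheese $B$ with $X_B\subseteq X_{\hat A}$, $\cb{b_0}{s_0}=\cb{0}{\lambda_0}$ and $X_B\setminus V(\mathcal C)=X_{\hat A}\setminus V(\mathcal C)$. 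Since $0\notin V(\mathcal C)$ this gives $0\in X_B$, $\rho(B)\leq\rho(\hat A)<2\sum_n\varepsilon_n<\varepsilon$, and $R(X_B)$ is regular by Proposition~\ref{subset_property_inheritence}(c).

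The main obstacle, in my view, is to verify that a non-degenerate bounded point derivation of infinite order at $0$ survives in $R(X_B)$. I would use the standard sufficient condition that $\sum_{j\geq 1}s_j/\dist(0,\cb{b_j}{s_j})^{k+1}<\infty$ for each $k\in\N$ yields a bounded $k$-th-order point derivation of the natural Cauchy-transform form on $R(X_B)$ at $0$. The open disks of $\hat A$ lying at scale $\lambda_n$ (i.e.\ whose closed disks meet the annulus $\{z:\lambda_{n+1}\leq|z|\leq\lambda_{n-1}\}$) have total radius $O(\varepsilon_n)$ and lie at distance from $0$ comparable to $\lambda_{n+1}$; controlled classicalisation inside each $U_n$ only replaces overlapping clusters with smaller disks and therefore does not increase the relevant local sums, so the same scale-$n$ bound $O(\varepsilon_n)/\lambda_{n+1}^{k+1}$ is valid for $B$, and the total series converges for every $k$ by our choice of $(\varepsilon_n)$. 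Non-degeneracy follows because the resulting $d_1$ coincides with $f\mapsto f'(0)$ on rational functions analytic at $0$, so $d_1$ applied to the polynomial $z$ equals $1\neq 0$.
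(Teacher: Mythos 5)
Your proposal is correct and follows the same overall architecture as the paper's proof: build classical annular pieces centred at $0$ via Lemma \ref{exactPannu} with rapidly decreasing radius sums, assemble their holes into a single abstract Swiss cheese, get regularity from Theorem \ref{regularityunion} (the paper uses Corollary \ref{onepointregular}), classicalise with Theorem \ref{seqlocclassthm} using thin annular sets $K_n$ around the junction circles, and verify a Hallstrom-type summability condition at $0$. The one genuine structural difference is at the junctions $\abs{z}=2^{-n}$: the paper makes consecutive annuli overlap slightly (outer radius $(33/32)2^{1-m}$ against the previous inner radius $2^{1-m}$), so every point other than $0$ already lies in the relative interior of one main piece, whereas you keep the main annuli exactly adjacent and insert separate ``bridge'' annuli straddling each junction. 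Both devices exist only to feed Theorem \ref{regularityunion}; yours costs an extra countable family but makes it slightly cleaner to see that all disk collisions are confined to your $K_n$'s. (Minor point: your case analysis for regularity omits $\abs{x}=\lambda_0$, which is handled the same way.) For the derivation, the paper invokes Hallstrom's theorem in the form $\sum_m m^m\rho_{E_m}(B)<\infty$ over dyadic annuli $E_m$, while you use the equivalent direct Cauchy-transform criterion $\sum_j s_j/\dist(0,\cb{b_j}{s_j})^{k+1}<\infty$ for each $k$; in either formulation the step you flag as the main obstacle --- transferring the local radius-sum bounds from $\hat A$ to $B$ --- is exactly what Lemma \ref{diskscorrespondence} (disks off $V(\mathcal C)$ are unchanged, in bijection with those of $\hat A$) together with Definition \ref{locally_partially_above}(a) ($\rho_{U_n}(B)\le\rho_{U_n}(\hat A)$) and the containment of every disk meeting $K_n$ inside $U_n$ delivers, so your sketch closes.
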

\begin{proof}
We may assume that $\varepsilon<2^{-5}$. For each $n\in\N$, let $\gamma_n=(2n)^{-n}\varepsilon$. Note that $\sum_{n=1}^\infty \gamma_n<\varepsilon$. Let $A^{(1)}=((a_n^{(1)},r_n^{(1)}))$ be a classical, annular Swiss cheese, given by Lemma \ref{exactPannu}, with $a_0^{(1)}=0$, $\ar(A^{(1)})<\gamma_12^{-3}$, $r_0^{(1)}=1$ and $r_1^{(1)}=2^{-1}$ such that $R(X_1)$ is regular, where $X_1:=X_{A^{(1)}}$. For each $m\geq 2$ let $A^{(m)}=((a_n^{(m)},r_n^{(m)}))$ be a classical, annular Swiss cheese, given by Lemma \ref{exactPannu}, with $a_0^{(m)}=0$, $\ar(A^{(m)})<\gamma_m2^{-m-2}$, $r_0^{(m)}=(33/32)2^{1-m}$ and $r_1^{(m)}=2^{-m}$ such that $R(X_m)$ is regular, where $X_m:=X_{A^{(m)}}$. Note that, for each $m\in\N$, since $A^{(m)}$ is annular we have $a_1^{(m)}=0$.

For each $m\in\N$ we have
\begin{equation}\label{radsumeq}
\ar(A^{(m)})+\ar(A^{(m+1)})\leq\frac{\gamma_m}{2^{m+2}}+\frac{\gamma_{m+1}}{2^{m+3}} <\frac{3}{2}\frac{\gamma_m}{2^{m+2}}.
\end{equation}

The set $\{(a_n^{(m)},r_n^{(m)}):m,n\in\N,n\geq 2\}$ is countably infinite so we may enumerate it as a sequence of pairs $(a_n,r_n)$, so that each pair occurs exactly once. Let $a_0:=0$ and $r_0:=1$ and let $A=((a_n,r_n))$ be the resulting abstract Swiss cheese. It is clear that $0\in X_{A}$. It is easy to check that
\[
\bigcup_{m=1}^\infty\tint_{X_A}X_m=X_A\setminus\{0\}
\]
so that $R(X_A)$ is regular by Corollary \ref{onepointregular}.

Using the notation for closed annuli from Section \ref{secannu}, for each $m\in\N$, let $W_m=K_{A^{(m)}}\cup K_{A^{{(m+1)}}}$. Then, by \eqref{radsumeq}, we see that
\begin{equation}\label{Aproperty1}
\rho_{W_m}(A)<(3/2)\gamma_m 2^{-m-2}.
\end{equation}
We also have
\begin{equation}\label{Aproperty2}
\rho(A)< \sum\limits_{m=1}^\infty2^{-m-2}\gamma_m<\varepsilon.
\end{equation}
By an application of Lemma \ref{nonredundantcheese}, we may assume that $A$ is redundancy-free and $A\in\mathcal N$ while preserving $X_A$, the regularity of $R(X_A)$ and the inequalities \eqref{Aproperty1} and \eqref{Aproperty2}.

For each $m\in\N$, let $K_m:=\{z\in\C:(15/16)2^{-m}\leq\abs{z}\leq(17/16)2^{-m}\}$ and $M_m=3\gamma_m/2^{m+2}$ so that, since $\gamma_m<1$, we have
\begin{equation}\label{Uplus2sum}
2^{-m-4}+M_m<2^{-m-4}(1+3\cdot 2^{-m})\leq(5/32)2^{-m}.
\end{equation}
For each $m\in\N$, define $U_m:=U(K_m,M_m)$ as in the statement of Theorem \ref{seqlocclassthm}, and let $\mathcal C$ be the controlling collection $((K_n,U_n))_{n\in\N}$. By \eqref{Uplus2sum}
\[\label{Uinannulus}
U_m\subseteq\bigg\{z\in\C:\frac{27}{32}2^{-m}\leq\abs{z}\leq\frac{37}{32}2^{-m}\bigg\}
\]
and hence $U_m\cap U_{n}=\emptyset$ for all $m,n\in\N$ with $m\neq n$. Clearly $E(A)\subseteq F(\mathcal C)$. Since $U_m\subseteq W_m$, for each $m\in\N$, we see that
\[
\rho_{U_m}(A)\leq\rho_{W_m}(A)<\frac{3}{2}\frac{\gamma_m}{2^{m+2}}
\]
This shows that the sequences $(K_n)_{n\geq 1}$ and $(M_n)_{n\geq 1}$ satisfy the conditions of Theorem \ref{seqlocclassthm}.

Applying Theorem \ref{seqlocclassthm} to $A$, $(K_n)_{n\geq 1}$ and $(M_n)_{n\geq 1}$, there exists an abstract Swiss cheese $B=((b_n,s_n))\in\LM C$ with $b_0=0$, $s_0=1$, $X_B\subseteq X_{A}$ and $\rho(B)<\varepsilon$ such that $B$ is classical and $X_B\setminus V(\mathcal C)=X_A\setminus V(\mathcal C)$; in particular $0\in X_B$. Also, we have $\rho_{U_m}(B)\leq\rho_{U_m}(A)$ for each $m\in\N$. By Proposition \ref{subset_property_inheritence}, $R(X_B)$ is regular.

For each $m\in\No,$ let $E_m:=\{z\in\C:(3/2)2^{-m-1}\leq\abs{z}\leq (3/2)2^{-m}\}$, then $U_m\subseteq E_{m}$ and $U_j\cap E_{m}=\emptyset$ for all $j\neq m$.
For each $m\in\N$, let $I_A(m)$ denote the set of all $k\in H_{A}(E_m)\setminus H_A(V(\mathcal C))$ and let $I_B(m)$ denote the set of all $k\in H_B(E_m)\setminus H_B(V(\mathcal C))$. Note that, $\rho_{E_m}(A)\leq\rho_{W_m}(A)<(3/2)\gamma_m2^{-m-2}$. Since $\varepsilon<2^{-5}$, it follows that, if $k\in H_{U_m}(A)$, we have
\[
\cb{a_k}{r_k}\subseteq\bigg\{z\in\C:\frac342^{-m}<\abs{z}<\frac542^{-m}\bigg\} \subseteq\tint{E_m}
\]
Hence $H_A(E_0)=I_A(0)$ and $H_B(E_0)=I_B(0)$ and, for all $m\geq 1$, we have $I_A(m)=H_A(E_m)\setminus H_A(U_{m})$ and $I_B(m)=H_B(E_m)\setminus H_B(U_{m})$. Now since $B\in\LM C$ we see that $\sum_{n\in I_B(m)}s_n=\sum_{n\in I_A(m)}r_n$, for all $m\in\N$, by Lemma \ref{diskscorrespondence}.
Hence $\rho_{E_0}(B)=\rho_{E_0}(A)$ and for $m\geq 1$ we have
\[
\rho_{E_m}(B)=\sum_{n\in I_B(m)}s_n+\rho_{U_{m}}(B)\leq\sum_{n\in I_A(m)}r_n+\rho_{U_{m}}(A)=\rho_{E_m}(A).
\]
Since $\rho_{E_m}(A)\leq\rho_{W_m}(A)<\gamma_m$, for each $m\in\N$, it now follows that
\[
\sum\limits_{m=1}^\infty m^m\rho_{E_m}(B)\leq\sum\limits_{m=1}^\infty m^m\frac{\varepsilon}{2^mm^m}=\varepsilon<\infty.
\]
Since each disk meets at most two of the $E_m$, $R(X_B)$ admits a non-degenerate bounded point derivation of infinite order at $0$ by Hallstrom's theorem \cite{hallstrom1969} (see also \cite{o1979regular}).
\end{proof}

We raise the following open question related to regularity and bounded point derivations.

\begin{question}
Let $X$ be a compact plane set such that $R(X)$ has no non-zero bounded point derivations. Is $R(X)$ necessarily regular$?$
\end{question}

We would like to thank the referee for helpful comments.



\noindent
School of Mathematical Sciences, University of Nottingham\\
University Park, Nottingham, NG7 2RD, UK\\
E-mail: joel.feinstein@nottingham.ac.uk

\sskip
\noindent
School of Mathematical Sciences, University of Nottingham\\
University Park, Nottingham, NG7 2RD, UK\\
E-mail: pmxsm9@nottingham.ac.uk

\sskip
\noindent
School of Mathematical Sciences, University of Nottingham\\
University Park, Nottingham, NG7 2RD, UK\\
E-mail: pmxhy1@nottingham.ac.uk
\end{document}